\newtheorem{theorem}{Theorem}[section]
\newtheorem{lemma}[theorem]{Lemma}
\newtheorem{proposition}[theorem]{Proposition}
\newtheorem{corollary}[theorem]{Corollary}
\newtheorem{definition}{Definition}[section]
\newtheorem{example}{Example}[section]
\newtheorem{remark}{Remark}[section]
\newcommand{\De}{\mathsf{De}}
\newcommand{\SRL}{\mathsf{SRL}}
\newcommand{\KSRL}{\mathsf{KSRL}}
\newcommand{\SRLs}{\mathsf{SRL}^*}
\newcommand{\Hs}{\mathsf{Hey}^*}
\newcommand{\NA}{\mathsf{NA}}
\newcommand{\SNA}{\mathsf{SNA}}
\newcommand{\ce}{\mathsf{c}}
\newcommand{\CK}{\mathsf{(CK)}}
\newcommand{\CC}{\mathsf{(C)}}
\newcommand{\SNAc}{\mathsf{SNA}^{c}}
\newcommand{\C}{\mathsf{C}}
\newcommand{\V}{\mathrm{V}}
\newcommand{\K}{\mathrm{K}}
\newcommand{\Con}{\mathrm{Con}}
\newcommand{\co}{\theta}
\newcommand{\ra}{\rightarrow}
\newcommand{\Ra}{\Rightarrow}
\newcommand{\we}{\wedge}
\newcommand{\sq}{\square}
\newcommand{\s}{\sim}
\newcommand{\oIF}{\mathrm{IF}^{o}}
\newcommand{\oX}{\mathrm{X}^{o}}
\title{Subresiduated Nelson Algebras}
\author{Noem\'i Lubomirsky, Paula Mench\'on, Hern\'an San Mart\'in}
\date{}
\begin{document}

\maketitle

\begin{abstract}
In this paper we generalize the well known relation between Heyting algebras and Nelson
algebras in the framework of subresiduated lattices. In order to make it possible,
we introduce the variety of subresiduated Nelson algebras. The
main tool for its study is the construction provided by Vakarelov. Using it, we characterize the
lattice of congruences of a subresiduated Nelson algebra through some of its implicative filters.
We use this characterization to describe simple and subdirectly irreducible algebras, as well as principal congruences.
Moreover, we prove that the variety of subresiduated Nelson algebras has
equationally definable principal congruences and also the congruence extension property.
Additionally, we present an equational base for the variety generated by the totally ordered subresiduated
Nelson algebras. Finally, we show that there exists an equivalence between the algebraic category
of subresiduated lattices and the algebraic category of centedred subresiduated Nelson algebras. 
\smallskip
  
  \noindent Keywords: Subresiduated lattices, Nelson algebras, twist construction, Kleene algebras.
\end{abstract}

\section{Introduction}

In this paper we study the convergence of ideas arising from different varieties of algebras
related to intuitionistic logics: Heyting algebras, subresiduated lattices and Nelson algebras.

Subresiduated lattices, which are a generalization of Heyting algebras, were introduced during the decade of 1970
by Epstein and Horn \cite{EH} as an algebraic counterpart of some logics with
strong implication previously studied by Lewy and Hacking \cite{H}. These logics are examples of 
subintuitionistic logics, i.e., logics in the language of intuitionistic logic that are defined semantically 
by using Kripke models, in the same way as intuitionistic logic is defined but without
requiring from the models some of the properties required in the intuitionistic case. Also in relation with the 
study of subintuitionistic logics, Celani and Jansana \cite{CJ} got these algebras as the elements of a subvariety
of the variety of weak Heyting algebras (see also \cite{CFMSM,CNSM}).
It is known that the variety $\mathsf{S4}$, whose members are the S4-algebras, is the algebraic semantics of the modal logic
$\mathbf{S4}$. This means that $\phi$ is a theorem of $\mathbf{S4}$ if and only if the variety $\mathsf{S4}$ satisfies $\phi\approx 1$.
The variety of subresiduated lattices corresponds to the variety of algebras defined for all the equations $\phi \approx 1$ satisfied in the variety $\mathsf{S4}$ where the only connectives that appear are conjunction
$\we$, disjunction $\vee$, bottom $\perp$, top $\top$ and a new connective of implication $\Ra$, called strict implication, defined by $\varphi \Ra \psi:= \square(\varphi\rightarrow\psi)$, where $\ra$ denotes the classical implication.

Nelson's constructive logic with strong negation, which was introduced in \cite{Nel} (see also \cite{R,S,V}), is a well-known and by now fairly well-understood non-classical logic that combines the constructive approach of positive intuitionistic logic with a classical (i.e. De Morgan) negation.
The algebraic models of this logic, forming a variety whose members are called Nelson algebras, have been studied since at least the late 1950's (firstly by Rasiowa; see \cite{R} and references therein) and are also by now a fairly well-understood class of algebras. One of the main algebraic insights on this variety came, towards the end of the 1970's, with the
realisation (independently due to Fidel and Vakarelov) that every Nelson algebra can be represented as a special binary product (here called a twist structure) of a Heyting algebra. 

The main goal of this manuscript is to extend the twist construction in the framework of subresiduated lattices, 
thus obtaining a new variety, whose members will be called subresiduated Nelson algebras.
More precisely, we will show that every subresiduated Nelson algebra can be represented as a twist structure of a subresiduated lattice. Another central objective of this paper is to study the subvariety of its totally ordered members. 

The paper is organized as follows. In Section \ref{s1} we recall the definition of Nelson algebra and also 
sketch the main constructions linking Heyting algebras with Nelson algebras. Moreover,
we recall the definition of a subresiduated lattice and some of its properties. 
In Section \ref{s2} we introduce subresiduated Nelson algebras, proving that the class of subresiduated Nelson algebras
(which is a variety) properly contains the variety of Nelson algebras.
We also show that every subresiduated Nelson algebra can be represented as a twist structure of a subresiduated lattice.
In Section \ref{s4} we prove that given an arbitrary subresiduated Nelson algebra, there exists an order isomorphism
between the lattice of its congruences and the lattice of its open implicative filters (which are a kind of implicative filters).
We use it in order to give a characterization of the principal congruences.
In particular, the mentioned characterization proves that the variety of subresiduated Nelson algebras has equationally definable principal congruences (EDPC). 
We also give a description of the simple and subdirectly irreducible algebras, 
and we prove that the variety of subresiduated Nelson algebras has the congruence extention property (CEP).
In Section \ref{s5} we study the class of totally ordered subresiduated Nelson algebras
in order to give an equational base for the class generated by this variety.
Finally, in Section \ref{s6} we characterize the subresiduated Nelson algebras that 
can be represented as a twist structure of a subresiduated lattice and we also prove that there exists 
an equivalence between the algebraic category
of subresiduated lattices and the algebraic category of centered subresiduated Nelson algebras, where
a centered subresiduated Nelson algebra is a subresiduated Nelson algebra endowed with a center, i.e.,
a fixed element with respect to the involution (this element is necessarily unique).

\section{Basic results}\label{s1}

In this section we recall the definition of Nelson algebra \cite{BC,C,Vig} and some links between Heyting algebras and Nelson algebras \cite{V}, as well as the definition of subresiduated lattice and some of its properties \cite{EH}.

A Kleene algebra \cite{BC,C,K} is a bounded distributive lattice endowed with a unary operation $\sim$
which satisfies the following identities: 
\begin{enumerate}[\normalfont Ne1)]
\item $\sim \sim x = x$,
\item $\sim (x\we y) = \sim x \vee \sim y$,
\item $(x\we \sim x)\we (y \vee \sim y) = x\we \sim x$.
\end{enumerate}

\begin{definition}
An algebra $\langle T,\we,\vee,\ra,\sim,0,1 \rangle$ of type $(2,2,2,1,0,0)$ is called a Nelson algebra if
$\langle T,\we,\vee,\sim,0,1 \rangle$ is a Kleene algebra and the following identities are satisfied:
\begin{enumerate}[\normalfont Ne4)]
\item $x\ra x = 1$,
\end{enumerate}
\begin{enumerate}[\normalfont Ne5)]
\item $x\ra (y\ra z) = (x\we y)\ra z$,
\end{enumerate}
\begin{enumerate}[\normalfont Ne6)]
\item $x\we (x\ra y) = x \we (\sim x \vee y)$,
\end{enumerate}
\begin{enumerate}[\normalfont Ne7)]
\item $x\ra y \leq \sim x\vee y$, 
\end{enumerate}
\begin{enumerate}[\normalfont Ne8)]
\item $x\ra (y\we z) = (x\ra y)\we (x\ra z)$.
\end{enumerate}
\end{definition}

If $\langle T,\we,\vee,\ra,\sim,0,1\rangle$ is an algebra of type $(2,2,2,1,0,0)$ where
$\langle T,\we,\vee,0,1 \rangle$ is a bounded distributive lattice
and the condition Ne6) is satisfied, then conditions Ne7) and Ne8) are also satisfied \cite{Mon1,Mon2}. 
We write $\NA$ for the variety of Nelson algebras.

There are two key constructions that relate Heyting algebras and Nelson algebras.
Given a Heyting algebra $A$, we define the set 
\begin{equation} \label{ka}
\K(A) = \{(a,b)\in A\times A: a\we b = 0\}
\end{equation}
and then endow it with the following operations:
\begin{itemize}
\item $(a,b)\we (c,d) = (a\we c,b\vee d)$,
\item $(a,b)\vee (c,d) = (a\vee c, b\we d)$,
\item $\s (a,b) = (b,a)$,
\item $(a,b)\Ra (c,d) = (a\ra c,a\we d)$,
\item $\perp = (0,1)$, 
\item $\top = (1,0)$.
\end{itemize}
Then $\langle \K(A),\we,\vee,\Ra,\perp,\top \rangle\in \NA$ \cite{V}. In the same manuscript,
Vakarelov proves that if $T\in \NA$, then the relation $\theta$ 
defined by 
\begin{equation}\label{rel}
x \theta y\;\text{if and only if}\; x\ra y = 1\;\text{and}\; y\ra x = 1 
\end{equation}
is an equivalence relation such that $\langle T/\theta,\we,\vee,\ra,0,1 \rangle$ is a Heyting algebra 
with the operations defined by
\begin{itemize}
\item $x/\co \we y/\co := x \we y/\co$,
\item $x/\co \vee y/\co := x \vee y/\co$,
\item $x/\co \ra y/\co := x\ra y/\co$,
\item $0 := 0/\co$,
\item $1 := 1/\co$.
\end{itemize}

It is a natural question whether these constructions can be extended to subresiduated lattices.

\begin{definition} 
A subresiduated lattice (sr-lattice for short) is a pair $(A, D)$, where $A$ 
is a bounded distributive lattice and $D$ is a bounded sublattice of $A$ such 
that for each $a, b \in A$ there exists the maximum of the set $\{d\in D:a\we d\leq b\}$. 
This element is denoted by $a\rightarrow b$.
\end{definition}

Let $(A,D)$ be a subresiduated lattice. This pair can be regarded as an algebra 
$\langle A,\wedge,\vee,\ra,0,1 \rangle$ of type $(2,2,2,0,0)$ where 
$D = \{a \in A : 1 \rightarrow a = a\}=\{1\rightarrow a : a\in A\}$. 
Moreover, an algebra $\langle A,\wedge, \vee, \rightarrow, 0, 1\rangle$ is an sr-lattice if and only if 
$(A,\wedge, \vee, 0, 1)$ is a bounded distributive lattice and the 
following conditions are satisfied for every $a,b,c\in A$:
\begin{enumerate}[\normalfont 1)]
\item $(a\vee b)\ra c=(a\ra c)\we (b\ra c)$,
\item $c\ra (a\we b)=(c\ra a)\we (c\ra b)$,
\item $(a\ra b)\we (b\ra c)\leq a\ra c$,
\item $a\ra a=1$,
\item $a\we (a \ra b) \leq b$,
\item $a\ra b\leq c\ra (a\ra b)$.
\end{enumerate}

We write $\SRL$ to denote the variety whose members are sr-lattices.
In every sr-lattice the following cuasi-identity is satisfied:
if $a\leq b\ra c$ then $a\we b \leq c$.

The following example of sr-lattice will be used throughout the paper. 

\begin{example}\label{ex1}
Let $A$ be the Boolean algebra of four elements, where $a$ and $b$ are the atoms. This algebra can be 
seen as a bounded distributive lattice.
Define $D = \{0,1\}$. We have that $(A,D)$ is an sr-lattice. With an abuse of notation we write 
$A$ for this sr-lattice. Note that since $a\ra 0 = 0 \neq b$ then $A$ is not a Heyting algebra.
\end{example}

In this work, we attempt to find a more general definition than the one of a Nelson algebra in order to take the first steps toward exploiting its relation to sr-lattices.

\section{Subresiduated Nelson algebras}\label{s2}

In this section we define subresiduated Nelson algebras and we show 
that the class of subresiduated Nelson algebras, which is a variety, properly contains the variety 
of Nelson algebras. We also prove that every subresiduated Nelson algebra can be represented as a 
twist structure of a subresiduated lattice.

Let $A\in \SRL$. We define $\K(A)$ as in \eqref{ka}. 
Then $\langle \K(A),\we,\vee,\s,(0,1),(1,0) \rangle$ is a Kleene algebra \cite{C,K}.
On $\K(A)$ we also define the binary operation $\Ra$ as in Section \ref{s1}.
Note that this is a well defined map because if $(a,b)$ and $(c,d)$ are elements of $\K(A)$
then $(a\ra c)\we a\we d \leq c\we d = 0$, i.e., $(a\ra c)\we a\we d = 0$.
Thus, the structure $\langle \K(A),\we,\vee,\Ra,\s,(0,1),(1,0) \rangle$ is an algebra of type
$(2,2,2,1,0,0)$.

\begin{remark} \label{fc}
Let $A\in \SRL$ and $(a,b)$, $(c,d)$ in $\K(A)$. Then $(a,b)\Ra (c,d) = (1,0)$
if and only if $a\leq c$.
\end{remark}

\begin{definition}\label{generalised}
An algebra $\langle T, \we, \vee, \ra, \s, 0, 1\rangle$ of type $(2, 2, 2, 1, 0, 0)$ is said to be a
\textit{subresiduated Nelson algebra} if $\langle T, \wedge, \vee, \sim, 0, 1\rangle$ is a Kleene algebra and the
following conditions are satisfied for every $a,b,c\in T$:
\begin{enumerate}[\normalfont 1)]
\item $(x\vee y)\ra z = (x\ra z)\we (y\ra z)$,\label{supremo}
\item $z\ra (x\we y) = (z\ra x)\we (z\ra y)$,\label{infimo}
\item $((x\ra y)\we (y\ra z))\ra (x\ra z)=1$, \label{trans}
\item $x\ra x=1$, \label{imp1}
\item $x\we (x\ra y)\leq x\we (\s x\vee y)$, \label{inf}
\item $x\ra y\leq z\ra (x\ra y)$, \label{impl2}
\item $\s(x\ra y)\ra (x\we \s y)=1$, \label{au}
\item $(x\we \s y)\ra \s(x\ra y)=1$. \label{ultima}
\end{enumerate}
\end{definition}

We write $\SNA$  to denote the variety whose members are subresiduated Nelson algebras.

\begin{proposition} \label{Nelson}
The variety $\NA$ is a subvariety of $\SNA$.
\end{proposition}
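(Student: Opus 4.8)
The plan is to exploit the fact that $\SNA$ is defined by the identities 1)--8) of Definition \ref{generalised}, so that, since $\NA$ is itself a variety, it suffices to prove the inclusion $\NA\subseteq\SNA$, i.e.\ that every Nelson algebra satisfies 1)--8). Both classes are of type $(2,2,2,1,0,0)$ and every Nelson algebra is a Kleene algebra, so the Kleene part Ne1)--Ne3) is already available and only the eight conditions 1)--8) remain. Three of them are essentially free: condition 2) is exactly Ne8), condition 4) is exactly Ne4), and condition 5) is the order-theoretic weakening of Ne6) (once the equality $x\we(x\ra y)=x\we(\s x\vee y)$ holds, the inequality $x\we(x\ra y)\le x\we(\s x\vee y)$ is immediate).

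For the remaining conditions 1), 3), 6), 7) and 8) I would pass to the twist representation rather than argue equationally. By the representation recalled in the introduction (built from the construction \eqref{ka} together with the Heyting quotient $T/\co$ of \eqref{rel}), every $T\in\NA$ is isomorphic to a subalgebra of $\K(A)$ for a suitable Heyting algebra $A$. Since 1)--8) are identities, they are inherited by subalgebras and transported across isomorphisms; hence it is enough to verify them in $\K(A)$ itself, for $A$ an arbitrary Heyting algebra. This is the convenient setting because a Heyting algebra is in particular an sr-lattice, so the coordinatewise formulas for the operations and, crucially, Remark \ref{fc} are at our disposal: for $(a,b),(c,d)\in\K(A)$ we have $(a,b)\Ra(c,d)=(1,0)$ if and only if $a\le c$.

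With these tools each of the remaining conditions reduces to a first-coordinate computation in $A$. Writing $x=(a_1,b_1)$, $y=(a_2,b_2)$, $z=(a_3,b_3)$: condition 1) becomes the Heyting identity $(a_1\vee a_2)\ra a_3=(a_1\ra a_3)\we(a_2\ra a_3)$ on first coordinates together with distributivity on the second; condition 3) is, by Remark \ref{fc}, just the Heyting transitivity $(a_1\ra a_2)\we(a_2\ra a_3)\le a_1\ra a_3$; and condition 6) follows from $w\le e\ra w$ on first coordinates together with $e\we w\le w$ on the (order-reversed) second coordinates.

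The step I expect to be the real point of the argument is the pair 7)--8). Here $\s(x\ra y)$ and $x\we\s y$ have the \emph{same} first coordinate $a_1\we b_2$ but, in general, different second coordinates ($a_1\ra a_2$ versus $b_1\vee a_2$), so these two elements are genuinely distinct and neither lattice inequality between them need hold. Nevertheless, Remark \ref{fc} says that the truth of an identity of the form $u\Ra v=(1,0)$ depends only on first coordinates; since those coincide, both 7) and 8) hold. This is exactly the phenomenon that forces $\SNA$ to record 7) and 8) as two separate one-sided conditions rather than a single equation, and checking that it is harmless is the crux. Once 1)--8) are confirmed in every $\K(A)$, the representation yields $T\in\SNA$ for each $T\in\NA$; thus $\NA\subseteq\SNA$, and being a variety contained in $\SNA$, $\NA$ is a subvariety.
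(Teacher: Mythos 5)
Your proof is correct, but it takes a genuinely different route from the paper's. The paper argues purely equationally inside an arbitrary Nelson algebra: conditions 1), 2), 4), 5), 7), 8) of Definition \ref{generalised} are matched directly against known Nelson identities (condition 1) is (1.9) of \cite{Vig}, 2) is Ne8), 4) is Ne4), 5) follows from Ne6), and 7), 8) are (1.24) and (1.23) of \cite{Vig}), and the only two conditions requiring real work, 3) and 6), are derived from Ne5) together with monotonicity and related laws of \cite{Vig}; no representation theory is used. You instead invoke the classical Fidel--Vakarelov embedding theorem --- every Nelson algebra embeds into $\K(T/\co)$ for its Heyting quotient $T/\co$ --- and then verify the eight identities coordinatewise in $\K(A)$ for $A$ a Heyting algebra, which suffices because identities pass to subalgebras and isomorphic copies; your computations, including the key observation that $\s(x\Ra y)$ and $x\we\s y$ share the first coordinate so that both one-sided conditions 7) and 8) follow from Remark \ref{fc}, are precisely the Heyting-algebra instance of what the paper proves later, for arbitrary sr-lattices, in Proposition \ref{Kalman}. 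The trade-off is clear: the paper's argument is elementary and self-contained, keeping Proposition \ref{Nelson} independent of the twist machinery, while yours is conceptually attractive (it exhibits $\NA\subseteq\SNA$ as a shadow of the inclusion of Heyting algebras among sr-lattices) but rests on a nontrivial external theorem and, within the paper's economy, duplicates the computations of Proposition \ref{Kalman}. Note that your argument is not circular only because the representation you use is the classical one for $\NA$ (Vakarelov \cite{V}), not the paper's Theorem \ref{rept}, which comes after this proposition; since Section \ref{s1} recalls the two constructions but never states the embedding itself, you should cite \cite{V} for it explicitly.
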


\begin{proof}
Let $T\in \NA$ and $x,y,z \in T$. Condition 1) of Definition \ref{generalised} is condition (1.9) of \cite{Vig},
2) is Ne8), 4) is Ne4), 5) is a direct consequence of Ne6), 7) is (1.24) of \cite{Vig} and 8) is (1.23) of \cite{Vig}.

Now we will see 3), i.e, we will show that 
\begin{equation}\label{Ne0}
((x\ra y)\we (y\ra z))\ra (x\ra z)=1.
\end{equation}
It follows from (1.17) of \cite{Vig} that \eqref{Ne0} holds
if and only if $(y\ra z)\ra ((x\ra y)\ra (x\ra z)) = 1$.
But it follows from (1.11) of \cite{Vig} and Ne5) that
\[
(x\ra y)\ra (x\ra z) = x\ra (y\ra z) = (x\we y) \ra z.
\]
Hence, \eqref{Ne0} holds if and only if $(y\ra z)\ra ((x\we y)\ra z) = 1$.
Since $x\we y \leq y$ then it follows from (1.7) of \cite{Vig} that $y\ra z \leq (x\we y) \ra z$.
Thus, (1.3) of \cite{Vig} shows that $(y\ra z)\ra ((x\we y)\ra z) = 1$, so 3) is satisfied.

Finally we will show 6). Note that it follows from Ne5) that
\begin{equation}\label{Ne1}
z\ra (x\ra y) = (z\we x)\ra y.
\end{equation}
Besides, since $z\we x \leq x$ then it follows from (1.7) of \cite{Vig}
that
\begin{equation}\label{Ne2}
x\ra y \leq (z\we x)\ra y.
\end{equation}
Thus, from \eqref{Ne1} and \eqref{Ne2} we get $x\ra y \leq z\ra (x\ra y)$, which is 6).
\end{proof}

\begin{proposition} \label{Kalman}
If $A\in \SRL$ then $\K(A)\in \SNA$.
\end{proposition}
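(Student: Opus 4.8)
The plan is to verify that $\K(A)$ satisfies each of the eight conditions of Definition~\ref{generalised}, since the fact that $\langle \K(A),\we,\vee,\s,(0,1),(1,0)\rangle$ is a Kleene algebra has already been recorded above. Throughout I would fix elements $(a,b),(c,d),(e,f)\in\K(A)$ (so that $a\we b=c\we d=e\we f=0$), take $x=(a,b)$, $y=(c,d)$, $z=(e,f)$, and compute both sides of each identity componentwise, reading the first coordinate off the subresidual $\ra$ of $A$ and the second off the lattice operations. The one structural fact I would isolate at the outset is the order description: from $(p,q)\we(r,s)=(p\we r,q\vee s)$ one gets $(p,q)\leq(r,s)$ if and only if $p\leq r$ and $s\leq q$. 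This is what lets the two inequalities 5) and 6) be checked coordinatewise.

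For the four conditions asserting an identity of the form ``$\cdots=1$'', namely 3), 4), 7) and 8), I would reduce everything to Remark~\ref{fc}: since $1=(1,0)$, an implication $(p,q)\Ra(r,s)$ equals $(1,0)$ exactly when $p\leq r$, so each such identity collapses to a single inequality between first coordinates. Condition 4) becomes $a\ra a=1$ (property 4) of sr-lattices); condition 3) becomes $(a\ra c)\we(c\ra e)\leq a\ra e$ (property 3)); and both 7) and 8) become the trivial inequality $a\we d\leq a\we d$, once one computes that $\s((a,b)\Ra(c,d))=(a\we d,a\ra c)$ and $(a,b)\we\s(c,d)=(a\we d,b\vee c)$ share the first coordinate $a\we d$. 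For conditions 1) and 2) no reduction is needed: the first coordinates are precisely the distributivity-type axioms 1) and 2) of $\SRL$, while the second coordinates reduce to the distributivity of $A$, so I would simply expand and match.

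The only steps requiring a small argument beyond bookkeeping are the two inequalities. For 5), after computing $(a,b)\we((a,b)\Ra(c,d))=(a\we(a\ra c),\,b\vee(a\we d))$ and $(a,b)\we(\s(a,b)\vee(c,d))=(a\we(b\vee c),\,b\vee(a\we d))$, the two second coordinates coincide, so by the order description it suffices to show $a\we(a\ra c)\leq a\we(b\vee c)$; this follows from property 5) of sr-lattices, which gives $a\we(a\ra c)\leq c\leq b\vee c$, together with $a\we(a\ra c)\leq a$. For 6), the second-coordinate inequality $e\we(a\we d)\leq a\we d$ is immediate, and the first-coordinate inequality $a\ra c\leq e\ra(a\ra c)$ is exactly property 6) of $\SRL$.

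I do not expect a genuine obstacle: the proposition is essentially a coordinatewise dictionary translating the axioms of $\SRL$ into those of $\SNA$, with Remark~\ref{fc} absorbing all the ``$=1$'' conditions. If anything is delicate it is condition 5), where one must notice that the two second coordinates agree so that the inequality really reduces to the first coordinate; keeping the order convention on $\K(A)$ straight is the main point to handle with care.
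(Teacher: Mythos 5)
Your proof is correct and follows essentially the same route as the paper's: a coordinatewise verification in which Remark~\ref{fc} disposes of the conditions of the form $\cdots = 1$ (reducing 3), 4), 7), 8) to first-coordinate inequalities supplied by the sr-lattice axioms) while 1) and 2) are matched by expansion and distributivity. If anything, your explicit statement of the order description $(p,q)\leq (r,s)$ if and only if $p\leq r$ and $s\leq q$ makes the treatment of the two genuine inequalities 5) and 6) slightly cleaner than the paper's, which for 5) formally invokes Remark~\ref{fc} even though the lattice inequality is what is required; the equality of the second coordinates, visible in both your computation and the paper's, is what actually completes that step.
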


\begin{proof}
Let $A\in \SRL$. We will show that $\K(A)$ satisfies the conditions of Definition
\ref{generalised}. In order to prove it, let $x=(a,b)$, $y = (c,d)$ and $z = (e,f)$ be elements of $\K(A)$.

A direct computation shows that
\[
(x\vee y)\ra z = ((a\vee c)\ra e, (a\vee c)\we f),
\]
\[
(x\ra z)\we (y\ra z) = ((a\ra e)\we (c\ra e),(a\we f) \vee (c\we f)).
\]
Since $(a\vee c)\ra e = (a\ra e)\we (c\ra e)$ and $(a\vee c)\we f = (a\we f) \vee (c\we f)$
then $(x\vee y)\ra z = (x\ra z)\we (y\ra z)$, so 1) is satisfied.
The fact that $z\ra (x\we y) = (z\ra x) \we (z\ra y)$ can be proved following a similar reasoning, so 2) is also satisfied.
In order to show 3), note that it follows from Remark \ref{fc} that $((x\ra y)\we (y\ra z))\ra (x\ra z) = (1,0)$
if and only if $(a\ra c)\we (c\ra e)\leq a\ra e$, and the last inequality holds in sr-lattices. Hence, condition 3) holds.
Taking into account that $a\ra a = 1$ we get $x\ra x = (1,0)$, so 4) is verified too. In order to see 5), note that a straightforward computation shows that
\[
x\we (x\ra y) = (a\we (a\ra c), b\vee (a\we d)),
\]
\[
x\we (\sim x \vee y) = (a\we (b\vee c), b\vee (a\we d)).
\]
Since $a\we (a\ra c)\leq a\we c = (a\we b) \vee (a\we c) = a\we (b\vee c)$ then it follows from Remark \ref{fc} that
$((x\we (x\ra y))\ra (x\we (\sim x \vee y)) = (1,0)$. Hence, we have proved 5).
Now we will show 6). Note that
\[
x\ra y = (a\ra c, a\we d)
\]
\[
z\ra (x\ra y) = (e\ra (a\ra c),e\we (a\we d)).
\]
Since $a\ra c \leq e\ra (a\ra c)$ and $e\we a\we d\leq a\we d$ then
$x\ra y \leq z\ra (x\ra y)$. Hence, 6) is satisfied. In order to prove 7) and 8),
note that
\[
\sim (x\ra y) = (a\we d, a\ra c),
\]
\[
x\we \sim y = (a\we d,b\vee c).
\]
Since $a\we d = a\we d$ then it follows from Remark \ref{fc} that $\sim (x\ra y)\ra (x\we \sim y) = (1,0)$ and
$(x\we \sim y) \ra \sim (x\ra y) = (1,0)$. Thus, we have proved 8).

Therefore, $\K(A)\in \SNA$.
\end{proof}

It is important to note that the variety $\NA$ is a proper subvariety of $\SNA$. 
Indeed, let $A$ be the subresiduated lattice given in Example \ref{ex1}. 
In particular, $\K(A) \in \SNA$. Take $x = (1,0)$ and $y = (a,b)$, which are elements of $\K(A)$.
A direct computation shows that $x\we (x\ra y) = (0,b) \neq x\we (\sim x \vee y)$, so condition N8) is not
satisfied. Therefore, $\K(A) \notin \NA$.

\begin{proposition} \label{pvi}
Let $T\in \SNA$.
The following conditions are satisfied for every $x,y,z\in T$:
\begin{enumerate}
\item $1\ra x\leq x$,
\item if $x\leq y$ then $z\ra x\leq z\ra y$ and $y\ra z \leq x \ra z$,
\item if $x\leq y$ then $x\ra y = 1$,
\item $(x\we (x\ra y))\ra y = 1$,
\item if $x\ra y = 1$ then $x=x\we (\s x\vee y)$,
\item if $x\ra y = 1$ and $\s y\ra \s x=1$ then $x\leq y$,
\item if $x\ra y=1$ and $y\ra z=1$ then $x\ra z=1$,
\item if $x\ra y=1$ then $(x\we z)\ra (y\we z)=1$ and $(x\vee z)\ra (y\vee z)=1$,
\item if $x\ra y=1$ then $(y\ra z)\ra (x\ra z)=1$ and $(z\ra x)\ra (z\ra y)=1$.
\end{enumerate}
\end{proposition}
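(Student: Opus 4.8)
The plan is to establish the nine items in the order listed, since each relies on the earlier ones together with the defining conditions 1)--8) of Definition \ref{generalised} and the Kleene algebra laws (in particular $\s 1 = 0$, $\s 0 = 1$, the De Morgan law $\s(u\vee v)=\s u\we \s v$, and Ne3, which yields $x\we \s x\leq y\vee \s y$). Most items are routine manipulations of $\ra$ through the two distributivity laws 1)--2), reflexivity 4), and the transitivity identity 3). For item (1) I would specialize condition 5) at $x:=1$, obtaining $1\ra x\leq 1\we(\s 1\vee x)=x$ since $\s 1=0$. For item (2), if $x\leq y$ then $x\we y=x$ gives $z\ra x=(z\ra x)\we(z\ra y)$ via condition 2), so $z\ra x\leq z\ra y$; dually $x\vee y=y$ and condition 1) give $y\ra z\leq x\ra z$. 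Item (3) then follows from condition 4) and the monotonicity in the second argument established in item (2): $1=x\ra x\leq x\ra y$.

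The main obstacle is item (4), $(x\we(x\ra y))\ra y=1$, because one cannot simply argue that $x\we(x\ra y)\leq y$ (this inequality already fails in $\K(A)$ for the sr-lattice of Example \ref{ex1}). My strategy is to pass through condition 5): since $x\we(x\ra y)\leq x\we(\s x\vee y)$, the antitonicity in the first argument established in item (2) reduces the claim to showing $(x\we(\s x\vee y))\ra y=1$. Distributing in the lattice and applying condition 1) splits this as $((x\we \s x)\ra y)\we((x\we y)\ra y)$. The second factor is $1$ by item (3) because $x\we y\leq y$. For the first factor I would use condition 8) at $y:=x$, which together with condition 4) gives $(x\we \s x)\ra \s(x\ra x)=(x\we \s x)\ra 0=1$; then monotonicity in the second argument (from item (2), using $0\leq y$) upgrades this to $(x\we \s x)\ra y=1$. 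This is the delicate point: the ``contradiction'' element $x\we \s x$ is not below $y$, yet it still maps to $1$ under $\ra y$.

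For item (5), condition 5) gives $x=x\we(x\ra y)\leq x\we(\s x\vee y)\leq x$ when $x\ra y=1$, hence equality. Item (6) is then obtained by applying item (5) twice: from $x\ra y=1$ we get $x=(x\we \s x)\vee(x\we y)$, and from $\s y\ra \s x=1$ we get, after applying $\s$ and the De Morgan law, $y=(y\vee \s y)\we(x\vee y)$. Since $x\we \s x\leq y\vee \s y$ by Ne3 and $x\we \s x\leq x\vee y$ trivially, the second equality yields $x\we \s x\leq y$; combined with the first, $x\leq y$.

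Finally, the remaining items follow directly from the transitivity condition 3) and the distributivity laws. Item (7) substitutes the hypotheses into condition 3) to get $1\ra(x\ra z)=1$, whence $x\ra z=1$ by item (1). Item (8) uses conditions 2) and 1) to reduce $(x\we z)\ra(y\we z)$ and $(x\vee z)\ra(y\vee z)$ to meets whose factors are each $1$ by items (2) and (3). Item (9) is immediate from condition 3): specializing $((x\ra y)\we(y\ra z))\ra(x\ra z)=1$ with $x\ra y=1$ gives the first equation, and the substitution $x\mapsto z,\,y\mapsto x,\,z\mapsto y$ gives the second.
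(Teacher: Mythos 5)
Your proposal is correct and follows essentially the same route as the paper's own proof: the same specialization of condition 5) for items (1), (4) and (5), the same key step $(x\we\s x)\ra y=1$ obtained from condition 8) with $y:=x$ for item (4), the same Kleene/De Morgan argument via Ne3 for item (6), and the same direct specializations of condition 3) for items (7)--(9).
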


\begin{proof}
1. By \ref{inf}) of Definition \ref{generalised},
\[
(1\ra x)=1\we (1\ra x)\leq 1\we(\s 1 \vee x)=x.
\]

2. Suppose that $x\leq y$. Then $x=x\we y$, so it follows from \ref{infimo}) of Definition \ref{generalised} that
\[
z\ra x=z\ra (x\we y)=(z\ra x)\we (z\ra y)\leq z\ra y.
\]
Using \ref{supremo}) of Definition \ref{generalised}, the proof of the other inequality is analogous.

3. It follows from the previous item and \ref{imp1}) of Definition \ref{generalised}.

4. It follows from \ref{inf}) that $x\we (x\ra y) \leq x\we (\s x \vee y)$, so
\begin{equation}\label{eq1}
(x\we (\s x \vee y)) \ra y \leq ((x\we (x\ra y))\ra y.
\end{equation}
Also note that $(x\we \s x) \ra y = 1$.
Indeed, since $0\leq y$ then $(x\we \s x) \ra 0 \leq (x\we \s x)\ra y$.
But it follows from \ref{ultima}) that
\[
(x\we \s x) \ra 0 = (x\we \s x) \ra \s (x\ra x) = 1,
\]
so
\begin{equation}\label{eq2}
(x\we \s x) \ra y = 1.
\end{equation}
Thus, 
$(x\we (\s x \vee y)) \ra y = ((x\we \s x)\vee (x\we y)) \ra y$. It follows from \eqref{eq2} and the previous item that  $((x\we \s x)\vee (x\we y)) \ra y= ((x\we \s x) \ra y)\we ((x\we y)\ra y)=1$.

Hence, it follows from \eqref{eq1} that
$1\leq (x\we (x\ra y))\ra y$, i.e., $(x\we (x\ra y))\ra y = 1$.

5. It follows from \ref{inf}) of Definition \ref{generalised}.

6. Suppose that $x\ra y = 1$ and $\s y\ra \s x = 1$. It follows from 5.
of this proposition that
\[
x = x\we (\s x \vee y) = (x\we \s x) \vee (x\we y),
\]
\[
\s y = \s y \we (y \vee \s x) =(\s y\we y)\vee (\s y \we \s x),
\]
so
\[
y = (y\vee \s y) \we (x\vee y).
\]
In particular,
\[
x =(x\we \s x)\vee (x\we y) \leq (y\vee \s y) \vee (x\we y) = y\vee \s y,
\]
so
\[
x = x\we (x\vee y) \leq (y\vee \s y) \we (x\vee y) = y.
\]
Therefore, $x\leq y$.

7. Suppose that $x\ra y=1$ and $y\ra z=1$. By \ref{trans}),
\[
1\ra (x\ra z)=((x\ra y)\we (y\ra z))\ra (x\ra z)=1.
\]
Thus, it follows from 1. of this proposition that $1=1\ra (x\ra z)\leq x\ra z$.
Then $x\ra z = 1$.

8. Suppose that $x\ra y=1$. Thus, it follows from \ref{infimo}) that
\[
(x\we z)\ra (y\we z)=((x\we z)\ra y)\we ((x\we z)\ra  z))=(x\we z)\ra y.
\]
From $x\we z\leq x$ we get $1=x\ra y\leq (x\we z)\ra y$ and therefore $(x\we z)\ra (y\we z)=1$.
The other implication is analogous using \ref{supremo}).

9. Suppose that $x\ra y=1$. Then by \ref{trans}),
\[
1=((x\ra y)\we (y\ra z))\ra (x\ra z)=(y\ra z)\ra (x\ra z).
\]
Also by \ref{trans}),
\[
1=((z\rightarrow x)\we (x\ra y))\ra (z\ra y)=(z\ra x)\ra (z\ra y).
\]
\end{proof}

Let $A\in \SNA$. We define the binary relation $\co$ as in \eqref{rel} of Section \ref{s1}.

\begin{lemma} \label{l1}
Let $T\in \SNA$. Then $\co$ is an equivalence relation compatible with $\we$, $\vee$ and $\ra$.
\end{lemma}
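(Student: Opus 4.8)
The plan is to verify separately that $\co$ is an equivalence relation and that it is compatible with each of the three operations $\we$, $\vee$ and $\ra$, in every case reducing the argument to the properties of the relation ``$x\ra y=1$'' already collected in Proposition \ref{pvi}.

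First I would check that $\co$ is an equivalence relation. Reflexivity is immediate from \ref{imp1}) of Definition \ref{generalised}, since $x\ra x=1$. Symmetry holds because the defining condition ``$x\ra y=1$ and $y\ra x=1$'' is manifestly symmetric in $x$ and $y$. For transitivity, if $x\co y$ and $y\co z$, then $x\ra y=1$ and $y\ra z=1$ give $x\ra z=1$ by item 7 of Proposition \ref{pvi}; applying the same item to $z\ra y=1$ and $y\ra x=1$ yields $z\ra x=1$, so $x\co z$.

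Next I would treat compatibility with $\we$ and $\vee$. Suppose $x\co x'$ and $y\co y'$. The key device is to change one coordinate at a time and then compose using transitivity of ``$\ra=1$'' (item 7 of Proposition \ref{pvi}). For the meet, from $x\ra x'=1$ and item 8 of Proposition \ref{pvi} we obtain $(x\we y)\ra (x'\we y)=1$, while from $y\ra y'=1$ and the same item we obtain $(x'\we y)\ra (x'\we y')=1$; composing yields $(x\we y)\ra (x'\we y')=1$. The reverse implication $(x'\we y')\ra (x\we y)=1$ follows symmetrically from $x'\ra x=1$ and $y'\ra y=1$, so $(x\we y)\co (x'\we y')$. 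The argument for $\vee$ is identical, using the join part of item 8.

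Finally, compatibility with $\ra$ is the step requiring slightly more care, and it is where I expect the main (though still modest) effort to lie, since both arguments of the implication move and the two directions draw on opposite halves of the hypotheses. Assume again $x\co x'$ and $y\co y'$. Using $x'\ra x=1$ together with the first part of item 9 of Proposition \ref{pvi} gives $(x\ra y)\ra (x'\ra y)=1$, and using $y\ra y'=1$ together with the second part of item 9 gives $(x'\ra y)\ra (x'\ra y')=1$; composing via item 7 yields $(x\ra y)\ra (x'\ra y')=1$. The opposite direction $(x'\ra y')\ra (x\ra y)=1$ is obtained in the same way from $x\ra x'=1$ and $y'\ra y=1$. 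Hence $(x\ra y)\co (x'\ra y')$, which completes the verification that $\co$ is an equivalence relation compatible with $\we$, $\vee$ and $\ra$.
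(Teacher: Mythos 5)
Your proof is correct and follows essentially the same route as the paper: both establish transitivity via item 7 of Proposition \ref{pvi} and derive compatibility with $\we$, $\vee$ and $\ra$ from items 8 and 9. The only difference is that the paper states compatibility in one coordinate at a time (which suffices, by transitivity), whereas you spell out explicitly the composition argument giving full two-variable compatibility --- a harmless elaboration of the same idea.
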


\begin{proof}
The reflexivity and symmetry of $\co$ are immediate. The transitivity of the relation
follows from 7. of Proposition \ref{pvi}. Thus, $\co$ is an equivalence relation.
In order to show that $\co$ is compatible with $\we$, $\vee$ and $\ra$, let $x,y,z\in T$ such that $x\co y$.
It follows from 8. of Proposition \ref{pvi} that $(x\we z,y\we z)\in \co$ and $(x\vee z,y\vee z) \in \co$, so
$\co$ is compatible with $\we$ and $\vee$. Finally, it follows from 9. of the previous proposition
that $(x\ra z,y\ra z)\in \theta$ and $(z\ra x,z\ra y) \in \co$, which implies that $\co$ is compatible with respect to $\ra$.
\end{proof}

Let $\langle T, \we, \vee, \ra, \s, 0, 1\rangle$ be a subresiduated Nelson algebra.
Then it follows from Lemma \ref{l1} that we can define on $T/\theta$ the operations 
$\we$, $\vee$, $\ra$, $0$ and $1$ as in Section \ref{s1}.
In particular, $\langle T/\co, \we, \vee, 0, 1\rangle$ is a bounded distributive lattice.
We denote by $\preceq$ to the order relation of this lattice.

\begin{lemma}\label{l2}
Let $T\in \SNA$ and $x,y\in A$. Then $x/\co \preceq y/\co$ if and only if $x\ra y = 1$.
Moreover, $x\ra y = 1$ if and only if $x\ra y/\co = 1/\co$.
\end{lemma}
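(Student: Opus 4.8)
The plan is to reduce each biconditional to the definition of the quotient lattice order and then to the axioms of Definition \ref{generalised} together with Proposition \ref{pvi}. Throughout I use that the order $\preceq$ on $T/\co$ is recovered from meets, so that $x/\co \preceq y/\co$ holds precisely when $x/\co \we y/\co = x/\co$, i.e. when $(x\we y)/\co = x/\co$, which by definition of the quotient means $(x\we y)\,\co\, x$.

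For the first equivalence I would unpack $(x\we y)\,\co\, x$ into its two defining conditions, namely $(x\we y)\ra x = 1$ and $x\ra (x\we y)=1$. The first of these is automatic: since $x\we y\leq x$, item 3 of Proposition \ref{pvi} yields $(x\we y)\ra x=1$. For the second, the key computation is to apply condition \ref{infimo}) of Definition \ref{generalised} together with \ref{imp1}) to get $x\ra (x\we y)=(x\ra x)\we (x\ra y)=1\we(x\ra y)=x\ra y$. Hence $(x\we y)\,\co\, x$ collapses to the single equation $x\ra y=1$, which gives $x/\co \preceq y/\co$ if and only if $x\ra y=1$.

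For the moreover part I would isolate the general observation that for every $w\in T$ one has $w\,\co\,1$ if and only if $w=1$. Indeed $w\ra 1=1$ holds automatically by item 3 of Proposition \ref{pvi}, so $w\,\co\,1$ is equivalent to the single condition $1\ra w=1$; and by item 1 of Proposition \ref{pvi} we have $1\ra w\leq w$, whence $1\ra w=1$ forces $w=1$, while conversely $w=1$ gives $1\ra w=1\ra 1=1$ using \ref{imp1}). Applying this with $w=x\ra y$, we obtain $(x\ra y)/\co = 1/\co$ if and only if $(x\ra y)\,\co\,1$ if and only if $x\ra y=1$, as required.

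The argument is essentially a bookkeeping of which defining conditions of $\co$ are automatic, so there is no deep obstacle; the step most likely to cause a slip is the reduction of $(x\we y)\,\co\,x$ to a single implication, where one must remember that the inequality $x\we y\leq x$ makes one of the two congruence conditions trivial and that distributivity of $\ra$ over $\we$ together with $x\ra x=1$ turns the other into exactly $x\ra y$.
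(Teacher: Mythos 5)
Your proof is correct and follows essentially the same route as the paper's: both reduce $x/\co \preceq y/\co$ to the congruence $(x\we y)\,\co\,x$, exploit the identity $x\ra(x\we y)=(x\ra x)\we(x\ra y)=x\ra y$, and settle the ``moreover'' part via $1\ra w\leq w$ (item 1 of Proposition \ref{pvi}). The only difference is cosmetic: the paper invokes item 8 of Proposition \ref{pvi} to handle the converse direction, whereas you reuse the same identity for both directions, a slight economy.
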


\begin{proof}
Suppose that $x/\co \preceq y/\co$, i.e., $x/\co = x\we y/\co$, so $x\ra (x\we y) = 1$.
But $x\ra (x\we y) = (x\ra y)\we (x\ra x) = (x\ra y)\we 1 = x\ra y$, so
$x\ra y = 1$.
Conversely, suppose that $x\ra y = 1$. Thus, it follows from 8. of Proposition \ref{pvi}
that $(x\we x) \ra (x\we y) = 1$, i.e., $x\ra (x\we y) = 1$.
Since we also have that $(x\we y)\ra x = 1$, we conclude that $x/\co = x\we y/\co$, i.e.,
$x/\co \preceq y/\co$.

Finally suppose that $x\ra y/\co = 1/\co$, so $1\ra (x\ra y) = 1$. Therefore,
it follows from 1. of Proposition \ref{pvi} that $x\ra y = 1$.
\end{proof}

\begin{proposition}
If $T\in \SNA$ then $T/\co \in \SRL$.
\end{proposition}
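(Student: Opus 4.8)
The plan is to use the characterization of sr-lattices stated after Definition \ref{generalised}: an algebra $\langle A, \we, \vee, \ra, 0, 1\rangle$ is an sr-lattice if and only if $\langle A, \we, \vee, 0, 1\rangle$ is a bounded distributive lattice and the six conditions 1)--6) listed there are satisfied. Since we already know from Lemma \ref{l1} and the discussion following it that $\langle T/\co, \we, \vee, 0, 1\rangle$ is a bounded distributive lattice and that $\ra$ is a well-defined operation on $T/\co$, it only remains to verify those six conditions for the quotient operations. I would therefore check them one by one, splitting them into the equational ones and the order-theoretic ones.

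First I would dispose of the three equational conditions 1), 2) and 4). For these, note that the corresponding identities already hold in $T$: conditions 1) and 2) of the sr-lattice characterization are precisely conditions \ref{supremo}) and \ref{infimo}) of Definition \ref{generalised}, and $x\ra x = 1$ is condition \ref{imp1}). Since $\co$ is compatible with all the operations by Lemma \ref{l1}, passing to equivalence classes preserves these equalities, so conditions 1), 2) and 4) hold in $T/\co$.

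The three order conditions 3), 5) and 6) are where Lemma \ref{l2} does the work: an inequality $p/\co \preceq q/\co$ in the quotient is equivalent to $p\ra q = 1$ in $T$. Thus condition 3) reduces to showing $((x\ra y)\we (y\ra z))\ra (x\ra z) = 1$, which is exactly condition \ref{trans}); condition 5) reduces to $(x\we (x\ra y))\ra y = 1$, which is item 4 of Proposition \ref{pvi}; and condition 6) reduces to $(x\ra y)\ra (z\ra (x\ra y)) = 1$, which follows from condition \ref{impl2}) together with item 3 of Proposition \ref{pvi}.

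Since each step amounts to translating the quotient statement back into $T$ via Lemma \ref{l2} and invoking an already-established axiom, I do not expect a genuine obstacle here; the proposition is essentially a bookkeeping corollary of the preparatory lemmas. The only point demanding care is to keep straight the distinction between the lattice order $\leq$ of $T$ and the order $\preceq$ of $T/\co$ (so that, for instance, condition 5) is read as $x/\co \we (x/\co \ra y/\co) \preceq y/\co$ rather than as a statement about $\leq$ in $T$), and to confirm that the meets and joins appearing inside the six conditions are computed with the compatible quotient operations. Having verified all six conditions, I would conclude that $T/\co$ meets the characterization and is therefore an sr-lattice.
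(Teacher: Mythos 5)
Your proof is correct and follows essentially the same route as the paper: reduce each quotient inequality via Lemma \ref{l2} to an identity of the form $p\ra q = 1$ in $T$, then invoke the axioms of Definition \ref{generalised} and item 4 of Proposition \ref{pvi}. The paper is merely terser, verifying only the two inequalities it regards as nontrivial (conditions 3) and 5)) and treating the equational conditions and condition 6) as immediate, whereas you spell all six out explicitly.
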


\begin{proof}
Let $T\in \SNA$ and $x,y,z\in T$. We only need to show that the inequalities
$x\we (x\ra y)/\co \preceq y/\co$ and $(x\ra y)\we (y\ra z)/\co \preceq x\ra z/\co$
are satisfied. It follows from Lemma \ref{l1} that $x\we (x\ra y)/\co \preceq y/\co$
if and only if $(x\we (x\ra y))\ra y =  1$. Since by 4. of Proposition
\ref{pvi} we have that the previous equality holds, so $x\we (x\ra y)/\co \preceq y/\co$.
Finally, also note that $(x\ra y)\we (y\ra z)/\co \preceq x\ra z/\co$ if and only if
$((x\ra y)\we (y\ra z))\ra (x\ra z) = 1$. But the equality $((x\ra y)\we (y\ra z))\ra (x\ra z) = 1$
is satisfied, so $(x\ra y)\we (y\ra z)/\co \preceq x\ra z/\co$.
\end{proof}

\begin{theorem}\label{rept}
Let $T\in \SNA$. Then the map $\rho_T:T\ra \K(T/\co)$ given by $\rho_{T}(x) = (x/\co,\s x/\co)$ is a monomorphism.
\end{theorem}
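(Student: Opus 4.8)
The plan is to show that $\rho_T$ is a homomorphism and then that it is injective. First I would verify that $\rho_T$ respects each operation. For the lattice operations this is direct from the definitions of $\we$ and $\vee$ in $\K(T/\co)$ together with the De Morgan law $\s(x\we y)=\s x\vee \s y$ (and its dual): indeed $\rho_T(x\we y)=(x\we y/\co,\s(x\we y)/\co)=(x/\co \we y/\co, \s x/\co \vee \s y/\co)$, which is exactly $\rho_T(x)\we \rho_T(y)$, and dually for $\vee$. Compatibility with $\s$ is immediate from $\s\s x=x$, giving $\rho_T(\s x)=(\s x/\co, x/\co)=\s\rho_T(x)$. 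The constants $0,1$ map to $(0/\co,1/\co)=\perp$ and $(1/\co,0/\co)=\top$ as required.

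The main work is the implication. By the definition of $\Ra$ in $\K(T/\co)$ I must check that
\[
\rho_T(x\ra y)=(x\ra y/\co,\;\s(x\ra y)/\co)
\]
equals
\[
\rho_T(x)\Ra\rho_T(y)=(x/\co \ra y/\co,\; x/\co \we \s y/\co).
\]
The first coordinates agree since $\ra$ on $T/\co$ is defined by $x/\co\ra y/\co := x\ra y/\co$. For the second coordinates I need $\s(x\ra y)/\co = x\we \s y/\co$, i.e.\ $\s(x\ra y)\,\co\,(x\we \s y)$. This is precisely where conditions \ref{au}) and \ref{ultima}) of Definition \ref{generalised} do their job: \ref{au}) gives $\s(x\ra y)\ra (x\we \s y)=1$ and \ref{ultima}) gives $(x\we \s y)\ra \s(x\ra y)=1$, which together are exactly the defining conditions for $\s(x\ra y)\,\co\,(x\we \s y)$. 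I expect this to be the crux of the argument, and it is the reason those two somewhat opaque axioms were included in the definition.

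Finally, for injectivity, suppose $\rho_T(x)=\rho_T(y)$, so $x/\co=y/\co$ and $\s x/\co=\s y/\co$. The first equality means $x\ra y=1$ and $y\ra x=1$, and the second means $\s x\ra \s y=1$ and $\s y\ra \s x=1$. From $x\ra y=1$ together with $\s y\ra \s x=1$, item 6 of Proposition \ref{pvi} yields $x\leq y$; symmetrically, from $y\ra x=1$ and $\s x\ra \s y=1$ the same item gives $y\leq x$. Hence $x=y$, so $\rho_T$ is injective, completing the proof that it is a monomorphism.
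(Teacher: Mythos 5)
Your proposal follows the same route as the paper's proof: the lattice operations, $\s$, and the constants are immediate; preservation of the implication is exactly where axioms \ref{au}) and \ref{ultima}) of Definition \ref{generalised} enter (the paper cites precisely these two conditions); and injectivity is obtained from item 6 of Proposition \ref{pvi} applied twice, which is also the paper's argument. Those parts are correct and, if anything, spelled out in more detail than in the paper.

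However, there is one genuine omission: you never check that $\rho_T$ is \emph{well defined}, i.e.\ that $\rho_T(x)=(x/\co,\s x/\co)$ actually lies in $\K(T/\co)$. Membership in $\K(T/\co)$ requires $x/\co \we \s x/\co = 0/\co$, that is, $(x\we \s x)\ra 0 = 1$ (the other half, $0\ra (x\we\s x)=1$, is trivial since $0\leq x\we\s x$). This is not automatic: $T/\co$ is only a subresiduated lattice, not a Heyting algebra, so there is no general reason for an element and its ``negation-like'' companion to meet at $0/\co$; the verification genuinely uses axiom \ref{ultima}) with $y=x$, namely
\[
(x\we \s x)\ra 0 = (x\we \s x)\ra \s(x\ra x) = 1 .
\]
The paper opens its proof with exactly this computation before addressing the homomorphism and injectivity claims. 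The gap is easily repaired --- the fix is the three-line argument above --- but as written your map is not yet known to land in the claimed codomain, so the proof is incomplete without it.
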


\begin{proof}
We write $\rho$ in place of $\rho_T$.
First we will show that $\rho$ is a well defined map. Let $x\in T$.
Note that $x/\co \we \s x/\co = 0/\co$ if and only if $(x\we \s x) \ra 0 = 1$.
It follows from \ref{ultima}) of Definition \ref{generalised}
that $(x\we \s x) \ra 0 = (x\we \s x) \ra \s(x\ra x) = 1$.
Thus, $\rho$ is well defined. The fact that $\rho$ is a bounded lattice homomorphism is immediate.
It is also immediate that $\rho$ preserves $\s$.
Moreover, \ref{au}) and \ref{ultima}) of Definition \ref{generalised} show that $h$ preserves the implication.
Hence, $\rho$ is a homomorphism. Finally, a direct computation based in item 6. of Proposition \ref{pvi}
proves that $\rho$ is an injective map.
Therefore, $\rho$ is a monomorphism.
\end{proof}

\section{Congruence relations on subresiduated Nelson algebras} \label{s4}

We start this section by giving some elemental definitions.

\begin{definition}
Let $T\in \SNA$ and $F\subseteq T$. 
\begin{enumerate}[\normalfont 1)]
\item We say that $F$ is a filter of $T$ if $1\in F$, $F$ is an upset (i.e.,
for every $x, y \in T$, if $x\leq y$ and $x\in F$ then $y\in F$) and $x\we y \in F$, for all $x, y\in F$.
If in addition $1\ra x\in F$ for every $x\in F$, we say that $F$ is an open filter of $T$.
\item We say that $F$ is an implicative filter of $T$
if $1\in F$ and for every $x,y \in F$, if $x\in F$ and $x\ra y \in F$ then $y\in F$.
If in addition $1\ra x\in F$ for every $x\in F$, we say that $F$ is an open implicative filter of $T$.
\end{enumerate}
\end{definition}

In this section we prove that for every $T\in \SNA$ there exists an order isomorphism
between the lattice of congruences of $A$ and the lattice of open implicative filters of $T$.
We use it in order to give a characterization of the principal congruences of $T$.
In particular, the mentioned characterization proves that the variety $\SNA$ has EDPC. 
We also give a description of the simple and subdirectly irreducible algebras of $\SNA$, 
and we prove that the variety $\SNA$ has CEP.

We start by giving some elemental properties of sr-lattices, which will then be used to transfer them into the framework of subresiduated Nelson algebras.

Let $A\in \SRL$ and $a\in A$. We define $\sq a:= 1\ra a$.
In the same way, given $T\in \SNA$ and $x\in T$, we define 
$\sq x:= 1\ra x$.

\begin{lemma}\label{lc1}
Let $A\in \SRL$ and $a,b,c\in A$. Then the following conditions are satisfied:
\begin{enumerate}[\normalfont 1)]
\item $a\ra (b\ra c) \leq (a\ra b)\ra (a\ra c)$,
\item $\square a\ra (\square b\ra c) = \square b\ra (\square a\ra c)$,
\item $\square b \leq a \ra (a\we b)$.
\end{enumerate}
\end{lemma}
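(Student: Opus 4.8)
The plan is to reduce all three statements to the single adjunction property that governs the implication of an sr-lattice. Since $a\ra b=\max\{d\in D:a\we d\leq b\}$, every term of the form $x\ra y$ --- and in particular every $\sq z=1\ra z$ --- lies in $D$, and $D$ is closed under $\we$. The key tool I would record first is: for any $d\in D$,
\[
d\leq a\ra b \quad\text{if and only if}\quad a\we d\leq b.
\]
The forward direction is condition 5) (namely $a\we(a\ra b)\leq b$) together with monotonicity of $\we$, while the backward direction is exactly the defining maximality of $a\ra b$ over $D$. With this equivalence in hand, each item becomes a short unfolding, the main point being to keep track of which intermediate terms lie in $D$ so that the backward direction is legitimately applicable.

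For item 1), since the left-hand side $a\ra(b\ra c)$ lies in $D$, the adjunction lets me replace the goal $a\ra(b\ra c)\leq(a\ra b)\ra(a\ra c)$ by the inequality $(a\ra b)\we(a\ra(b\ra c))\leq a\ra c$; the left-hand element again lies in $D$, so a second application reduces matters to $a\we(a\ra b)\we(a\ra(b\ra c))\leq c$. I would then finish by applying condition 5) twice: from $a\we(a\ra b)\leq b$ and $a\we(a\ra(b\ra c))\leq b\ra c$ the left side drops below $b\we(b\ra c)\leq c$.

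Item 2) I would prove by the same strip-away procedure: using the adjunction twice it suffices to show $\sq a\we\sq b\we(\sq a\ra(\sq b\ra c))\leq c$, which follows from two uses of condition 5) exactly as above; since $\sq a$ and $\sq b$ play symmetric roles, swapping them yields the reverse inequality and hence the claimed equality. Item 3) is the quickest: the adjunction turns $\sq b\leq a\ra(a\we b)$ into $a\we\sq b\leq a\we b$, and this holds because $\sq b=1\ra b\leq b$ by condition 5) applied with the unit, whence $a\we\sq b\leq a\we b$. I do not anticipate a genuine obstacle; beyond the bookkeeping of membership in $D$, the proof is a routine iteration of the adjunction and condition 5).
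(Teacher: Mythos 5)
Your proof is correct, but it takes a genuinely different route from the paper: the paper gives no self-contained argument at all, citing results of the reference [CFMSM] for items 1) and 2) and dismissing 3) as ``a direct computation''. You instead derive everything from the single adjunction: for $d\in D$, $d\leq a\ra b$ if and only if $a\we d\leq b$, whose forward direction is axiom 5) plus monotonicity and whose backward direction is the defining maximality of $a\ra b$. Your bookkeeping of membership in $D$ is exactly what makes the repeated use of the backward direction legitimate: $x\ra y$ always lies in $D$ (being the maximum of a subset of $D$), $\sq z = 1\ra z\in D$, and $D$ is a sublattice, hence closed under $\we$; with that, each of the three reductions and the final applications of axiom 5) go through as you wrote them (for 1), the chain $a\we(a\ra b)\we(a\ra(b\ra c))\leq b\we(b\ra c)\leq c$; for 2), the symmetric argument in $\sq a,\sq b$; for 3), $a\we\sq b\leq a\we b$ since $\sq b\leq b$). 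What your approach buys is a fully self-contained proof that exhibits the lemma as a formal consequence of the residuation-relative-to-$D$ property, working directly with the pair presentation $(A,D)$ rather than the equational axiomatization; what the paper's approach buys is brevity, at the cost of sending the reader to an external source.
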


\begin{proof}
Conditions 1) and 2) follow from results of \cite{CFMSM}.
Condition 3) follows from a direct computation. 
\end{proof}

\begin{lemma}\label{lc1b}
Let $T\in \SNA$ and $v,w,x,y,z\in T$. Then for every $x,y,z\in T$ the following conditions are satisfied:
\begin{enumerate}[\normalfont 1)]
\item $x\ra (y\ra z) \leq (x\ra y)\ra (x\ra z)$,
\item $(x \ra v)\ra ((y\ra w)\ra z) = (y\ra w) \ra ((x\ra v)\ra z)$,
\item $\sq(x\ra y) = x\ra y$,
\item $\sq y \leq x \ra (x\we y)$,
\item $\sq x \leq \s x\ra 0$.
\end{enumerate}
\end{lemma}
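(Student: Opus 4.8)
The plan is to reduce each item to the corresponding property of the sr-lattice $A := T/\co$ by transporting it along the representation $\rho_T : T \ra \K(A)$ of Theorem \ref{rept}. Since $\rho_T$ is a monomorphism, it is a lattice embedding and hence an order embedding preserving $\we,\vee,\ra,\s$ (and therefore $\sq = 1\ra\cdot$); thus an inequality or equality holds in $T$ iff it holds between the images in $\K(A)$. Writing $\rho_T(u) = (u_1,u_2)$ for $u\in T$ --- so that $u_1 = u/\co$, $u_2 = \s u/\co$ and $u_1\we u_2 = 0$ in $A$ --- I recall that in $\K(A)$ one has $(p,q)\leq (r,s)$ iff $p\leq r$ and $s\leq q$, that $(p,q)\Ra(r,s) = (p\ra r,\, p\we s)$, and that $\sq(p,q) = (1,0)\Ra(p,q) = (\sq p,\, q)$. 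Consequently each claim splits into a statement about first coordinates and one about second coordinates.

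First I would dispatch the three items that mirror Lemma \ref{lc1}. For item 1 a direct computation gives first coordinates $x_1\ra(y_1\ra z_1)$ and $(x_1\ra y_1)\ra(x_1\ra z_1)$, whose comparison is precisely Lemma \ref{lc1}(1) in $A$; the second-coordinate requirement reduces (after cancelling the common factor $z_2$) to $x_1\we(x_1\ra y_1)\leq x_1\we y_1$, a one-line sr-lattice computation. For item 4 the first coordinate of $\rho_T(\sq y)$ is $\sq y_1$, so the first-coordinate inequality is exactly Lemma \ref{lc1}(3), and the second-coordinate inequality reads $x_1\we(x_2\vee y_2)\leq y_2$, which follows since $x_1\we x_2 = 0$ collapses the left side to $x_1\we y_2$. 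For item 2 the key observation is that the first coordinates $x_1\ra v_1$ and $y_1\ra w_1$ are $\sq$-fixed in $A$ (every implication $p\ra r$ in an sr-lattice satisfies $\sq(p\ra r) = p\ra r$), so they are of the form $\sq(\cdot)$ and Lemma \ref{lc1}(2) yields equality of first coordinates; equality of second coordinates is immediate from commutativity of $\we$.

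The two remaining items have no sr-lattice analogue, and I would argue them separately. Item 3 is cleanest without the representation: $x\ra y \leq \sq(x\ra y)$ by condition \ref{impl2}) of Definition \ref{generalised} with $z = 1$, while $\sq(x\ra y) = 1\ra(x\ra y)\leq x\ra y$ by item 1 of Proposition \ref{pvi}. For item 5 the images are $\rho_T(\sq x) = (\sq x_1, x_2)$ and $\rho_T(\s x\ra 0) = (x_2\ra 0,\, x_2)$; the second coordinates coincide, and the first-coordinate inequality $\sq x_1\leq x_2\ra 0$ holds because $\sq x_1 = 1\ra x_1$ lies in the distinguished sublattice $D$ of $A$ and satisfies $(\sq x_1)\we x_2\leq x_1\we x_2 = 0$, so $\sq x_1$ belongs to the set whose maximum defines $x_2\ra 0$.

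I expect the only real difficulty to be bookkeeping: keeping the two coordinates straight under the Kleene order (remembering that the second coordinate is reversed) and verifying that the first-coordinate expressions match the hypotheses of Lemma \ref{lc1} on the nose --- notably, for item 2, spotting that implications are $\sq$-fixed so the two arguments genuinely have the required form. Once these identifications are made, each leftover verification is a routine distributive- or sr-lattice calculation.
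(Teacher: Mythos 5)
Your proof is correct and follows essentially the same route as the paper: embed $T$ into $\K(T/\co)$ via Theorem \ref{rept}, reduce items 1), 2) and 4) to Lemma \ref{lc1} by coordinatewise computation, and settle item 5) by the same one-line argument (that $\sq x_1 \in D$ and $\sq x_1 \we x_2 = 0$, so $\sq x_1 \leq x_2 \ra 0$) that the paper uses. You merely fill in the details the paper dismisses as ``direct computation'' (and prove item 3) intrinsically in $T$ from condition \ref{impl2}) and Proposition \ref{pvi}, which is equally fine).
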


\begin{proof}
By Theorem \ref{rept} we can assume that $T$ is a subalgebra of $\K(A)$ for some $A\in \SRL$.
Conditions 1) and 2) follows from a direct computation based in Lemma \ref{lc1}.
Condition 3) and 4) follows from a direct computation.
In order to show 4), let $x=(a,b) \in T$. 
We will see that $\sq x \leq \s x\ra 0$, i.e., $\sq a \leq b\ra 0$. Since $\sq a\we b \leq a\we b = 0$ then
$\sq a \leq b\ra 0$, and hence our result is proved.  
\end{proof}

Let $T\in \SNA$. We write $\Con(T)$ to indicate the set of congruences of $T$. Given $\theta \in \Con(T)$ and $x\in T$,
we write $x/\theta$ for the equivalence class of $x$ associated to the congruence $\theta$.

\begin{lemma} \label{lc2}
Let $T\in \SNA$, $\theta \in \Con(T)$ and $x,y \in T$. Then $(x,y)\in \theta$ if and only if
$x\ra y$, $\s y \ra \s x$, $y\ra x$, $\s x \ra \s y \in 1/\theta$.
\end{lemma}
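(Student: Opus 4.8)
The statement characterizes membership in a congruence $\theta$ via four implications landing in $1/\theta$. The plan is to prove both directions, using the fact that $\theta$ is a congruence (hence compatible with all operations and containing $1$ as an equivalence class) together with the identities of Definition \ref{generalised}.

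First I would prove the forward direction, which is the easy one. Assume $(x,y)\in\theta$. Since $\theta$ is compatible with $\ra$ and $\s$, we get $(x\ra y,\, y\ra y)\in\theta$, and by \ref{imp1}) of Definition \ref{generalised} we have $y\ra y = 1$, so $x\ra y \in 1/\theta$. Symmetrically $y\ra x\in 1/\theta$. Applying $\s$, from $(x,y)\in\theta$ we get $(\s x,\s y)\in\theta$, and the same argument yields $\s y\ra \s x\in 1/\theta$ and $\s x\ra \s y\in 1/\theta$. This handles all four conditions.

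The harder direction is the converse: assuming $x\ra y,\ \s y\ra\s x,\ y\ra x,\ \s x\ra\s y\in 1/\theta$, deduce $(x,y)\in\theta$. The guiding idea is the representation of Theorem \ref{rept}, or more directly item 5 of Proposition \ref{pvi}, which says $x\ra y=1$ forces $x = x\we(\s x\vee y)$; one expects a ``congruence version'' of the computation in item 6 of that proposition. Concretely, since $x\ra y\in 1/\theta$ means $(x\ra y,1)\in\theta$, compatibility of $\theta$ with $\we$ applied to the identity of item 5 (now read modulo $\theta$) should give $x\,\theta\, x\we(\s x\vee y)$. The plan is to mimic the proof of item 6 of Proposition \ref{pvi} throughout, replacing each exact equality by a congruence relation: from the four hypotheses we obtain
\begin{align*}
x &\ \theta\ (x\we\s x)\vee(x\we y),\\
\s y &\ \theta\ (\s y\we y)\vee(\s y\we\s x),
\end{align*}
where the second line comes from applying item 5 to $\s y\ra\s x\in 1/\theta$. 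Then, exactly as in that proof, distributivity (valid in the Kleene reduct) yields $y\ \theta\ (y\vee\s y)\we(x\vee y)$, and combining these one shows $x\ \theta\ x\we(x\vee y)\ \theta\ (y\vee\s y)\we(x\vee y)\ \theta\ y$. Each step is a substitution of $\theta$-related terms into a fixed lattice term, which is legitimate precisely because $\theta$ is a congruence.

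The main obstacle will be carrying out this substitution cleanly: in Proposition \ref{pvi} item 6 the chain of equalities is driven by two hypotheses ($x\ra y=1$ and $\s y\ra\s x=1$), whereas here all four hypotheses are needed to make every intermediate $\theta$-equivalence go through in both the $x$-to-$y$ and the symmetric directions, and one must be careful that the Kleene-algebra identities (Ne1--Ne3) used to rewrite terms like $(x\we\s x)\vee(x\we y)$ are genuine identities so that they hold modulo any congruence. Once the identity-level argument of item 6 is transcribed with ``$=$'' replaced by ``$\theta$'', the conclusion $(x,y)\in\theta$ follows.
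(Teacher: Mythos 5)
Your overall route coincides with the paper's: the forward direction via compatibility of $\theta$ with $\ra$ and $\s$ together with $y\ra y=1$, and the converse by transcribing the computation of item 6 of Proposition \ref{pvi} modulo $\theta$. The two congruences you extract, $x\;\theta\;(x\we\s x)\vee(x\we y)$ and $y\;\theta\;(y\vee\s y)\we(x\vee y)$, are exactly the ones the paper uses.

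However, your closing chain is not justified as written. The middle link, $x\we(x\vee y)\;\theta\;(y\vee\s y)\we(x\vee y)$, is not a substitution instance of any congruence you have derived: to obtain it by substituting into the term $u\we(x\vee y)$ you would need $x\;\theta\;(y\vee\s y)$, which does not follow from the four hypotheses and is false in general (take $x=y=0$ and $\theta$ the identity congruence in a nontrivial algebra; it would assert $0=0\vee\s 0=1$). Worse, since $x\we(x\vee y)$ collapses to $x$ and $(y\vee\s y)\we(x\vee y)$ is $\theta$-related to $y$, that middle link \emph{is} the statement $(x,y)\in\theta$ you are trying to prove, so the chain is circular at that point. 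The repair is the one item 6 itself dictates and the paper carries out: from $x\;\theta\;(x\we\s x)\vee(x\we y)$, the Kleene inequality Ne3 ($x\we\s x\le y\vee\s y$) and distributivity give only an inequality modulo $\theta$, namely $(x, x\we y)\in\theta$ (i.e.\ $x/\theta\le y/\theta$); the remaining two hypotheses $y\ra x,\ \s x\ra\s y\in 1/\theta$ give $(y, x\we y)\in\theta$ by the symmetric computation, and transitivity of $\theta$ then yields $(x,y)\in\theta$. You do remark that all four hypotheses and both directions are needed, so the right idea is present, but the argument must keep ``inequality modulo $\theta$'' and ``congruence'' separate instead of being written as a single chain of $\theta$-equivalences each claimed to be a substitution step.
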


\begin{proof}
Let $\theta \in \Con(T)$ and $x,y \in T$. It is immediate that if $(x,y)\in \theta$ then
$x\ra y$, $\s y \ra \s x$, $y\ra x$, $\s x \ra \s y \in 1/\theta$.
Conversely, assume that $x\ra y$, $\s y \ra \s x$, $y\ra x$, $\s x \ra \s y \in 1/\theta$.
Following a similar reasoning than the one employed in item 6. of Proposition \ref{pvi} it can be proved that $(x, (x\we\s x)\vee (x\we y))\in \theta$
and $(y,(y\vee \s y) \we (x\vee y))\in \theta$. Taking into account the inequality $x\we \s x \leq y \vee \s y$ and
the distributivity of the underlying lattice of $A$ we get $(x\we y,x)\in \theta$. Similarly it can be showed that $(y\we x,y)\in \theta$.
Therefore, $(x,y)\in\theta$.
\end{proof}

\begin{lemma} \label{lc3}
Let $T\in \SNA$ and $\theta,\psi \in \Con(T)$. Then $\theta\subseteq \psi$ if and only if
$1/\theta \subseteq 1/\psi$. In particular, $\theta = \psi$ if and only if $1/\theta = 1/\psi$.
\end{lemma}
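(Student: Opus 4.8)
The plan is to prove Lemma \ref{lc3} by exploiting Lemma \ref{lc2}, which reduces membership in a congruence entirely to membership of four implication-terms in the class $1/\theta$. The forward direction is trivial: if $\theta \subseteq \psi$ then in particular $1/\theta \subseteq 1/\psi$, since $(1,z)\in\theta$ immediately gives $(1,z)\in\psi$. So the whole content lies in the converse, and the strategy is to use the characterization of $\theta$ in terms of $1/\theta$ to transfer information from one congruence to the other.

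For the converse, suppose $1/\theta \subseteq 1/\psi$. I would take an arbitrary pair $(x,y)\in\theta$ and aim to show $(x,y)\in\psi$. By Lemma \ref{lc2} applied to $\theta$, the hypothesis $(x,y)\in\theta$ is equivalent to the four elements $x\ra y$, $\s y\ra\s x$, $y\ra x$, $\s x\ra \s y$ all lying in $1/\theta$. Using the assumed inclusion $1/\theta\subseteq 1/\psi$, each of these four elements therefore lies in $1/\psi$ as well. Now I would apply Lemma \ref{lc2} in the reverse direction, this time to the congruence $\psi$: since $x\ra y$, $\s y\ra\s x$, $y\ra x$, $\s x\ra\s y \in 1/\psi$, the lemma yields $(x,y)\in\psi$. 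This establishes $\theta\subseteq\psi$. The final claim, that $\theta=\psi$ iff $1/\theta=1/\psi$, follows by applying the just-proved equivalence in both directions and using antisymmetry of set inclusion.

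The proof is essentially a clean two-line argument once Lemma \ref{lc2} is in hand, so there is no substantial obstacle; the only point requiring a little care is recognizing that Lemma \ref{lc2} must be invoked twice with opposite orientations — as a necessary condition for $\theta$ (turning the pair into four class-memberships) and as a sufficient condition for $\psi$ (turning four class-memberships back into a pair). The key conceptual content is simply that Lemma \ref{lc2} renders any congruence on a subresiduated Nelson algebra completely determined by its $1$-class, which is exactly the kind of "congruences are governed by a distinguished filter'' phenomenon familiar from Heyting and Nelson algebras; Lemma \ref{lc3} is the first formal expression of that determinacy and the foundation for the subsequent isomorphism with open implicative filters.
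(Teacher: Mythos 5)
Your proof is correct and is exactly the argument the paper intends: the paper's proof states that the converse "follows from a direct computation based in Lemma \ref{lc2}", and your two applications of that lemma (once to $\theta$ as a necessary condition, once to $\psi$ as a sufficient condition) are precisely that computation, spelled out.
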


\begin{proof}
Let $\theta,\psi \in \Con(T)$. It is immediate that if $\theta\subseteq \psi$ then
$1/\theta \subseteq 1/\psi$. The converse follows from a direct computation based in Lemma \ref{lc2}.
\end{proof}

\begin{lemma} \label{lc4}
Let $T\in \SNA$ and $F\subseteq T$. If $F$ is an open implicative filter then $F$ is an open filter.
\end{lemma}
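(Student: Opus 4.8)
The plan is to observe that both the notion of open filter and that of open implicative filter impose the same openness condition, namely $1\ra x\in F$ whenever $x\in F$. Consequently, to prove that an open implicative filter $F$ is an open filter it suffices to show that $F$ is a filter in the ordinary sense: that $1\in F$, that $F$ is an upset, and that $F$ is closed under binary meets. The membership $1\in F$ is part of the definition of an implicative filter, so the real work reduces to the upset condition and closure under $\we$.

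First I would verify that $F$ is an upset. Suppose $x\leq y$ with $x\in F$. By item 3 of Proposition \ref{pvi}, the inequality $x\leq y$ yields $x\ra y=1$, and since $1\in F$ we obtain $x\ra y\in F$. As both $x\in F$ and $x\ra y\in F$, the defining property of an implicative filter forces $y\in F$, so $F$ is upward closed.

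The closure under $\we$ is where the openness hypothesis becomes essential, and I regard it as the crux of the argument. Let $x,y\in F$. Since $F$ is open, $\sq y = 1\ra y\in F$. By item 4 of Lemma \ref{lc1b} we have the inequality $\sq y\leq x\ra (x\we y)$, so the upset property established above gives $x\ra (x\we y)\in F$. Applying the implicative filter property to $x\in F$ and $x\ra (x\we y)\in F$ then yields $x\we y\in F$. Hence $F$ is closed under meets.

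Combining these facts, $F$ is a filter, and since it already satisfies the openness condition it is an open filter, as required. The only subtle point is the closure under $\we$: a naive attempt using the identity $x\ra(x\we y) = x\ra y$ would require $x\ra y\in F$, which is not directly available; it is precisely the openness of $F$ together with Lemma \ref{lc1b}(4) that supplies the membership $x\ra(x\we y)\in F$ needed to complete the step.
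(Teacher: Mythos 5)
Your proposal is correct and follows essentially the same route as the paper: the upset property via item 3 of Proposition \ref{pvi}, and closure under $\we$ via openness, the inequality $\sq y \leq x\ra(x\we y)$ from Lemma \ref{lc1b}, and the implicative filter property. The concluding remark about why the naive identity-based attempt fails is accurate but not needed; the core argument matches the paper's proof step for step.
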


\begin{proof}
Let $F$ be an open implicative filter. In order to show that $F$ is an upset, let $x,y\in T$ be such that
$x\in F$ and $x\leq y$. Then $x\ra y = 1 \in F$, so $y\in F$. Hence, $F$ is an upset.
Finally, let $x,y\in F$. We will see that $x\we y \in F$. Since $F$ is open then $1\ra y \in F$.
From \ref{lc1b}, we get that $1\ra y\leq x \ra (x\we y)$, so $x\ra (x\we y) \in F$. Using that $x\in F$ we obtain that
$x\we y \in F$. Therefore, $F$ is an open filter.
\end{proof}

Let $T\in \SNA$ and $F$ an implicative filter of $T$. For every $x,y \in T$ we define
$s(x,y) = (x\ra y)\we (y\ra x) \we (\s x \ra \s y) \we (\s y \ra \s x)$. We also define the set
\[
\Theta(F) = \{(x,y)\in T\times T:s(x,y)\in F\}.
\]
Note that $s(x,y)\in F$ if and only if $x\ra y, y\ra x, \s x \ra \s y, \s y \ra \s x \in F$.

\begin{lemma}\label{lc5}
Let $T\in \SNA$ and $F$ be an open implicative filter. Then $\Theta(F) \in \Con(T)$.
\end{lemma}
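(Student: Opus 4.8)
The plan is to verify directly that $\Theta(F)$ is an equivalence relation compatible with all the operations, the whole argument resting on one workhorse fact that I would isolate first: for $a,b,c\in T$, if $a\ra b\in F$ and $b\ra c\in F$, then $a\ra c\in F$. This holds because $F$ is an open filter (Lemma \ref{lc4}), hence closed under $\we$, so $(a\ra b)\we(b\ra c)\in F$; condition \ref{trans}) of Definition \ref{generalised} gives $((a\ra b)\we(b\ra c))\ra(a\ra c)=1\in F$; and since $F$ is an implicative filter, modus ponens yields $a\ra c\in F$. With this in hand, reflexivity ($s(x,x)=1\in F$ by \ref{imp1})) and symmetry ($s$ is symmetric in its arguments) are immediate, and transitivity of $\Theta(F)$ follows by applying the workhorse to each of the four implications defining $s$.

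For compatibility I would first record a handful of monotonicity inequalities valid in every $T\in\SNA$ and then exploit that $F$, being an open filter, is an upset. Using conditions \ref{supremo}), \ref{infimo}) together with Proposition \ref{pvi}(2) one gets $a\ra b\leq(a\we c)\ra(b\we c)$ and $a\ra b\leq(a\vee c)\ra(b\vee c)$; using condition \ref{impl2}) and Lemma \ref{lc1b}(1) one gets $a\ra b\leq(c\ra a)\ra(c\ra b)$. Compatibility with $\we$ and $\vee$ then follows at once: from $s(x,y)\in F$ the positive implications are pushed into $F$ by the first two inequalities, while the negated ones reduce, via the Kleene laws $\s(x\we z)=\s x\vee\s z$ and $\s(x\vee z)=\s x\we\s z$, to the same two inequalities applied to $\s x,\s y$. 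Compatibility with $\s$ is trivial because $s(\s x,\s y)=s(x,y)$. The right-hand compatibility of $\ra$, namely $(z\ra x,z\ra y)\in\Theta(F)$, is handled the same way: the positive parts come from the third inequality applied to $x\ra y$ and $y\ra x$.

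The genuinely delicate point, and the one I expect to be the main obstacle, is the left-hand compatibility of $\ra$, i.e. $(x\ra z,y\ra z)\in\Theta(F)$. Here the naive monotonicity inequality $y\ra x\leq(x\ra z)\ra(y\ra z)$ is \emph{false}, so one cannot simply invoke the upset property. Instead I would prove the identity
\[
(y\ra x)\ra\big((x\ra z)\ra(y\ra z)\big)=1
\]
and then apply modus ponens against $y\ra x\in F$. To establish this identity I would use the representation $\rho_T:T\hookrightarrow\K(T/\co)$ of Theorem \ref{rept}: by Remark \ref{fc} the outer implication equals the top element iff the first coordinate of $y\ra x$ is below the first coordinate of $(x\ra z)\ra(y\ra z)$, which reduces to the sr-lattice inequality $c\ra a\leq(a\ra e)\ra(c\ra e)$; the latter holds because $c\ra a\in D$ and $(c\ra a)\we(a\ra e)\leq c\ra e$ by transitivity in $\SRL$. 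The symmetric identity handles $(y\ra z)\ra(x\ra z)\in F$.

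It remains to push the negated implications $\s(x\ra z)\ra\s(y\ra z)$ (and its companions for the right-hand case) into $F$, and for this I would use the bridge identities \ref{au}) and \ref{ultima}), which give $\s(u\ra v)\ra(u\we\s v)=1$ and $(u\we\s v)\ra\s(u\ra v)=1$. Combining these with the meet-inequality above yields the chain
\[
\s(x\ra z)\ra(x\we\s z),\quad (x\we\s z)\ra(y\we\s z),\quad (y\we\s z)\ra\s(y\ra z),
\]
each member of which lies in $F$ (the outer two by \ref{au}), \ref{ultima}), the middle one because $x\ra y\leq(x\we\s z)\ra(y\we\s z)$ and $x\ra y\in F$). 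Two applications of the workhorse then give $\s(x\ra z)\ra\s(y\ra z)\in F$, and the three remaining negated implications follow symmetrically. Assembling all four implications shows $s(x\ra z,y\ra z)\in F$, which completes the verification that $\Theta(F)\in\Con(T)$.
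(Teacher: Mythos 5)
Your proof is correct, and its skeleton is the same as the paper's: the same workhorse (closure of $F$ under the rule ``from $a\ra b\in F$ and $b\ra c\in F$ infer $a\ra c\in F$'', obtained from Lemma \ref{lc4}, condition \ref{trans}) and the implicative-filter property), the same monotonicity inequalities plus the upset property for compatibility with $\we$, $\vee$, $\s$ and the positive half of $z\ra(\cdot)$, and the same three-link chain through \ref{au}) and \ref{ultima}) for all the negated implications. Where you genuinely diverge is precisely the step you flag as delicate, the positive half of $(x\ra z,y\ra z)\in\Theta(F)$, and there the comparison favors you: the paper derives $(y\ra z)\ra((x\ra z)\ra(y\ra z))=1$ from Lemma \ref{lc1b} and then concludes by modus ponens ``since $y\ra z\in F$'' --- but $y\ra z\in F$ is not among the hypotheses (only $x\ra y$, $y\ra x$, $\s x\ra\s y$, $\s y\ra\s x$ lie in $F$), so the paper's text at this point contains a gap, presumably a typo for an argument involving $y\ra x$. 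Your identity $(y\ra x)\ra((x\ra z)\ra(y\ra z))=1$ is exactly what is needed; your proof of it through Theorem \ref{rept}, Remark \ref{fc} and transitivity in $\SRL$ is sound and non-circular, since this is the very device the paper uses to prove Lemma \ref{lc1b}; and modus ponens against $y\ra x\in F$ then finishes the step. For the record, your identity also has a purely syntactic proof in the style the paper apparently intended: $x\ra z\leq y\ra(x\ra z)\leq(y\ra x)\ra(y\ra z)$ by \ref{impl2}) and item 1) of Lemma \ref{lc1b}, hence $(x\ra z)\ra((y\ra x)\ra(y\ra z))=1$ by Proposition \ref{pvi}, and commuting the two boxed antecedents with item 2) of Lemma \ref{lc1b} yields $(y\ra x)\ra((x\ra z)\ra(y\ra z))=1$. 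So the two routes buy the same thing: yours is quicker to verify because it reduces to one known inequality in $\SRL$, while the syntactic one stays inside the equational calculus of $\SNA$ and, suitably inserted, repairs the paper's own argument.
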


\begin{proof}
It is immediate that $\Theta(F)$ is reflexive and symmetric. In order to show that it is transitive,
let $(x,y), (y,z)\in \Theta(F)$. Since it follows from Lemma \ref{lc4} that $F$ is a filter and $x\ra y, y\ra z \in F$ then
$(x\ra y)\we (y\ra z) \in F$. But $((x\ra y)\we (y\ra z))\ra (x\ra z) = 1\in F$, so $x\ra z \in F$. In a similar way it can be proved that 
$z\ra x, \s z \ra \s x, \s x \ra \s z \in F$. Thus, $(x,z) \in \Theta(F)$. Hence, $\Theta(F)$ is an equivalence relation.

Now we will show that $\Theta(F)$ is a congruence. 
Let $x,y,z\in T$ be such that $(x,y) \in \Theta(F)$. First we will show that $(x\vee z, y\vee z) \in \Theta(F)$.
Note that $(x\vee z) \ra (y\vee z) = (x\ra (y\vee z))\we (z\ra (y\vee z)) = x\ra (y\vee z) \geq x\ra y$.
Since $x\ra y \in F$ and $F$ is an upset we get $(x\vee z) \ra (y\vee z)\in F$. In a similar way it can be proved 
that $(y\vee z) \ra (x\vee z)\in F$.
Besides, note that since 
\[
\s (x\vee z) \ra \s (y\vee z) = (\s x \we \s z) \ra (\s y \we \s z) = (\s x \we \s z) \ra \s y,
\]
and $(\s x \we \s z) \ra \s y \geq \s x \ra \s y$ then $\s x \ra \s y \leq \s (x\vee z) \ra \s (y\vee z)$.
But $\s x \ra \s y \in F$, so $\s (x\vee z) \ra \s (y\vee z)\in F$. Analogously it can be showed that
$\s (y\vee z) \ra \s (x\vee z)\in F$. Thus,  $(x\vee z, y\vee z) \in \Theta(F)$. A similar argument
proves that  $(x\we z, y\we z) \in \Theta(F)$. It is also immediate that $(\s x, \s y)\in \Theta(F)$.
We have proved that $\Theta(F)$ preserves the operations $\vee$, $\we$ and $\s$.

Following this, we will see that $\Theta(F)$ preserves $\ra$. In order to show it,
let $x,y,z\in T$ such that $(x,y)\in \Theta(F)$. First we will prove that $(z\ra x,z\ra y)\in \Theta(F)$. 
Since $x\ra y \leq z\ra (x\ra y)$ and $x\ra y\in F$ then $z\ra (x\ra y)\in F$. It follows from Lemma \ref{lc1b}
that $z\ra (x\ra y) \leq (z\ra x) \ra (z\ra y)$, so $(z\ra x) \ra (z\ra y) \in F$. In a similar way we can show
that $(z\ra y) \ra (z\ra x) \in F$. Now we will prove that $\s (x\ra z) \ra \s (y\ra z) \in F$. 
First note that since $\s y \ra \s x \leq (\s y \we z) \ra (\s x \we z)$
and $\s y \ra \s x \in F$ then $(\s y \we z) \ra (\s x \we z) \in F$.
We also have that $(z\we \s x) \ra \s (z\ra x) = 1$. Taking into account that
\[
(((\s y \we z)\ra (\s x \we z)) \we ((\s x\we z) \ra \s (z\ra x))) \ra ((\s y \we z) \ra \s (z\ra x)) = 1
\]
we get 
\[
((\s y \we z)\ra (\s x \we z)) \ra ((\s y \we z) \ra \s (z\ra x)) = 1.
\]
But $(\s y \we z) \ra (\s x \we z) \in F$ and $1\in F$, so 
\begin{equation}\label{eq-i1}
(\s y \we z) \ra \s (z\ra x) \in F.
\end{equation}
Besides, since 
\begin{multline*}
    ((\s (z\ra y) \ra (\s y\we z)) \we ((\s y \we z)\ra \s (z\ra x)))\ra\\ (\s (z\ra y) \ra \s (z\ra x)) = 1
\end{multline*}

and $\s (z\ra y) \ra (\s y\we z) = 1$,
so
\[
((\s y \we z)\ra \s (z\ra x))\ra (\s (z\ra y) \ra \s (z\ra x)) = 1 \in F.
\]
Then it follows from (\ref{eq-i1}) that $\s (z\ra y) \ra \s (z\ra x) \in F$.
In a similar way we can see that $\s (z\ra x) \ra \s (z\ra y) \in F$.
Hence, $(z\ra x,z\ra y)\in \Theta(F)$. Finally we will show that
$(x\ra z,y\ra z)\in \Theta(F)$. It follows from Lemma \ref{lc1b} that
\[
(y\ra z) \ra ((x\ra z) \ra (y\ra z)) = (x\ra z) \ra ((y\ra z) \ra (y\ra z)) = 1 \in F. 
\]
Thus, since $y\ra z\in F$ then $(x\ra z) \ra (y\ra z)\in F$. Analogously,
$(y\ra z) \ra (x\ra z) \in F$. Now we will see that $\s (x\ra z) \ra \s (y\ra z) \in F$.
Note that
\begin{multline*}
((\s (x\ra z)\ra (\s z\we x)) \we ((\s z \we x)\ra \s (y\ra z)))\ra\\ (\s (x\ra z) \ra \s (y\ra z)) = 1.    
\end{multline*}

But $\s (x\ra z)\ra (\s z\we x) = 1$, so
\[
((\s z \we x)\ra \s (y\ra z))\ra (\s (x\ra z) \ra \s (y\ra z)) = 1.
\]
Since $1\in F$ and $F$ is a filter, in order to show that $\s (x\ra z) \ra \s (y\ra z) \in F$ it is enough
to see that $(\s z \we x)\ra \s (y\ra z)\in F$. Since $x\ra y \leq (x\we \s z) \ra (y\we \s z)$ and $x\ra y\in F$
then 
\begin{equation}\label{eq-c2}
(x\we \s z) \ra (y\we \s z)\in F. 
\end{equation}
Besides, since 
$(((\s z\we x)\ra (y\we \s z))\we ((y\we \s z)\ra \s (y\ra z))) \ra ((\s z \we x)\ra \s (y\ra z)) = 1$
and $(y\we \s z)\ra \s (y\ra z) = 1$ then
\[
((\s z\we x)\ra (y\we \s z))) \ra ((\s z \we x)\ra \s (y\ra z)) = 1.
\]
Since $1\in F$ then it follows from (\ref{eq-c2}) that $(\s z \we x)\ra \s (y\ra z)\in F$, which was our aim.
Then $\s (x\ra z) \ra \s (y\ra z) \in F$. Analogously, $\s (y\ra z) \ra \s (x\ra z) \in F$.
 Hence, $\Theta(F)$ preserves $\ra$.
\end{proof}

For $T\in \SNA$ we write $\oIF(T)$ to denote the set of open implicative filters of $T$.

\begin{theorem}\label{thm-con}
Let $T\in \SNA$. The assignments $\theta \mapsto 1/\theta$ and $F\mapsto \Theta(F)$ establish an
order isomorphism between $\Con(T)$ and $\oIF(T)$. 
\end{theorem}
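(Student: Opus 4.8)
The plan is to exhibit $\theta\mapsto 1/\theta$ and $F\mapsto\Theta(F)$ as a pair of mutually inverse, order-preserving maps between the posets $(\Con(T),\subseteq)$ and $(\oIF(T),\subseteq)$. The first task is to check that $\theta\mapsto 1/\theta$ actually lands in $\oIF(T)$. That $1/\theta$ is open is immediate: if $x\,\theta\,1$ then, applying the congruence, $\sq x=1\ra x\;\theta\;1\ra 1=1$, so $\sq x\in 1/\theta$. The substantive point is that $1/\theta$ is an implicative filter. Suppose $x\in 1/\theta$ and $x\ra y\in 1/\theta$. Then $x\we(x\ra y)\,\theta\,1$, and by condition \ref{inf}) of Definition \ref{generalised} we have $x\we(x\ra y)\leq x\we(\s x\vee y)\leq x$. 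Since both ends of this chain lie in $1/\theta$, the lattice-congruence sandwich (if $a\leq c\leq b$ and $a\,\theta\,b$ then $c\,\theta\,a$) forces $x\we(\s x\vee y)\,\theta\,1$. Distributing and using $\s x\,\theta\,0$ (hence $x\we\s x\,\theta\,0$) I would obtain $x\we y\,\theta\,x\we(\s x\vee y)\,\theta\,1$, and then a second sandwich $x\we y\leq y\leq 1$ yields $y\,\theta\,1$. So $y\in 1/\theta$ and $1/\theta\in\oIF(T)$.

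That $F\mapsto\Theta(F)$ lands in $\Con(T)$ is exactly Lemma \ref{lc5}, so nothing new is needed there. It then remains to verify that the two assignments are inverse to one another. For $\Theta(1/\theta)=\theta$ I would argue directly: by the remark following the definition of $s$, the pair $(x,y)$ belongs to $\Theta(1/\theta)$ precisely when $x\ra y,\,y\ra x,\,\s x\ra\s y,\,\s y\ra\s x\in 1/\theta$, and by Lemma \ref{lc2} this is equivalent to $(x,y)\in\theta$.

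For the other composite $1/\Theta(F)=F$, I would compute the witness $s(x,1)$. Using $x\ra 1=1$, $1\ra x=\sq x$, $\s 1\ra\s x=0\ra\s x=1$ and $\s x\ra\s 1=\s x\ra 0$, this simplifies to $s(x,1)=\sq x\we(\s x\ra 0)$. If $x\in F$, then openness gives $\sq x\in F$; moreover $\sq x\leq\s x\ra 0$ by Lemma \ref{lc1b}(5), and since $F$ is an upset (Lemma \ref{lc4}) and closed under $\we$, we get $s(x,1)\in F$, i.e.\ $x\in 1/\Theta(F)$. Conversely, if $s(x,1)\in F$, then from $\sq x\we(\s x\ra 0)\leq\sq x\leq x$ (the last step by item~1 of Proposition \ref{pvi}) and the upset property we recover $x\in F$. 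Hence $1/\Theta(F)=F$.

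Finally, to upgrade the bijection to an order isomorphism it suffices to note that both maps are monotone: $\theta\subseteq\psi$ trivially implies $1/\theta\subseteq 1/\psi$, and $F\subseteq G$ trivially implies $\Theta(F)\subseteq\Theta(G)$ from the definition of $\Theta$; since they are mutually inverse, each is automatically order-reflecting (this is also recorded, for the first map, in Lemma \ref{lc3}). I expect the only genuinely delicate step to be the verification that $1/\theta$ is an implicative filter, where the distributive-lattice sandwich argument combined with condition \ref{inf}) does the real work; the two inverse computations are routine once the identity $s(x,1)=\sq x\we(\s x\ra 0)$ and Lemmas \ref{lc1b}, \ref{lc2}, \ref{lc4} are in hand.
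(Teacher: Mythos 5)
Your proposal is correct and follows essentially the same route as the paper: it rests on Lemma \ref{lc5} for $\Theta(F)\in\Con(T)$, on Lemma \ref{lc2} to see that $\Theta(1/\theta)=\theta$, and on the computation $s(x,1)=\sq x\we(\s x\ra 0)$ together with Lemmas \ref{lc4} and \ref{lc1b} to obtain $1/\Theta(F)=F$. The only local deviation is your check that $1/\theta$ is an implicative filter, where you combine condition \ref{inf}) of Definition \ref{generalised} with compatibility of $\theta$ with $\s$, distributivity, and a sandwich argument, while the paper instead applies the congruence to $\ra$ and uses $(x\we(x\ra y))\ra y=1$ from item 4 of Proposition \ref{pvi}; both arguments are sound.
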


\begin{proof}
Let $H:\Con(T)\ra \oIF(T)$ be the function given by $H(\theta) = 1/\theta$. In order to show that $H$ is a well 
defined map, let $\theta \in \Con(T)$. In particular, $1\in 1/\theta$. Let $x, x\ra y \in 1/\theta$,
so $x\we (x\ra y) \in 1/\theta$. Since $(x\we (x\ra y))\ra y = 1$ then $1\ra y \in 1/\theta$. But $1\ra y \leq y$,
and $(y\we (1\ra y),y)\in \theta$, so $(1\ra y,y)\in \theta$. Thus, $y\in 1/\theta$. Hence, $1/\theta$ is an implicative filter.
The fact that $1/\theta$ is open is immediate, so $1/\theta \in \oIF(A)$. Hence, $H$ is a well defined map.
The injectivity of $H$ follows from Lemma \ref{lc2}. In order to show that $H$ is suryective, let $F\in \oIF(T)$.
Then it follows from Lemma \ref{lc5} that $\Theta(F)\in \Con(A)$. We will show that $H(\Theta(F)) = F$, i.e.,
$1/\Theta(F) = F$. In order to prove it, let $x\in 1/\Theta(F)$, i.e., $(x,1)\in \Theta(F)$. In particular, $1\ra x\in F$.
But $1\ra x\leq x$, so $x\in F$. Conversely, let $x\in F$. In particular, $x \ra 1 = 1\in F$. Besides, since $F$
is open then $1\ra x \in F$. We also have that $\s 1 \ra \s x = 1 \in F$. Finally, it follows from 
Lemma \ref{lc1b} that $1\ra x \leq  \s x \ra 0 = \s x \ra \s 1$, so $\s x \ra \s 1\in F$. Then $x\in 1/\Theta(F)$.
Thus, $1/\Theta(F) = F$. We have proved that $H$ is a suryective map. Hence, $H$ is a bijective function.
Therefore, it follows from Lemma \ref{lc2} that $H$ is an order isomorphism.
\end{proof}

Let $T\in \SNA$ and $X\subseteq T$. We write $\langle X \rangle$ in order to indicate the open implicative filter generated by $X$,
i.e., the least open implicative filter (with respect to the inclusion) which contains the set $X$.
In other words, $\langle X \rangle$ is the intersection of all the open implicative filters that contain $X$.

\begin{lemma}\label{ac1}
Let $T\in \SNA$ and $X$ a non empty subset of $T$. Then
\[
\langle X \rangle = \{x\in T:\sq(x_1\we\cdots \we x_n)\ra x = 1\;\text{for some}\;x_1,\ldots,x_n\in X\}.
\]
\end{lemma}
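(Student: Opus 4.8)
Throughout, write $S$ for the set on the right-hand side of the claimed identity, and recall that $\sq z = 1\ra z$. The plan is the usual two-inclusion argument. First I would check that $S$ is itself an open implicative filter with $X\subseteq S$, so that minimality of $\langle X\rangle$ gives $\langle X\rangle\subseteq S$. Then I would check that $S\subseteq F$ for every open implicative filter $F$ with $X\subseteq F$, and apply this with $F=\langle X\rangle$ to obtain $S\subseteq\langle X\rangle$.

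The second inclusion is the quick one. Let $F$ be an open implicative filter containing $X$ and let $x\in S$, witnessed by $x_1,\dots,x_n\in X$ with $\sq(x_1\we\cdots\we x_n)\ra x=1$. By Lemma \ref{lc4}, $F$ is an open filter; hence $c:=x_1\we\cdots\we x_n\in F$ (as $F$ is closed under $\we$) and $\sq c\in F$ (as $F$ is open). Since $\sq c\ra x=1\in F$ and $F$ is an implicative filter, we conclude $x\in F$. This gives $S\subseteq\langle X\rangle$.

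For the first inclusion I must verify the three defining clauses of an open implicative filter. Containment $X\subseteq S$ follows by taking $n=1$: for $x\in X$ we have $\sq x\leq x$ by item~1 of Proposition \ref{pvi}, whence $\sq x\ra x=1$ by item~3; and $1\in S$ since $z\ra 1=1$ for all $z$. For implicative closure, suppose $x\in S$ and $x\ra y\in S$, with witnessing meets $c,d$ of elements of $X$. Put $e=c\we d$, again a meet of elements of $X$. Using condition \ref{infimo}) of Definition \ref{generalised} with first argument $1$ we get $\sq e=\sq c\we\sq d\leq\sq c,\sq d$, so item~2 of Proposition \ref{pvi} (antitonicity in the first argument) yields $\sq e\ra x=1$ and $\sq e\ra(x\ra y)=1$. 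By item~1 of Lemma \ref{lc1b}, $\sq e\ra(x\ra y)\leq(\sq e\ra x)\ra(\sq e\ra y)$, so $(\sq e\ra x)\ra(\sq e\ra y)=1$; substituting $\sq e\ra x=1$ and applying item~1 of Proposition \ref{pvi} gives $\sq e\ra y=1$, i.e. $y\in S$.

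It remains to prove openness, which I expect to be the main obstacle: given $x\in S$ with witness $c$, I claim the same $c$ witnesses $\sq x\in S$, that is $\sq c\ra\sq x=1$. This cannot be obtained by monotonicity alone, since the inequality $\sq c\ra x\leq\sq c\ra\sq x$ in fact fails. Instead I would invoke the representation of Theorem \ref{rept} and regard $T$ as a subalgebra of $\K(A)$ with $A\in\SRL$. Writing $c=(p,q)$ and $x=(r,s)$, a direct computation together with Remark \ref{fc} turns $\sq c\ra x=1$ into the condition $\sq p\leq r$ and turns $\sq c\ra\sq x=1$ into $\sq p\leq\sq r$ (in $A$). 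Since $\sq$ is monotone and idempotent on sr-lattices, $\sq p\leq r$ forces $\sq p=\sq\sq p\leq\sq r$, which is exactly what is needed; equivalently one may argue inside the quotient sr-lattice $T/\co$ via Lemma \ref{l2}. Hence $\sq x\in S$, so $S$ is an open implicative filter containing $X$, giving $\langle X\rangle\subseteq S$ and completing the proof.
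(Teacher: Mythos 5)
Your proposal is correct, and its skeleton is the same as the paper's: show that the right-hand side $S$ is an open implicative filter, check it contains $X$, and then show $S\subseteq F$ for every open implicative filter $F\supseteq X$ (the paper phrases this last step exactly as you do, via Lemma \ref{lc4}). Your treatment of implicative closure — merging the two witnesses into a single meet $e$, using $\sq e\le\sq c,\sq d$, antitonicity, and Lemma \ref{lc1b}(1) — is essentially word-for-word the paper's argument. The only genuine divergence is the openness step: the paper handles it internally, writing $1=\sq(1)=\sq(\sq(x_1\we\cdots\we x_n)\ra x)\le\sq(x_1\we\cdots\we x_n)\ra\sq(x)$, whereas you pass through the representation of Theorem \ref{rept} and Remark \ref{fc} to reduce the claim to monotonicity and idempotence of $\sq$ in an sr-lattice. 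Your route is legitimate (the paper itself proves Lemma \ref{lc1b} by invoking Theorem \ref{rept}, so there is no circularity) and it makes the mechanism transparent; the paper's route is shorter and purely equational.

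One assertion you make is false, though it is not load-bearing: the inequality $\sq c\ra x\le\sq c\ra\sq x$ does \emph{not} fail — it holds in every subresiduated Nelson algebra, and it is precisely what the paper's openness step uses. To see it internally: by Lemma \ref{lc1b}(3), $\sq(\sq c\ra x)=\sq c\ra x$ and $\sq\sq c=\sq c$; instantiating Lemma \ref{lc1b}(1) with first variable $1$, second variable $\sq c$ and third variable $x$ gives $\sq(\sq c\ra x)\le(\sq\sq c)\ra\sq x=\sq c\ra\sq x$, hence $\sq c\ra x\le\sq c\ra\sq x$. Your own twist computation confirms this: with $c=(p,q)$ and $x=(r,s)$, both sides have the same second component $\sq p\we s$, and the first components satisfy $\sq p\ra r\le\sq p\ra\sq r$ in $A\in\SRL$, since $d:=\sq p\ra r$ lies in $D$, $d\we\sq p\in D$ and $d\we\sq p\le r$ force $d\we\sq p\le\sq r$, whence $d\le\sq p\ra\sq r$. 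So the parenthetical justification for avoiding the direct argument should be deleted; with it removed, your proof stands as written, and the paper's one-line derivation becomes available as a simplification of your openness step.
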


\begin{proof}
Let $S = \{x\in T:\sq(x_1\we\cdots \we x_n)\ra x = 1\;\text{for some}\;x_1,\ldots,x_n\in X\}$. We will show that
$S$ is an open implicative filter. It is immediate that $1\in S$. Let now $x, y\in T$ be such that
$x, x\ra y \in S$. Then there exist $x_1,\ldots,x_n,y_1,\ldots,y_m \in X$ such that
$\sq(x_1\we \cdots \we x_n) \ra x = 1$ and $\sq(y_1\we \cdots \we y_m) \ra (x\ra y) = 1$.
Let $z = x_1\we\cdots \we x_n\we y_1\we \cdots y_m$.
Since 
$\sq(z)\leq \sq(x_1\we\cdots \we x_n), \sq(y_1\we \cdots y_m)$  
then $\sq(z) \ra x = 1$ and $\sq(z) \ra (x\ra y) = 1$. Thus, it follows from Lemma \ref{lc1b}
that
\[
1 = \sq(z) \ra (x\ra y) \leq (\sq(z) \ra x)\ra (\sq(z)\ra y) = 1 \ra (\sq(z) \ra y) 
\]
so $\sq(z) \ra y =1 \ra (\sq(z) \ra y)= 1$. Hence, $y\in S$. We have proved that $S$ is an implicative filter.
In order to see that $S$ is open, let $x\in S$ so $\sq(x_1\we\cdots \we x_n)\ra x = 1$ for some $x_1,\ldots,x_n\in X$.
Then 
\[
1 = \sq(1) = \sq(\sq(x_1\we\cdots \we x_n)\ra x) \leq \sq(x_1\we \cdots \we x_n) \ra \sq(x), 
\]
so $\sq(x_1\we \cdots \we x_n) \ra \sq(x) = 1$. Hence, $S$ is an open implicative filter.
Finally, let $F$ be an open implicative filter such that $X\subseteq F$. A direct computation based in the fact that every open implicative 
filter is an open filter shows that $S\subseteq F$. Therefore, $\langle X \rangle = S$, which was our aim. 
\end{proof}

A straightforward computation based in Lemma \ref{ac1} proves the following result.  

\begin{corollary} \label{ac2}
Let $T\in \SNA$, $F\in \oIF(T)$ and $x\in T$. Then 
\[
\langle F \cup \{x\} \rangle = \{y\in T:(f\we \square(x))\ra y = 1\;\text{for some}\;f\in F\}.
\]
\end{corollary}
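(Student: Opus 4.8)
The plan is to apply Lemma \ref{ac1} directly to the set $X = F\cup\{x\}$, which is nonempty, and then reorganize the finite meets it produces into the shape $f\we\sq(x)$ demanded by the statement. Throughout I will freely use that $F$, being an open implicative filter, is in particular an open filter (Lemma \ref{lc4}), so it is closed under $\we$ and satisfies $\sq f\in F$ whenever $f\in F$; that $\sq$ distributes over $\we$, since $\sq(a\we b) = 1\ra(a\we b) = (1\ra a)\we(1\ra b) = \sq a\we\sq b$ by condition \ref{infimo}) of Definition \ref{generalised}; that $\sq a\leq a$ by item 1 of Proposition \ref{pvi}; and that $\ra$ is antitone in its first argument, together with the rule $a\leq b\Rightarrow a\ra b=1$, both from Proposition \ref{pvi}.

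For the inclusion $\supseteq$, suppose $(f\we\sq(x))\ra y = 1$ for some $f\in F$. Since $\sq f\leq f$ we have $\sq f\we\sq x\leq f\we\sq x$, and antitonicity gives $(\sq f\we\sq x)\ra y = 1$; rewriting $\sq f\we\sq x = \sq(f\we x)$, this reads $\sq(f\we x)\ra y = 1$ with $f,x\in X$, so $y\in\langle X\rangle$ by Lemma \ref{ac1}.

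For the inclusion $\subseteq$, take $y\in\langle X\rangle$. By Lemma \ref{ac1} there are $z_1,\dots,z_n\in F\cup\{x\}$ with $\sq(z_1\we\cdots\we z_n)\ra y = 1$. Let $f$ be the meet of those $z_i$ lying in $F$, with the empty meet understood as $1$; then $f\in F$ and, whether or not $x$ occurs among the $z_i$, we have $z_1\we\cdots\we z_n\geq f\we x$. Hence $\sq(z_1\we\cdots\we z_n)\geq\sq(f\we x) = \sq f\we\sq x$, and antitonicity of $\ra$ in its first argument turns the hypothesis into $(\sq f\we\sq x)\ra y = 1$. Setting $f':=\sq f\in F$ exhibits $y$ in the right-hand set, completing the proof.

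The only delicate point is the bookkeeping in the last paragraph: one must collapse the arbitrary finite meet produced by Lemma \ref{ac1} into the canonical form $f\we x$, handling uniformly the cases where $x$ does and does not occur among the $z_i$ (and where no $z_i$ lies in $F$), and then pass from $f$ to $\sq f$ so that the resulting witness again belongs to $F$. Everything else is a direct consequence of the distributivity of $\sq$ over $\we$ and the monotonicity properties recorded in Proposition \ref{pvi}.
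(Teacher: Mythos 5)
Your proof is correct and takes essentially the same route as the paper, which states only that the corollary follows by ``a straightforward computation based in Lemma \ref{ac1}''; your argument is exactly that computation, with the necessary bookkeeping (collapsing the finite meet from Lemma \ref{ac1} into the form $f\we\sq(x)$, and replacing $f$ by $\sq f\in F$ so the witness stays in $F$) made explicit and verified correctly in both inclusions.
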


Let $T\in \SNA$ and $x\in X$. We write $\langle x \rangle$ instead of $\langle \{x\} \rangle$. 

\begin{corollary}
Let $T\in \SNA$ and $x\in T$. Then $\langle x \rangle = \{y\in T: \square(x) \ra y = 1\}$.
\end{corollary}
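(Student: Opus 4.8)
The plan is to read this off directly from Lemma \ref{ac1}, specialized to the singleton set $X = \{x\}$. First I would invoke Lemma \ref{ac1} with $X=\{x\}$, which is a non-empty subset of $T$, to obtain
\[
\langle x \rangle = \langle \{x\}\rangle = \{y\in T:\sq(x_1\we\cdots \we x_n)\ra y = 1\;\text{for some}\;x_1,\ldots,x_n\in \{x\}\}.
\]

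Next I would observe that any choice of $x_1,\ldots,x_n\in\{x\}$ forces $x_i = x$ for every $i$, so that $x_1\we\cdots\we x_n = x$ by idempotency of $\we$ (together with the commutativity and associativity of the lattice meet). Substituting this into the condition above collapses the existential quantifier over $n$ and over $x_1,\ldots,x_n$: the defining condition $\sq(x_1\we\cdots\we x_n)\ra y = 1$ reduces to $\sq(x)\ra y = 1$. This yields $\langle x \rangle = \{y\in T:\sq(x)\ra y = 1\}$, which is exactly the claim.

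I do not anticipate any genuine obstacle here, since the statement is a direct instantiation of Lemma \ref{ac1}; the only point that warrants a word of justification is the reduction of the $n$-fold meet $x_1\we\cdots\we x_n$ to $x$, which is immediate from idempotency of $\we$ in the underlying bounded distributive lattice. One could alternatively prove the corollary from scratch by verifying that the right-hand side is an open implicative filter containing $x$ (invoking Lemma \ref{lc1b} for openness and the implicative-filter closure, much as in the proof of Lemma \ref{ac1}) and that it is contained in every open implicative filter containing $x$; but appealing to Lemma \ref{ac1} is considerably cleaner and avoids repeating that argument.
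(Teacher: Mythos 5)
Your proof is correct and follows exactly the route the paper intends: the corollary is stated without proof as an immediate instance of Lemma \ref{ac1} with $X=\{x\}$, where the meet $x_1\we\cdots\we x_n$ collapses to $x$ by idempotency. Nothing is missing.
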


Let $T \in \SNA$ and $x,y\in T$. For every $\theta \in \Con(T)$, it follows from Theorem \ref{thm-con} that $(x,y) \in \theta$
if and only if $(s(x,y),1) \in \theta$. Thus, $\SNA$ is a term variety where $(s(x,y),1)$ is a pair 
associated to $\SNA$.
\vspace{1pt}

Let $T\in \SNA$ and $x,y\in T$. We write $\theta(x,y)$ for the congruence generated by the pair $(x,y)$, i.e.,
the least congruence which contains the pair $(x,y)$ \cite{SM}.

\begin{proposition}
Let $T\in \SNA$ and $x,y,z,w\in T$. Then $(z,w)\in \theta(x,y)$ if and only if $s(x,y)\ra s(z,w) = 1$.
\end{proposition}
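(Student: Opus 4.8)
The plan is to reduce the statement to the congruence/open-implicative-filter correspondence of Theorem \ref{thm-con} and then to identify explicitly the open implicative filter that corresponds to $\theta(x,y)$. First I would invoke the remark recorded just before the statement: for every $\theta\in\Con(T)$ one has $(z,w)\in\theta$ if and only if $(s(z,w),1)\in\theta$, that is, if and only if $s(z,w)\in 1/\theta$. Writing $H(\theta)=1/\theta$ for the isomorphism of Theorem \ref{thm-con} and applying this with $\theta=\theta(x,y)$, the desired condition $(z,w)\in\theta(x,y)$ is exactly $s(z,w)\in H(\theta(x,y))$. So the whole problem collapses to computing the open implicative filter $H(\theta(x,y))$.

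The key intermediate step is to prove that $H(\theta(x,y))=\langle s(x,y)\rangle$. The point is that, by the very definition of $\Theta(F)$, an open implicative filter $F$ satisfies $(x,y)\in\Theta(F)$ precisely when $s(x,y)\in F$; since $H$ and $F\mapsto\Theta(F)$ are mutually inverse, a congruence $\theta$ contains the pair $(x,y)$ if and only if $s(x,y)\in H(\theta)$. Thus the congruences containing $(x,y)$ correspond under $H$ to exactly the open implicative filters containing $s(x,y)$. Since the meet in $\Con(T)$ and in $\oIF(T)$ is intersection in both cases, and an order isomorphism preserves arbitrary meets, I would then compute
\[
H(\theta(x,y)) = \bigcap\{H(\theta):\theta\in\Con(T),\,(x,y)\in\theta\} = \bigcap\{F\in\oIF(T):s(x,y)\in F\} = \langle s(x,y)\rangle,
\]
the last equality being the definition of the generated open implicative filter.

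Combining the two steps gives $(z,w)\in\theta(x,y)$ if and only if $s(z,w)\in\langle s(x,y)\rangle$, and by the corollary describing principal open implicative filters ($\langle x\rangle=\{y:\sq(x)\ra y=1\}$) this is equivalent to $\sq(s(x,y))\ra s(z,w)=1$. To finish I must remove the box, i.e. show $\sq(s(x,y))=s(x,y)$. For this I would use that $\sq$ distributes over $\we$ — which follows from $\sq a=1\ra a$ together with condition \ref{infimo}) of Definition \ref{generalised} — and that $\sq$ fixes every implication, which is part 3) of Lemma \ref{lc1b}. Since $s(x,y)$ is a meet of four terms, each of the form $u\ra v$, these two facts yield $\sq(s(x,y))=s(x,y)$, and the stated equivalence follows.

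The main obstacle I anticipate is the bookkeeping of the middle step: one must be sure that the matching between congruences above $(x,y)$ and open implicative filters above $s(x,y)$ is tight enough that their least elements correspond, which is precisely why I route the argument through the meet-preservation of the order isomorphism rather than attempting to track generators directly. The concluding box-removal is routine once distributivity of $\sq$ over $\we$ is established.
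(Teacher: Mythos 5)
Your proof is correct, but it reaches the key reduction by a different route than the paper. The paper's proof is essentially a two-line appeal to external machinery: having noted that $\SNA$ is a term variety with associated pair $(s(x,y),1)$, it cites Theorem 2.4 of the reference \cite{SM} on relative principal congruences in term quasivarieties to conclude at once that $(z,w)\in \theta(x,y)$ if and only if $s(z,w)\in \langle s(x,y)\rangle$, and then finishes exactly as you do, via the corollary $\langle x\rangle = \{y : \sq(x)\ra y = 1\}$ and the identity $\sq(s(x,y)) = s(x,y)$. You instead prove that reduction internally: using that $H(\theta)=1/\theta$ and $F\mapsto \Theta(F)$ are mutually inverse (Theorem \ref{thm-con}), you observe that the congruences containing $(x,y)$ correspond exactly to the open implicative filters containing $s(x,y)$, so the least such congruence $\theta(x,y)$ must correspond to the least such filter $\langle s(x,y)\rangle$; this is a legitimate and complete argument (indeed, you only need that an order isomorphism carries the least element of a subset to the least element of its image, so even the appeal to meet-preservation can be lightened). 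What your approach buys is self-containedness --- no dependence on the theory of term quasivarieties --- at the cost of a little lattice-theoretic bookkeeping; what the paper's approach buys is brevity and the placement of $\SNA$ inside a general framework where such principal-congruence descriptions come for free. Your concluding step is also slightly more careful than the paper's: where the paper simply asserts $\sq(s(x,y))=s(x,y)$, you justify it from $\sq(u\we v)=\sq u\we \sq v$ (condition \ref{infimo}) of Definition \ref{generalised}) together with $\sq(u\ra v)=u\ra v$ (Lemma \ref{lc1b}), which is exactly the intended justification.
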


\begin{proof}
It follows from Theorem \ref{rept} and \cite[Theorem 2.4]{SM} that 
$(z,w)\in \theta(x,y)$ if and only if $s(z,w) \in \langle s(x,y)\rangle$. That is,
$(z,w)\in \theta(x,y)$ if and only if $\square(s(x,y)) \ra s(z,w) = 1$. But $\square(s(x,y)) = s(x,y)$,
so $(z,w)\in \theta(x,y)$ if and only if $s(x,y) \ra s(z,w) = 1$.
\end{proof}

\begin{corollary}
The variety $\SNA$ has $\mathrm{EDPC}$.
\end{corollary}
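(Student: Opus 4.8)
The plan is to deduce this corollary directly from the preceding Proposition, which already supplies a single-equation characterization of principal congruences; essentially no further work is required beyond bookkeeping. First I would recall the definition: a variety has $\mathrm{EDPC}$ when there are finitely many quaternary terms $p_1,q_1,\ldots,p_n,q_n$ such that, in every member of the variety and for all elements, $(z,w)\in\theta(x,y)$ holds if and only if $p_i(x,y,z,w)\approx q_i(x,y,z,w)$ for all $i$.

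Next I would observe that $s(u,v)=(u\ra v)\we(v\ra u)\we(\s u\ra \s v)\we(\s v\ra \s u)$ is a term in the language of $\SNA$, so $s(x,y)\ra s(z,w)$ is a quaternary term. The Proposition then gives, uniformly across $\SNA$, that $(z,w)\in\theta(x,y)$ if and only if $s(x,y)\ra s(z,w)=1$. Hence one may take $n=1$, with $p_1:=s(x,y)\ra s(z,w)$ and $q_1:=1$; the single equation $s(x,y)\ra s(z,w)\approx 1$ defines the principal congruence $\theta(x,y)$, which is precisely the content of $\mathrm{EDPC}$.

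The main (and essentially only) subtlety is that the Proposition already carries the entire weight of the argument, so there is no genuine obstacle at this point. The two things worth confirming are that the characterization is uniform in the algebra — which it is, being an equational condition valid in every $T\in\SNA$ — and that the constant $1$ is a term of the signature, so that $s(x,y)\ra s(z,w)\approx 1$ is a legitimate defining equation (one could equally write $s(x,y)\ra s(z,w)\approx z\ra z$ to avoid the constant). The deeper content lies upstream: in the term-variety machinery of \cite{SM} together with the description of the generated open implicative filter $\langle s(x,y)\rangle$ via Lemma \ref{ac1}, both of which are already established.
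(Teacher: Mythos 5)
Your proposal is correct and matches the paper's intent exactly: the corollary is an immediate consequence of the preceding Proposition, which provides the single quaternary equation $s(x,y)\ra s(z,w)\approx 1$ defining $\theta(x,y)$ uniformly across $\SNA$. The paper gives no further argument, and none is needed.
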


Other application of Theorem \ref{thm-con} are the following two propositions,
where we give a description of the simple and subdirectily irreducible algebras of
the variety $\SNA$ respectively.

\begin{proposition}
Let $T\in \SNA$. The following conditions are equivalent:
\begin{enumerate}[\normalfont 1)]
\item $T$ is simple.
\item For every $x\in T$, if $x\neq 1$ then $\square(x) \ra 0 = 1$. 
\end{enumerate}
\end{proposition}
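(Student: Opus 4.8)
My plan is to route everything through the congruence/open-implicative-filter correspondence of Theorem \ref{thm-con}. Since the maps $\theta \mapsto 1/\theta$ and $F \mapsto \Theta(F)$ form an order isomorphism between $\Con(T)$ and $\oIF(T)$, the algebra $T$ is simple precisely when $\oIF(T)$ has exactly two elements. First I would pin down the bottom and top of $\oIF(T)$. The top is $T$ itself (it corresponds to the total congruence $\nabla$, and $1/\nabla = T$). For the bottom, I would observe that every open implicative filter contains $1$ and that $\{1\}$ is itself an open implicative filter: the implicative condition holds because if $1, 1\ra y \in \{1\}$ then $1\ra y = 1 \leq y$ (by item 1 of Proposition \ref{pvi}, $1\ra y \leq y$), forcing $y = 1$; openness is trivial. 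Equivalently, $\{1\} = 1/\Delta$ is the image of the identity congruence, so it is the least element. Thus, assuming $T$ nontrivial (so that $0 \neq 1$ and $\{1\} \neq T$), \emph{$T$ is simple if and only if the only open implicative filters are $\{1\}$ and $T$}.

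For the implication $(1) \Rightarrow (2)$, I would use principal open implicative filters. Let $x \neq 1$ and consider $\langle x \rangle = \{y \in T : \square(x) \ra y = 1\}$, using the final corollary before the statement. Since $\square(x) = 1\ra x \leq x$ (item 1 of Proposition \ref{pvi}) we get $\square(x)\ra x = 1$ (item 3 of the same proposition), so $x \in \langle x \rangle$. As $x \neq 1$, the filter $\langle x \rangle$ strictly contains $\{1\}$, hence by simplicity $\langle x \rangle = T$. In particular $0 \in \langle x \rangle$, which is exactly $\square(x) \ra 0 = 1$.

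For $(2) \Rightarrow (1)$, I would take an arbitrary $F \in \oIF(T)$ with $F \neq \{1\}$ and show $F = T$. Pick $x \in F$ with $x \neq 1$. Since $F$ is open, $\square(x) = 1\ra x \in F$, and condition (2) gives $\square(x) \ra 0 = 1 \in F$; the implicative-filter property then yields $0 \in F$. As $F$ is an open filter (Lemma \ref{lc4}) it is an upset and $0$ is the least element, so $F = T$. Hence $\oIF(T) = \{\{1\}, T\}$ and $T$ is simple. I expect no deep obstacle here: the argument is essentially bookkeeping on top of Theorem \ref{thm-con} and the description of $\langle x \rangle$. The only genuinely delicate point is the trivial-algebra boundary case, where condition (2) holds vacuously yet $T$ is not simple; I would handle this by adopting the standing convention that simple algebras are nontrivial (equivalently, that $0 \neq 1$), which guarantees $\{1\} \neq T$ in the backward direction.
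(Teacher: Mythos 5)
Your proof is correct and follows essentially the same route as the paper: both directions go through the order isomorphism of Theorem \ref{thm-con}, using the principal filter $\langle x \rangle = \{y \in T : \square(x) \ra y = 1\}$ for $(1)\Rightarrow(2)$ and the openness-plus-implicative-filter argument ($\square(x)\in F$, $\square(x)\ra 0 = 1 \in F$, hence $0\in F$) for $(2)\Rightarrow(1)$. The only additions are bookkeeping the paper leaves implicit, namely verifying that $\{1\}$ is the least open implicative filter and flagging the trivial-algebra convention.
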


\begin{proof}
Suppose that $T$ is simple, so $\oIF(T) = \{\{1\},T\}$. Let $x\in T$ such that $x\neq 1$.
Then $\langle x \rangle \neq \{1\}$, so $\langle x \rangle =T$. Since $0\in T$ then $\square(x)\ra 0 = 1$.
Conversely, let $F\in \oIF(T)$ be such that $F\neq \{1\}$, so there exists $x\in T$ such that
$x\neq 1$. It follows from hypothesis that $\square(x)\ra 0 = 1 \in F$. Since $x\in F$ then $\square(x)\in F$,
so $0\in F$, i.e., $F = T$. Therefore, $T$ is simple.
\end{proof}

\begin{proposition}
Let $T\in \SNA$ and suppose that $T$ is not trivial. The following conditions are equivalent:
\begin{enumerate}[\normalfont 1)]
\item $T$ is subdirectly irreducible.
\item There exists $x\in T-\{1\}$ such that for every $y\in T-\{1\}$, $\square(y) \ra x = 1$. 
\end{enumerate}
\end{proposition}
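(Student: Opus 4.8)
The plan is to transport the entire question to the lattice $\oIF(T)$ of open implicative filters by means of the order isomorphism of Theorem \ref{thm-con}, and then to invoke the standard universal-algebraic fact that a non-trivial algebra is subdirectly irreducible precisely when its congruence lattice possesses a monolith, i.e.\ a least congruence strictly above the identity congruence $\Delta$. Since $\theta \mapsto 1/\theta$ is an order isomorphism and the classes of $\Delta$ are singletons (so $1/\Delta=\{1\}$), subdirect irreducibility of $T$ becomes equivalent to the existence of a least element in the collection of open implicative filters that properly contain $\{1\}$. I will call such a least element, when it exists, the monolithic filter $M$.

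Next I would rewrite condition 2) in filter-theoretic terms. By the corollary giving $\langle y\rangle=\{z\in T:\square(y)\ra z=1\}$, the equality $\square(y)\ra x=1$ says exactly that $x\in\langle y\rangle$. Hence condition 2) asserts the existence of an element $x\neq 1$ lying in $\langle y\rangle$ for every $y\neq 1$. Using that $y\in F$ forces $\langle y\rangle\subseteq F$ for any open implicative filter $F$, and that any $F\neq\{1\}$ contains some $y\neq 1$, this is equivalent to saying that $x$ lies in every open implicative filter $F$ with $F\neq\{1\}$.

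For the implication 2)$\Rightarrow$1) I would take the witness $x$ and consider $\langle x\rangle$. As $x\neq 1$ and $x\in\langle x\rangle$, this is an open implicative filter properly above $\{1\}$; and for every open implicative filter $F\neq\{1\}$ the previous paragraph gives $x\in F$, whence $\langle x\rangle\subseteq F$. Thus $\langle x\rangle$ is the least non-trivial open implicative filter, so $\Con(T)$ has a monolith and $T$ is subdirectly irreducible. Conversely, for 1)$\Rightarrow$2), subdirect irreducibility provides the monolithic filter $M\neq\{1\}$; choose any $x\in M$ with $x\neq 1$. Given an arbitrary $y\neq 1$, the filter $\langle y\rangle$ is non-trivial, so minimality of $M$ yields $M\subseteq\langle y\rangle$ and therefore $x\in\langle y\rangle$, i.e.\ $\square(y)\ra x=1$, which is condition 2).

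The only genuinely delicate point is the bookkeeping of the correspondence between the two lattices: one must verify that the identity congruence matches the filter $\{1\}$ and that ``least non-trivial congruence'' matches ``least open implicative filter distinct from $\{1\}$'' under the isomorphism of Theorem \ref{thm-con}. Once that dictionary is fixed, both directions are short, the substantive work having already been carried out in establishing the congruence/filter isomorphism and the explicit description of $\langle y\rangle$.
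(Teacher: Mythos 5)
Your proof is correct and follows essentially the same route as the paper: both translate subdirect irreducibility, via the order isomorphism of Theorem \ref{thm-con}, into the existence of a least open implicative filter distinct from $\{1\}$, and both use the description $\langle y\rangle=\{z\in T:\square(y)\ra z=1\}$ to match that filter-theoretic condition with condition 2). The only cosmetic difference is that where you pass through the intermediate reformulation ``$x$ lies in every nontrivial open implicative filter,'' the paper verifies $x\in G$ directly from $\square(y)\ra x=1\in G$, $\square(y)\in G$ and the implicative filter property, which amounts to the same computation.
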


\begin{proof}
Let $T$ be a non trivial subresiduated Nelson algebra. Suppose that $T$ is subdirectly irreducible.
Thus, there exists $F\in \oIF(T)$ such that $F\neq \{1\}$ and for every open implicative filter $G\neq \{1\}$,
$F\subseteq G$. Since $F\neq \{1\}$ there exists $x\in F$ such that $x\neq 1$. Let $y\in T$ be
such that $y\neq 1$, i.e., $\langle y \rangle \neq \{1\}$. Thus, $x\in F \subseteq \langle y \rangle$,
i.e., $\square(y) \ra x = 1$. Conversely, suppose that 2) is satisfied. Let $x\neq 1$ be an element 
which satisfies 2). Let $F = \langle x \rangle$, so $F\neq \{1\}$. Let $G\in \oIF(T)$ be such that $G\neq \{1\}$.
Note that $F\subseteq G$ if and only if $x\in G$. In order to see that $x\in G$, note that since $G\neq \{1\}$
then there exists $y\in G$ such that $y\neq 1$. Thus, it follows from hypothesis that $\square(y)\ra x = 1 \in G$.
But $y\in G$, so $\square(y) \in G$. Hence, $x\in G$. Therefore, 
$T$ is subdirectly irreducible.
\end{proof}

Finally, we use Theorem \ref{thm-con} in order to show that 
$\SNA$ has the congruence extension property.

\begin{proposition} \label{CEP}
The variety $\SNA$ has the congruence extension property.
\end{proposition}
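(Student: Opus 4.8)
The plan is to prove the congruence extension property (CEP) by translating it, via the isomorphism of Theorem~\ref{thm-con}, into a statement about open implicative filters, and then to verify that statement by a direct filter-generation computation using Lemma~\ref{ac1}. Recall that CEP says: for every $T\in\SNA$, every subalgebra $S\leq T$, and every congruence $\psi\in\Con(S)$, there exists $\theta\in\Con(T)$ whose restriction to $S\times S$ equals $\psi$. The whole point of the congruence-filter correspondence is that it reduces this to a much more tractable question about sets.

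First I would use Theorem~\ref{thm-con} to replace congruences by open implicative filters on both algebras. Let $S\leq T$ and let $\psi\in\Con(S)$ correspond to the open implicative filter $G=1/\psi\in\oIF(S)$. The natural candidate for the extending congruence is the one corresponding to the open implicative filter of $T$ generated by $G$, namely $\langle G\rangle_T$ (the generation taken in $T$, using Lemma~\ref{ac1}). Set $\theta=\Theta(\langle G\rangle_T)\in\Con(T)$. What must be shown is that $\theta$ restricts to $\psi$ on $S$; by Lemma~\ref{lc2} and the definition of $s(x,y)$, this amounts to showing that for all $x,y\in S$,
\[
s(x,y)\in\langle G\rangle_T \quad\Longleftrightarrow\quad s(x,y)\in G.
\]
The direction $(\Leftarrow)$ is trivial since $G\subseteq\langle G\rangle_T$ and $s(x,y)\in S$. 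The content is the direction $(\Rightarrow)$: I must show $\langle G\rangle_T\cap S\subseteq G$, i.e. that generating the filter in the larger algebra $T$ introduces no new elements of $S$ beyond those already in $G$.

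The main obstacle, and the heart of the proof, is exactly this closure computation $\langle G\rangle_T\cap S=G$. To handle it I would invoke the explicit description from Lemma~\ref{ac1}: an element $w\in T$ lies in $\langle G\rangle_T$ iff $\square(g_1\we\cdots\we g_n)\ra w=1$ for some $g_1,\dots,g_n\in G$. So suppose $w\in S$ satisfies $\square(g)\ra w=1$ where $g=g_1\we\cdots\we g_n\in G$ (using that $G$ is a filter, so the finite meet $g$ lies in $G$). The equation $\square(g)\ra w=1$ is an identity holding in $T$ but involving only elements of the subalgebra $S$; since $S$ is closed under $\we$, $\ra$, $\square$ (as $\square x=1\ra x$), every term here lives in $S$, and the equation holds in $S$ as well because $S$ is a subalgebra. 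Then, reading Lemma~\ref{ac1} now inside $S$, this witnesses $w\in\langle G\rangle_S=G$, the last equality because $G$ is already an open implicative filter of $S$. This absoluteness of the generation formula across the subalgebra inclusion is the crux: it works precisely because $\langle\,\cdot\,\rangle$ is given by a first-order (indeed equational) condition in the operations that $S$ inherits from $T$.

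Finally I would assemble the pieces: $\theta=\Theta(\langle G\rangle_T)$ is a congruence of $T$ by Lemma~\ref{lc5}, and by the displayed equivalence together with Lemma~\ref{lc2} its restriction to $S$ has the same class of $1$ as $\psi$, hence by Lemma~\ref{lc3} (applied within $S$) coincides with $\psi$. Therefore $\theta$ extends $\psi$, and $\SNA$ has the congruence extension property. \emph{I expect the filter-generation absoluteness step to be the only one requiring genuine care; everything else is bookkeeping through the correspondence of Theorem~\ref{thm-con}.}
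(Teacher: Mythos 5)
Your proof is correct and takes essentially the same route as the paper's: both reduce CEP to open implicative filters via Theorem~\ref{thm-con}, invoke the explicit generation formula of Lemma~\ref{ac1}, and conclude from the fact that the generating witnesses all lie in the subalgebra, so the generated filter of $T$ intersected with the subalgebra adds nothing new. The only cosmetic difference is that the paper defines the extension as the congruence of $T$ generated by the given congruence and then identifies its $1$-class as the generated filter, whereas you build the extension directly as $\Theta(\langle G\rangle_T)$; the substance of the key step is identical.
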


\begin{proof}
Let $T, U\in \SNA$ be such that $U$ is a subalgebra of $T$ and $\theta \in \Con(U)$.               
We will show that there exists $\hat{\theta} \in \Con(T)$ 
such that $\theta = \hat{\theta}\cap U^2$. We define $\hat{\theta}$ as the congruence of 
$T$ generated by $\theta$.
Thus, it follows from Lemma \ref{lc3} and Theorem \ref{thm-con} that $1/\hat{\theta}$ is the 
open implicative filter of $T$ generated by the set $1/\theta$. 
In order to see that $\theta = \hat{\theta}\cap U^2$,
let  $(x,y) \in \hat{\theta}\cap U^2$, so  $x,y \in U$ and $s(x,y) \in 1/\hat{\theta}$. 
In particular, $s(x,y)\in U$ and it follows from Lemma \ref{ac1} that there exist $x_1,\ldots,x_n \in 1/\theta$
such that $\sq(x_1\we\cdots \we x_n)\ra s(x,y) = 1$. Since $\sq(x_1\we\cdots \we x_n)$ and $1$ are elements of $1/\theta$
then $s(x,y)\in 1/\theta$, i.e., $(x,y) \in \theta$.
Therefore,  $\theta = \hat{\theta}\cap U^2$.
\end{proof}

\section{The variety generated by the class of totally ordered subresiduated Nelson algebras} \label{s5}

The aim of this final section is to give an equational base for the variety generated by the totally 
ordered subresiduated Nelson algebras.

Let $T\in \SNA$. A proper implicative filter $P$ of $T$ is called prime if for every $x,y\in T$
we have that if $x\vee y \in P$ then $x\in P$ or $y\in P$.  
We write $\oX(T)$ to denote the set of open prime implicative filters of $T$.

\begin{lemma} \label{lem-ch}
Let $T\in \SNA$ such that 
\[
T \vDash ((x\ra y)\we(\s y \ra \s x)) \vee ((y\ra x)\we(\s x \ra \s y)) = 1.
\] 
Let $P\in \oX(T)$. Then $T/P$ is a chain.
\end{lemma}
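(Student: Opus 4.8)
The plan is to read $T/P$ as the quotient $T/\Theta(P)$, where $\Theta(P)$ is the congruence associated to the open prime implicative filter $P$ via the order isomorphism of Theorem \ref{thm-con}. Since $T/\Theta(P)$ is again a bounded distributive lattice, proving it is a chain amounts to showing that any two classes are comparable, i.e.\ that for all $x,y\in T$ either $x/\Theta(P)\leq y/\Theta(P)$ or $y/\Theta(P)\leq x/\Theta(P)$ in the quotient lattice. I would first translate comparability into membership in $P$: by definition of the lattice order and of $\Theta(P)$, one has $x/\Theta(P)\leq y/\Theta(P)$ if and only if $(x,x\we y)\in\Theta(P)$, that is, if and only if $s(x\we y,x)\in P$ (using the symmetry of $s$).

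The technical core is the identity $s(x\we y,x)=(x\ra y)\we(\s y\ra\s x)$, which I would establish by computing the four conjuncts of $s(x\we y,x)$ separately. Two of them collapse to $1$: $(x\we y)\ra x=1$ and $\s x\ra\s(x\we y)=\s x\ra(\s x\vee\s y)=1$ by Proposition \ref{pvi}, item 3, since $x\we y\leq x$ and $\s x\leq\s x\vee\s y$. The remaining two simplify using the distribution laws of Definition \ref{generalised}: condition \ref{infimo}) gives $x\ra(x\we y)=(x\ra x)\we(x\ra y)=x\ra y$, and the De Morgan law Ne2) together with condition \ref{supremo}) gives $\s(x\we y)\ra\s x=(\s x\vee\s y)\ra\s x=(\s x\ra\s x)\we(\s y\ra\s x)=\s y\ra\s x$. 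Hence $s(x\we y,x)=(x\ra y)\we(\s y\ra\s x)$, and symmetrically $s(x\we y,y)=(y\ra x)\we(\s x\ra\s y)$. This calculation is routine, but it is the step that must be carried out carefully, as everything else rests on it.

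With these identities in hand the conclusion is immediate, and this is where the hypothesis and the primeness of $P$ enter. The assumed equation says precisely that $s(x\we y,x)\vee s(x\we y,y)=1$, and $1\in P$; since $P$ is a prime implicative filter, $s(x\we y,x)\in P$ or $s(x\we y,y)\in P$. By the translation of the first paragraph this means $x/\Theta(P)\leq y/\Theta(P)$ or $y/\Theta(P)\leq x/\Theta(P)$. As $x,y$ were arbitrary, $T/\Theta(P)=T/P$ is a chain. I do not anticipate a genuine obstacle here: the only subtlety is recognising that the linearity identity of the hypothesis is exactly the disjunction $s(x\we y,x)\vee s(x\we y,y)$, after which primeness does all the work.
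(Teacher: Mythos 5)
Your proof is correct and follows essentially the same route as the paper: use primeness of $P$ on the linearity identity, then translate $(x\ra y)\we(\s y\ra\s x)\in P$ into $(x,x\we y)\in\Theta(P)$ to get comparability of the classes. The only difference is that you spell out the computation $s(x\we y,x)=(x\ra y)\we(\s y\ra\s x)$, which the paper leaves implicit.
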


\begin{proof}
Let $P\in \oX(T)$ and $x,y\in T$. Taking into account that 
\[
((x\ra y)\we(\s y \ra \s x)) \vee ((y\ra x)\we(\s x \ra \s y)) = 1 \in P
\]
we get $(x\ra y)\we(\s y \ra \s x)\in P$ or $(y\ra x)\we(\s x \ra \s y) \in P$. Thus, $(x,x\we y)\in \Theta(P)$
or $(y,y\we x)\in \Theta(P)$, i.e., $x/P\leq y/P$ or $y/P\leq x/P$.
Hence, $T/P$ is a chain.
\end{proof}

\begin{lemma}
Let $T\in \SNA$ such that $T \vDash \square(x\vee y) = \square(x) \vee \square(y)$. Let $F\in \oIF(A)$ and $I$ an
ideal such that $F\cap I = \emptyset$. Then there exists a filter $P\in \oX(T)$ such that $F\subseteq P$ and $P\cap I = \emptyset$.
\end{lemma}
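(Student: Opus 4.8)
The plan is to run a standard Zorn's lemma separation argument, producing a maximal open implicative filter that contains $F$ and avoids $I$, and then to show that maximality together with the hypothesis $\square(x\vee y)=\square(x)\vee\square(y)$ forces this filter to be prime.

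First I would consider the family $\mathcal{C}$ of all open implicative filters $G$ of $T$ with $F\subseteq G$ and $G\cap I=\emptyset$, ordered by inclusion. This family is nonempty since $F\in\mathcal{C}$, and the union of any chain in $\mathcal{C}$ is again an open implicative filter (routine from the definitions) that still contains $F$ and avoids $I$; hence every chain has an upper bound in $\mathcal{C}$. By Zorn's lemma $\mathcal{C}$ has a maximal element $P$. Since $I$ is an ideal we have $0\in I$, and $P\cap I=\emptyset$ gives $0\notin P$, so $P$ is a proper implicative filter. It then remains to prove that $P$ is prime, which I expect to be the main obstacle.

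For primality I would argue by contradiction: suppose $x\vee y\in P$ while $x\notin P$ and $y\notin P$. By maximality, $\langle P\cup\{x\}\rangle$ and $\langle P\cup\{y\}\rangle$ both strictly contain $P$, both lie in $\oIF(T)$, and both contain $F$; hence neither belongs to $\mathcal{C}$, so each must meet $I$. Using Corollary \ref{ac2} I would pick $a\in I\cap\langle P\cup\{x\}\rangle$ and $b\in I\cap\langle P\cup\{y\}\rangle$, so that $(f\we\square(x))\ra a=1$ and $(g\we\square(y))\ra b=1$ for some $f,g\in P$. Writing $h=f\we g\in P$ and invoking the monotonicity of $\ra$ in each argument (Proposition \ref{pvi}) together with $a,b\le a\vee b$, I obtain $(h\we\square(x))\ra(a\vee b)=1$ and $(h\we\square(y))\ra(a\vee b)=1$.

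The decisive step is the amalgamation. By axiom 1) of Definition \ref{generalised},
\[
\big((h\we\square(x))\vee(h\we\square(y))\big)\ra(a\vee b)=1,
\]
and by distributivity of the underlying lattice together with the hypothesis $\square(x\vee y)=\square(x)\vee\square(y)$ the antecedent equals $h\we\square(x\vee y)$, so $(h\we\square(x\vee y))\ra(a\vee b)=1$. Since $x\vee y\in P$ and $P$ is open, $\square(x\vee y)\in P$, and as $P$ is a filter (Lemma \ref{lc4}) we get $h\we\square(x\vee y)\in P$; the implicative property then yields $a\vee b\in P$. But $a\vee b\in I$ because $I$ is an ideal, contradicting $P\cap I=\emptyset$. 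Hence $P$ is prime, so $P\in\oX(T)$ with $F\subseteq P$ and $P\cap I=\emptyset$, as required. It is precisely this last step where the join-preservation hypothesis on $\square$ is indispensable, since it is what lets the two separate witnesses $a$ and $b$ be merged into the single element $a\vee b$ that is then driven into $P$ by the single assumption $x\vee y\in P$.
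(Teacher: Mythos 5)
Your proof is correct and follows essentially the same route as the paper's: a Zorn's lemma argument on the family of open implicative filters containing $F$ and disjoint from $I$, followed by the same primality-by-contradiction step in which the two witnesses from $\langle P\cup\{x\}\rangle\cap I$ and $\langle P\cup\{y\}\rangle\cap I$ are merged via axiom 1) of Definition \ref{generalised}, lattice distributivity, and the hypothesis $\square(x\vee y)=\square(x)\vee\square(y)$. Your write-up is in fact slightly cleaner than the paper's, which contains notational slips (writing $y\vee z$ where $z\vee w$ is meant).
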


\begin{proof}
Let $\Sigma = \{G\in \oIF(T):F\subseteq G\;\text{and}\;G\cap I = \emptyset\}$. Since $F\in \Sigma$ then $\Sigma \neq \emptyset$.
A direct computation shows that $\Sigma$ is under the hypothesis of Zorn's lemma, so there exists a maximal element $P$ in $\Sigma$. 
In particular, $F\subseteq P$ and $P\cap I = \emptyset$. Moreover, it is immediate that $P$ is a proper open implicative filter.
Suppose that $P$ is not prime, so there exist $x,y\in T$ such that $x\vee y \in P$ and $x, y\notin P$.
Let $P_x =\langle P\cup \{x\}\rangle$ and $P_y =\langle P\cup \{y\}\rangle$. The maximality of $P$ in $\Sigma$ implies that
$P_x \cap I\neq \emptyset$ and $P_y \cap I\neq \emptyset$, so there exist $z,w\in I$ and $p_1,p_2\in P$ such that
$(p_1 \we \sq(x))\ra z = 1$ and $(p_2 \we \sq(y))\ra w = 1$. Let $p = p_1\we p_2 \in P$.
Thus, $(p\we \square(x))\ra (z\vee w) = 1$ and $(p\we \square(y))\ra (z\vee w) = 1$.
Thus,
\[ 
((p\we \sq(x))\ra (z\vee w))\we ((p\we \sq(y))\ra (z\vee w)) = 1.
\]
But 
\[
((p\we \sq(x))\ra (z\vee w))\we ((p\we \sq(y))\ra (z\vee w)) = ((p\we \square(x)) \vee (p\we \square(y)))\ra (y\vee z)
\]
and
\[
((p\we \square(x)) \vee (p\we \square(y)))\ra (y\vee z) = (p\we \square(x\vee y)) \ra (y\vee z).
\]
Then
\[
(p\we \square(x\vee y)) \ra (y\vee z) = 1 \in P
\]
and $p\we \square(x\vee y) \in P$, so $y\vee z\in P$. Besides, since $y,z\in I$ then $y\vee z\in I$,
so $P\cap I \neq \emptyset$, which is a contradiction. Therefore, $P\in \oX(T)$.
\end{proof}

Let $T \in \SNA$ be such that $T \vDash \square(x\vee y) = \square(x) \vee \square(y)$. Suppose that $T$ is not trivial.
Then $\oX(T)\neq \emptyset$. Indeed, let $x\in T$ such that $x\neq 1$. Then $(x] \cap \{1\} = \emptyset$, where 
$(x]:=\{y\in T:y\leq x\}$. Then there exists a filter $P\in \oX(T)$. Therefore, $\oX(T) \neq \emptyset$.

\begin{corollary} \label{cor-pf}
Let $T\in \SNA$ be such that $T \vDash \square(x\vee y) = \square(x) \vee \square(y)$. Suppose that $T$ is not trivial.
Then the intersection of all open prime implicative filters of $T$ is equal to $\{1\}$.
\end{corollary}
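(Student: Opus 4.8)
The plan is to establish the equality $\bigcap_{P\in\oX(T)}P=\{1\}$ by proving the two inclusions separately; note first that the intersection is over a nonempty index set, since the paragraph preceding the corollary already guarantees $\oX(T)\neq\emptyset$ under the standing hypotheses. The inclusion $\{1\}\subseteq\bigcap_{P\in\oX(T)}P$ is immediate, because $1$ lies in every filter and hence in every open prime implicative filter. The real content is the reverse inclusion: I must show that no element other than $1$ survives the intersection.

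To that end I would fix an arbitrary $x\in T$ with $x\neq 1$ and produce a single $P\in\oX(T)$ avoiding $x$. The natural set to separate from is the principal ideal $(x]=\{y\in T:y\leq x\}$, which is an ideal of the underlying lattice and contains $x$. Since $x\neq 1$ we have $1\notin(x]$, so $\{1\}\cap(x]=\emptyset$. As $\{1\}$ is itself an open implicative filter---openness follows from $x\ra x=1$, and the implicative closure from item~1 of Proposition~\ref{pvi}, which gives $1\ra y\leq y$---I can apply the preceding separation lemma with $F=\{1\}$ and $I=(x]$. This yields a filter $P\in\oX(T)$ with $\{1\}\subseteq P$ and $P\cap(x]=\emptyset$; in particular $x\notin P$, so $x\notin\bigcap_{P\in\oX(T)}P$. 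This is precisely the argument sketched in the paragraph before the corollary, now run for every nonunit $x$ rather than merely to witness nonemptiness of $\oX(T)$.

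Combining the two inclusions gives $\bigcap_{P\in\oX(T)}P=\{1\}$. I expect no genuine obstacle here: the decisive work has already been carried out in the separation lemma, whose proof invokes the hypothesis $\square(x\vee y)=\square(x)\vee\square(y)$ together with a Zorn's-lemma maximality argument. The only points requiring care in assembling the corollary are the elementary observations that $(x]$ is an ideal disjoint from $\{1\}$ whenever $x\neq 1$, and that $\{1\}$ qualifies as the open implicative filter $F$ fed into the lemma.
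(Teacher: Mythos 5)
Your proposal is correct and follows exactly the argument the paper intends: the paper leaves the corollary's proof implicit, but the paragraph preceding it (separating $\{1\}$ from the principal ideal $(x]$ via the Zorn-type separation lemma) is precisely your argument, which you correctly run for every $x\neq 1$ rather than just once for nonemptiness of $\oX(T)$. Your verification that $\{1\}$ is an open implicative filter (via $x\ra x=1$ and item 1 of Proposition \ref{pvi}) is the only detail the paper glosses over, and you handle it correctly.
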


Let $T\in \SNA$ and $x,y \in T$. We define 
\[
t(x,y) = ((x\ra y)\we(\s y \ra \s x)) \vee ((y\ra x)\we(\s x \ra \s y)).
\] 
We also define
\[
\SNAc = \SNA + \{\square(x\vee y) = \square(x) \vee \square(y)\} + \{ t(x,y) = 1\}.
\]
Let $\C$ be the class of totally ordered members of $\SNA$.

\begin{theorem}
$\V(\C) = \SNAc$.
\end{theorem}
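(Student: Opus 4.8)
The plan is to prove the two inclusions separately. The inclusion $\V(\C)\subseteq\SNAc$ is routine, while $\SNAc\subseteq\V(\C)$ is the substantive direction and rests on a subdirect decomposition assembled from the machinery of the previous section.

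For $\V(\C)\subseteq\SNAc$: since $\SNAc=\SNA+\{\square(x\vee y)=\square(x)\vee\square(y)\}+\{t(x,y)=1\}$ is an equational class and hence a variety, and $\V(\C)$ is the least variety containing $\C$, it suffices to check $\C\subseteq\SNAc$. So I would fix a chain $T\in\C$ and verify the two extra identities on arbitrary $x,y\in T$; by totality we may assume $x\le y$. Applying item 2 of Proposition \ref{pvi} with the operation $1\ra(-)$ gives $\square x\le\square y$, so $\square(x\vee y)=\square y=\square x\vee\square y$, which is the first identity. For $t(x,y)=1$, note that $x\le y$ yields $x\ra y=1$ by item 3 of Proposition \ref{pvi}, while the Kleene axiom Ne2) makes $\s$ order-reversing, so $\s y\le\s x$ and hence $\s y\ra\s x=1$ as well; thus $(x\ra y)\we(\s y\ra\s x)=1$ and therefore $t(x,y)=1$. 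The case $y\le x$ is symmetric.

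For $\SNAc\subseteq\V(\C)$: the key is to exhibit every $T\in\SNAc$ as a subdirect product of chains belonging to $\C$. The trivial algebra lies in every variety, so I assume $T$ nontrivial. Consider the family $\{\Theta(P):P\in\oX(T)\}$ of congruences attached to the open prime implicative filters through the correspondence of Theorem \ref{thm-con}, and set $\delta:=\bigcap_{P\in\oX(T)}\Theta(P)$. Then $1/\delta=\bigcap_{P}(1/\Theta(P))=\bigcap_{P}P=\{1\}$ by Corollary \ref{cor-pf}. Since by Lemma \ref{lc3} a congruence is determined by its class of $1$, and the identity congruence also has $\{1\}$ as its $1$-class, we conclude that $\delta$ is the identity congruence; equivalently, the canonical map $T\to\prod_{P\in\oX(T)}T/\Theta(P)$ is injective, i.e. a subdirect embedding. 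Now each factor $T/\Theta(P)=T/P$ is a chain by Lemma \ref{lem-ch}, whose hypothesis is exactly the identity $t(x,y)=1$ valid throughout $\SNAc$, so every factor lies in $\C$. Being a subalgebra of a product of members of $\C$, the algebra $T$ belongs to $\V(\C)$, which finishes this inclusion and the proof.

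The main obstacle is this reverse inclusion, and within it the step showing that the intersection $\delta$ of the congruences $\Theta(P)$ collapses to the identity. This is precisely where Corollary \ref{cor-pf}, the filter/congruence isomorphism of Theorem \ref{thm-con}, and Lemma \ref{lc3} act together: they translate the lattice-theoretic fact that the open prime implicative filters separate the elements from $1$ into the congruence-theoretic statement required for a subdirect representation. Everything else reduces to the monotonicity of $\square$ and the order-reversing behaviour of $\s$ on chains, which is bookkeeping.
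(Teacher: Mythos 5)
Your proof is correct and follows essentially the same route as the paper: both directions rest on verifying $\C \subseteq \SNAc$ and then, for nontrivial $T \in \SNAc$, embedding $T$ subdirectly into $\prod_{P\in \oX(T)} T/P$ using Corollary \ref{cor-pf} for injectivity and Lemma \ref{lem-ch} to see that each factor is a chain. Your only (cosmetic) deviation is phrasing the injectivity via the intersection of the congruences $\Theta(P)$ and Lemma \ref{lc3}, where the paper argues elementwise that $s(x,y)=1$ forces $x=y$; these are the same argument.
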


\begin{proof}
A straightforward computation shows that $\C \subseteq \SNAc$, so $\V(\C) \subseteq \SNAc$.
In order to show the converse inclusion, let $T\in \SNAc$. If $T$ is trivial then $T\in \V(\C)$.
Now suppose that $T$ is not trivial. Let $\alpha:T\ra \prod_{P\in \oX(T)}T/P$ be the homomorphism defined by $\alpha (x)=(x/P)_{P\in \oX(T)}$. We will show that 
the homomorphism $\alpha$ is injective. Let $x,y\in T$ such that $\alpha(x) = \alpha(y)$.
Then $x/P = y/P$ for every $P\in \oX(T)$. Thus, $s(x,y) \in P$ for every $P\in \oX(T)$. 
It follows from Corollary \ref{cor-pf} that $s(x,y) = 1$. Hence, $x = y$.
We have proved that $\alpha$ is a monomorphism. Besides, it follows from Lemma \ref{lem-ch} that
$T/P$ is a chain for every $P\in \oX(T)$. Therefore, $T\in \V(\C)$. Then, $\SNAc\subseteq \V(\C)$.
Therefore, $\V(\C) =\SNAc$. 
\end{proof}

Note that $\SNAc$ is a proper subvariety of $\SNA$. Indeed, let $A$ be the subresiduated lattice given in Example \ref{ex1}.
We have that $\K(A)\in \SNA$. Consider $x = (a,0)$ and $y=(b,0)$, which are elements of $\K(A)$. Then
$\square x \vee \square y = (0,0)$ and $\square(x\vee y) = (1,0)$, so $\K(A) \notin \SNAc$.

\section{Centered subresiduated Nelson algebras} \label{s6}

A Kleene algebra (Nelson algebra) is called centered if there exists an element which
is a fixed point with respect to the involution, i.e., an element $\ce$ such that 
$\s \ce = \ce$. This element is necessarily unique. If $T = \K(A)$ where $A$ is a bounded distributive
lattice, the center is $\ce = (0,0)$.
Let $T$ be a centered Kleene algebra. We define the following condition:
\begin{center}
$\CK$ For every $x,y\in T$ if $x, y\geq \ce$ and $x\we y\leq \ce$
then there exists $z\in T$ such that $z\vee \ce = x$ and $\s z \vee \ce = y$.
\end{center}
The condition $\CK$ is not necessarily satisfied in every centered Kleene algebra, 
see for instance Figure 1 of \cite{CCSM}. However, every centered Nelson algebra
satisfies the condition $\CK$ (see \cite[Theorem 3.5]{C} and \cite[Proposition 6.1]{CCSM}).

The following two properties are well known: 
\begin{itemize}
\item Let $T$ be a Kleene algebra. Then $T$ is isomorphic to $\K(A)$ for some
bounded distributive lattice $A$ if and only if $T$ is centered and satisfies the
condition $\CK$ (see \cite[Theorem 2.3]{C} and \cite[Proposition 6.1]{CCSM}).
\item Let $T$ be a Nelson algebra. Then $T$ is isomorphic to $\K(A)$ for some
Heyting algebra $A$ if and only if $T$ is centered (see \cite[Theorem 3.7]{C}).
\end{itemize}

An algebra $\langle T,\we,\vee, \ra,\s,0,1,\ce \rangle$ is
a centered subresiduated Nelson algebra if $\langle T,\we,\vee, \ra,\s,0,1\rangle$ 
is as a subresiduated Nelson algebra and $\ce$ is a center.
We write $\SNAc$ for the variety whose members are centered subresiduated Nelson algebras.

In this section we prove that given $T\in \SNA$, $T$ is isomorphic to $\K(A)$
for some subresiduated lattice $A$ if and only if $T$ is centered and satisfies the
condition $\CK$ (we also show that the condition $\CK$ is not necessarily satisfied in every
centered subresiduated Nelson algebra). Finally we show that there exists a categorical equivalence
between $\SRL$ and the full subcategory of $\SNAc$ whose objects satisfy the condition $\CK$
\footnote{If $\mathrm{V}$ is a variety of algebras, with an abuse of notation we write it in this way to refer to the algebraic category associated to this variety.}. 

We start with some preliminary results.

\begin{lemma}\label{ce1}
Let $T\in \SNAc$ and $x, y\in T$. Then the following conditions are satisfied:
\begin{enumerate}[\normalfont 1)]
\item $\ce \ra x = 1$.
\item $((x\we y)\vee \ce)\ra 0 = (x\we y)\ra 0$.
\item  $(x\we y)\ra 0 = 1$ if and only if $x\we y \leq \ce$.
\end{enumerate}
\end{lemma}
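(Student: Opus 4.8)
The plan is to reduce all three statements to componentwise computations inside a twist structure, using the representation of Theorem \ref{rept}. First I would regard $T$ as a subalgebra of $\K(A)$ with $A = T/\co$, via the monomorphism $\rho_T(x) = (x/\co, \s x/\co)$. The key preliminary observation is that the center must correspond to the pair $(0,0)$: since $\s\ce = \ce$ we have $\rho_T(\ce) = (\ce/\co, \ce/\co)$, and as this pair belongs to $\K(A)$ its coordinates satisfy $\ce/\co \we \ce/\co = 0$, forcing $\ce/\co = 0$. Thus $\rho_T(\ce) = (0,0)$, the unique fixed point of $\s$ in $\K(A)$. Since $\rho_T$ is an order embedding preserving all the operations, it then suffices to verify the three assertions in $\K(A)$, reading $\ce$ as $(0,0)$ and $0$ as $(0,1)$.

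For item 1, writing $\rho_T(x) = (a,b)$, Remark \ref{fc} gives $(0,0)\Ra(a,b) = (1,0)$ iff $0 \leq a$, which always holds; hence $\ce\ra x = 1$. For item 2 I would set $\rho_T(x\we y) = (p,q)$ (so $p\we q = 0$) and compute both sides: $\rho_T((x\we y)\vee\ce) = (p,q)\vee(0,0) = (p,0)$, and a direct evaluation of $\Ra$ shows that $(p,0)\Ra(0,1)$ and $(p,q)\Ra(0,1)$ both equal $(p\ra 0, p)$; injectivity of $\rho_T$ then yields the claimed equality.

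For item 3, with $\rho_T(x\we y) = (p,q)$ one has $\rho_T((x\we y)\ra 0) = (p\ra 0, p)$, which equals $\rho_T(1) = (1,0)$ exactly when $p = 0$. The one genuinely non-formal ingredient is the sr-lattice fact that $p\ra 0 = 1$ iff $p = 0$: indeed $p\ra 0 = \max\{d\in D : p\we d = 0\}$, and since $1\in D$ this maximum is $1$ precisely when $p = p\we 1 = 0$. On the other side, $x\we y\leq\ce$ translates to $(p,q)\leq(0,0)$, i.e.\ $p\leq 0$ and $0\leq q$, again equivalent to $p = 0$. Combining the two chains of equivalences gives $(x\we y)\ra 0 = 1$ iff $x\we y\leq\ce$.

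The argument is essentially routine once the representation is fixed; the only two steps requiring care are the identification $\rho_T(\ce) = (0,0)$, where the defining condition of $\K(A)$ is what forces $\ce/\co = 0$, and the sr-lattice equivalence $p\ra 0 = 1 \Leftrightarrow p = 0$ used in item 3. As an alternative avoiding the twist representation, one can argue directly: item 1 follows from $\ce\ra 0 = (\ce\we\s\ce)\ra\s(\ce\ra\ce) = 1$ (condition \ref{ultima}) of Definition \ref{generalised} with $\s(\ce\ra\ce)=\s 1=0$) together with monotonicity, item 2 from condition \ref{supremo}) of Definition \ref{generalised}, and the forward direction of item 3 from item 5 of Proposition \ref{pvi} (which gives $x\we y = (x\we y)\we\s(x\we y)$) followed by the Kleene identity Ne3) applied with second variable $\ce$, yielding $x\we y \leq \ce\vee\s\ce = \ce$.
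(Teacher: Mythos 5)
Your argument is correct, but your primary route differs from the paper's. The paper proves the lemma directly from the axioms, entirely inside $T$: for 1) it applies condition 8) of Definition \ref{generalised} with $x=y=\ce$ to get $\ce\ra 0 = (\ce\we\s\ce)\ra\s(\ce\ra\ce)=1$, then combines $\ce\ra 0=1$ with $0\ra x=1$ via Proposition \ref{pvi}; 2) is noted to be a direct consequence of 1) (together with condition 1) of the definition); for 3) the forward direction uses item 5 of Proposition \ref{pvi} and the Kleene axiom Ne3) with second variable $\ce$ (exactly your closing sketch), and the converse follows from 1) by antitonicity of $\ra$ in the first argument. You instead route the whole lemma through Theorem \ref{rept}: embed $T$ into $\K(T/\co)$, observe that membership of $(\ce/\co,\ce/\co)$ in $\K(T/\co)$ forces $\rho_T(\ce)=(0,0)$, and verify the three claims coordinatewise. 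All of your steps check out: the transfer of equations and inequalities along the injective homomorphism is legitimate (it is the same device the paper itself uses to prove Lemma \ref{lc1b}), the computation $(p,q)\Ra(0,1)=(p\ra 0,p)$ is right, and the sr-lattice equivalence $p\ra 0=1$ iff $p=0$ is correctly justified. What the representation buys you is that every claim becomes a transparent two-coordinate computation; what the paper's direct proof buys is brevity and independence from the representation theorem. One small remark: your "alternative" equational sketch at the end coincides with the paper's proof but covers only the forward direction of 3); the converse (from $x\we y\leq\ce$ to $(x\we y)\ra 0=1$, using $\ce\ra 0=1$ and the antitonicity given by item 2 of Proposition \ref{pvi}) is handled in your main twist argument but should be stated explicitly if one wants the alternative to stand alone.
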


\begin{proof}
In order to show 1), first we will see that $\ce \ra 0 = 1$. 
Indeed, $\ce \ra 0 = (\ce \we \sim \ce) \ra \sim (\ce \ra \ce) = 1$,
so $c\ra 0 = 1$. Taking into account that $\ce\ra 0 = 1$ and $0\ra x = 1$,
it follows from Proposition \ref{pvi} that $\ce \ra x = 1$. The condition 2)
is a direct consequence of 1).

In order to prove 3), suppose that that
$(x\we y)\ra 0 = 1$, so 
\[
x\we y = (x\we y)\we ((x\we y)\ra 0) \leq (x\we y) \we \s(x\we y)\leq \ce.
\]
Hence,  $x\we y \leq \ce$. The converse follows from 1).
\end{proof}

Let $T\in \SNAc$. We define the following condition:
\begin{center}
$\CC$ For every $x, y\in T$, if $(x\we y)\ra 0 = 1$ then there exists
$z\in T$ such that $z\vee \ce = x\vee \ce$ and $\s z \vee \ce = y\vee \ce$.
\end{center}

\begin{lemma}\label{ce2}
Let $T\in \SNAc$.
Then $\rho$ is surjective if and only if $T$ satisfies the condition $\CC$.
\end{lemma}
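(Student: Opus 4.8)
The statement to prove is Lemma \ref{ce2}: for $T \in \SNAc$, the map $\rho = \rho_T$ (from Theorem \ref{rept}) is surjective if and only if $T$ satisfies condition $\CC$. Let me recall the pieces. We have $\rho_T : T \to \K(T/\co)$ given by $\rho_T(x) = (x/\co, \s x/\co)$, and this is always a monomorphism. So the content here is purely about surjectivity. An element of $\K(T/\co)$ is a pair $(u/\co, v/\co)$ with $(u/\co) \we (v/\co) = 0/\co$ in the sr-lattice $T/\co$, which by Lemma \ref{l2} means $(u \we v) \ra 0 = 1$ — equivalently, by Lemma \ref{ce1}(3), $u \we v \le \ce$. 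So the target consists essentially of pairs whose meet is below the center.

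Let me think about what $\rho$ surjective demands. Given a candidate pair $(u/\co, v/\co)$ in $\K(T/\co)$, surjectivity asks for some $z \in T$ with $\rho(z) = (z/\co, \s z/\co) = (u/\co, v/\co)$, i.e. $z/\co = u/\co$ and $\s z/\co = v/\co$. Now $z/\co = u/\co$ should be re-expressible via the center. The natural guess is that $z/\co = u/\co$ is equivalent to $z \vee \ce = u \vee \ce$ up to $\co$, but I need to track this carefully. The key computation I expect to need is that $w/\co = \sq w/\co$ and that $\sq w$ relates to $w \vee \ce$; indeed since $\ce \ra x = 1$ for all $x$ (Lemma \ref{ce1}(1)), adding $\ce$ is "invisible" to $\ra 0$ and hence to the quotient in a controlled way. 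Concretely, I expect $u/\co = z/\co$ iff $u \vee \ce$ and $z \vee \ce$ agree modulo $\co$, and that the condition $\CC$ is exactly engineered so that the $z$ it produces satisfies $z \vee \ce = u \vee \ce$ and $\s z \vee \ce = v \vee \ce$ \emph{on the nose}, which then forces the required equalities in the quotient.

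\textbf{The plan.} First I would prove the easy direction, that $\CC$ implies surjectivity. Take an arbitrary element $(u/\co, v/\co) \in \K(T/\co)$; its defining relation gives $(u \we v) \ra 0 = 1$. Apply $\CC$ to the pair $(u,v)$ to obtain $z \in T$ with $z \vee \ce = u \vee \ce$ and $\s z \vee \ce = v \vee \ce$. The task is then to check $\rho(z) = (u/\co, v/\co)$, i.e. $z/\co = u/\co$ and $\s z /\co = v/\co$. This is where I would use the fact that passing to $T/\co$ kills the center: since $\ce/\co = 0/\co$ (which follows because $\ce \ra 0 = 1$, established inside Lemma \ref{ce1}(1)), the equalities $z \vee \ce = u \vee \ce$ and $\s z \vee \ce = v \vee \ce$ descend to $z/\co = z/\co \vee 0/\co = u/\co \vee 0/\co = u/\co$ and likewise $\s z/\co = v/\co$. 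Hence $\rho(z) = (u/\co, v/\co)$ and $\rho$ is surjective.

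\textbf{The converse and the main obstacle.} For the other direction I would assume $\rho$ surjective and verify $\CC$. Suppose $(x \we y) \ra 0 = 1$. Then $(x/\co) \we (y/\co) = 0/\co$ in $T/\co$ by Lemma \ref{l2} together with Lemma \ref{ce1}(3), so $(x/\co, y/\co)$ is a genuine element of $\K(T/\co)$. Surjectivity yields $z \in T$ with $z/\co = x/\co$ and $\s z/\co = y/\co$. It remains to upgrade these quotient-level equalities to the exact identities $z \vee \ce = x \vee \ce$ and $\s z \vee \ce = y \vee \ce$ demanded by $\CC$. \emph{This is the step I expect to be the crux.} The quotient equality $z/\co = x/\co$ only says $z \ra x = 1$ and $x \ra z = 1$, which is weaker than an honest lattice identity; the center must be used to rigidify it. I anticipate proving a lemma of the form: if $a/\co = b/\co$ then $a \vee \ce = b \vee \ce$, exploiting item 5 (or 6) of Proposition \ref{pvi} together with $\ce \ra x = 1$ and the Kleene identities to show that adjoining $\ce$ collapses the two representatives. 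The delicate point is handling the De Morgan interaction $\s z/\co = y/\co \Rightarrow \s z \vee \ce = y \vee \ce$ simultaneously with the positive part, since one cannot treat $z$ and $\s z$ independently. Once this rigidification lemma is in hand, the $z$ supplied by surjectivity witnesses $\CC$, completing the equivalence.
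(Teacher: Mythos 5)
Your overall structure matches the paper's proof, and your $\CC\Rightarrow$ surjectivity direction is complete and correct; it is in fact a slightly different (and arguably cleaner) route than the paper's, which instead computes $(z\vee \ce)\ra x=(z\ra x)\we(\ce\ra x)=z\ra x$ and uses $(x\vee\ce)\ra x=1$ to conclude $z\ra x=1$, where you simply push the identities $z\vee\ce=u\vee\ce$, $\s z\vee\ce=v\vee\ce$ through the quotient using compatibility of $\co$ with $\vee$ (Lemma \ref{l1}) and $\ce/\co=0/\co$ (for which, besides $\ce\ra 0=1$, you also need the trivial $0\ra\ce=1$ from item 3 of Proposition \ref{pvi}).

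In the converse direction (surjectivity $\Rightarrow\CC$), however, there is a genuine gap: the step you yourself call the crux --- if $a\ra b=1$ and $b\ra a=1$ then $a\vee\ce=b\vee\ce$ --- is never proved; you only announce that you ``anticipate proving'' such a rigidification lemma and point at plausible ingredients. Since the entire content of this direction is exactly that lemma, the proposal as written does not establish the statement. The lemma does hold, and the paper's argument for it is short: from $a\ra b=1$, item 8 of Proposition \ref{pvi} gives $(a\vee\ce)\ra(b\vee\ce)=1$; then condition \ref{inf}) of Definition \ref{generalised} yields
\[
a\vee\ce=(a\vee\ce)\we\bigl((a\vee\ce)\ra(b\vee\ce)\bigr)\leq(a\vee\ce)\we\bigl(\s(a\vee\ce)\vee(b\vee\ce)\bigr),
\]
and since $\s(a\vee\ce)=\s a\we\s\ce=\s a\we\ce\leq\ce\leq b\vee\ce$, the right-hand side collapses to $(a\vee\ce)\we(b\vee\ce)$, so $a\vee\ce\leq b\vee\ce$; exchanging the roles of $a$ and $b$ gives equality. (Your alternative pointer to item 6 of Proposition \ref{pvi} also works, since $\s(b\vee\ce)\ra\s(a\vee\ce)\geq\ce\ra(\s a\we\ce)=1$ by Lemma \ref{ce1}.) Note also that the ``delicate point'' you flag is not actually delicate: one does not need to handle $z$ and $\s z$ simultaneously, because the rigidification lemma is applied twice, independently, to the pairs $(z,x)$ and $(\s z,y)$ --- exactly as in the paper, where the second identity is obtained by ``the same argument.''
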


\begin{proof}
Suppose that $\rho$ is surjective. Let $x, y \in T$ such that $(x\we y)\ra 0 = 1$,
i.e., $x/\theta \we y/\theta = 0/\theta$. Since $\rho$ is surjective, there exists $z\in T$
such that $z/\theta = x/\theta$ and $\s z/\theta = y/\theta$, i.e., 
$z\ra x = 1$, $x\ra z = 1$, $\s z \ra y = 1$, $y \ra \s z = 1$. We will show that
$x\vee \ce \leq z\vee \ce$. It follows from Proposition \ref{pvi} 
that $(x\vee \ce) \ra (z\vee \ce) = 1$. Thus,
\[
x\vee \ce = (x\vee \ce)\we ((x\vee \ce) \ra (z\vee \ce)) \leq (x\vee \ce)\we (\s(x\vee \ce) \vee (z\vee c)) = (x\vee \ce)\we (z\vee \ce),
\]
so $x\vee \ce \leq z\vee \ce$. The same argument shows that $z\vee \ce \leq x\vee \ce$, so $x\vee \ce = z\vee \ce$.
Similarly, we get $\s z\vee \ce = y\vee \ce$. Hence, condition $\CC$ is satisfied.

Conversely, suppose that $\CC$ is satisfied and let $x,y \in T$ be such that 
$x/\theta \we y/\theta = 0/\theta$, i.e., $(x\we y)\ra 0 = 1$. It follows from
hypothesis that there exists $z\in T$ such that $z\vee \ce = x\vee \ce$ and 
$\s z \vee \ce = y\vee \ce$. Then $(x\vee \ce) \ra x = (z\vee \ce)\ra x$,
i.e., $(z\vee \ce)\ra x = 1$. Besides, $(z\vee \ce)\ra x = (z\ra x)\we (\ce \ra x)$,
and it follows from Lemma \ref{ce1} that $\ce \ra x = 1$, so $z\ra x = 1$.
The same argument shows that $x\ra z = 1$, so $x/\theta = z/\theta$. 
For the same reason, we get $\s z/\theta = y/\theta$. Therefore, $\rho$ is surjective. 
\end{proof}

\begin{lemma}\label{ce3}
Let $T\in \SNAc$. Conditions $\CC$ and $\CK$ are equivalent.
\end{lemma}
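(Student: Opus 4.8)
The plan is to prove the two implications separately, using Lemma~\ref{ce1} as the bridge between the two ways of saying that $x\we y$ lies below the center: the algebraic form $(x\we y)\ra 0 = 1$ that appears in $\CC$ and the order-theoretic form $x\we y \leq \ce$ that appears in $\CK$. Item 3 of Lemma~\ref{ce1} asserts exactly that these two statements are equivalent, so the whole argument reduces to matching up the conclusions of $\CC$ and $\CK$ once the hypotheses have been translated through this equivalence.

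First I would treat $\CK \Rightarrow \CC$. Assume $\CK$ and take $x,y\in T$ with $(x\we y)\ra 0 = 1$; by item 3 of Lemma~\ref{ce1} this gives $x\we y\leq \ce$. Set $x' = x\vee \ce$ and $y' = y\vee \ce$, so that $x',y'\geq \ce$. The one nontrivial step is the distributivity computation, which together with $\s\ce = \ce$ yields
\[
x'\we y' = (x\vee \ce)\we (y\vee \ce) = (x\we y)\vee (x\we \ce)\vee (y\we \ce)\vee \ce = (x\we y)\vee \ce = \ce,
\]
where the last equality uses $x\we y\leq \ce$ and the middle one uses $x\we \ce, y\we \ce\leq \ce$. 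Hence $x'\we y'\leq \ce$, and $\CK$ applied to the pair $(x',y')$ produces $z\in T$ with $z\vee \ce = x' = x\vee \ce$ and $\s z\vee \ce = y' = y\vee \ce$, which is precisely the conclusion of $\CC$.

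The converse $\CC \Rightarrow \CK$ is even more direct. Given $x,y\geq \ce$ with $x\we y\leq \ce$, item 3 of Lemma~\ref{ce1} converts the latter into $(x\we y)\ra 0 = 1$, so $\CC$ supplies $z\in T$ with $z\vee \ce = x\vee \ce$ and $\s z\vee \ce = y\vee \ce$. Since $x,y\geq \ce$ we have $x\vee \ce = x$ and $y\vee \ce = y$, so these identities read $z\vee \ce = x$ and $\s z\vee \ce = y$, which is the conclusion of $\CK$.

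I do not expect a genuine obstacle here: the only substantive ingredient is item 3 of Lemma~\ref{ce1}, already established, and the short distributivity calculation displayed above. The conceptual content is merely that $\CC$ is the ``$\ce$-saturated'' reformulation of $\CK$ — it ranges over arbitrary $x,y$ but constrains only the elements $x\vee \ce$ and $y\vee \ce$ — so that for elements already lying above $\ce$ the two conditions coincide verbatim.
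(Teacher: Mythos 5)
Your proof is correct and follows essentially the same route as the paper's: both directions hinge on item 3 of Lemma~\ref{ce1} to translate between $(x\we y)\ra 0 = 1$ and $x\we y\leq \ce$, and then match the conclusions of the two conditions verbatim (the paper merely asserts $\hat{x}\we\hat{y}\leq\ce$ where you spell out the distributivity computation, which is a welcome extra detail; note the computation needs only distributivity, not $\s\ce=\ce$). Nothing further is required.
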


\begin{proof}
Suppose that $T$ satisfies $\CK$ and let $x, y\in T$ such that $(x\we y)\ra 0 = 1$.
It follows from Lemma \ref{ce1} that $x\we y \leq \ce$. Let $\hat{x} = x\vee \ce$ and
$\hat{y} = y\vee \ce$. Then $\hat{x}, \hat{y}\geq \ce$ and $\hat{x}\we \hat{y} \leq \ce$,
so it follows from hypothesis that there exists $z\in T$ such that $z\vee \ce = \hat{x}$ 
and $\s z \vee \ce = \hat{y}$, i.e., $z\vee \ce = x\vee \ce$ and $\s z \vee \ce = y\vee \ce$.

Conversely, suppose that $\CC$ is satisfied. Let $x,y\in T$ be such that 
$x, y\geq \ce$ and $x\we y\leq \ce$ It follows from Lemma \ref{ce1} that
$(x\we y)\ra 0 = 1$. Thus, it follows from hypothesis that there exists
$z\in T$ such that $z\vee \ce = x\vee \ce$ and $\s z \vee \ce = y\vee \ce$, i.e.,
$z\vee \ce = x$ and $\s z \vee \ce = y$.
\end{proof}

\begin{theorem}
Let $T\in \SNA$. Then $T$ is isomorphic to $\K(A)$ for some subresiduated lattice $A$
if and only if $T$ has center and it satisfies the condition $\CK$.
\end{theorem}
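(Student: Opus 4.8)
**

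The plan is to prove the two implications separately, using the representation theorem (Theorem \ref{rept}) together with the characterizations of condition $\CK$ developed in Lemmas \ref{ce1}, \ref{ce2} and \ref{ce3}.

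For the easy direction, suppose $T \cong \K(A)$ for some $A\in \SRL$. Then I would transport the center and condition $\CK$ across the isomorphism, so it suffices to check them for $\K(A)$ itself. The element $\ce = (0,0)$ is a fixed point of $\s$, since $\s(0,0)=(0,0)$, and it lies in $\K(A)$ because $0\we 0 = 0$; hence $\K(A)$ is centered. For condition $\CK$, I would take $x=(a,b)$ and $y=(c,d)$ in $\K(A)$ with $x,y\geq \ce$ and $x\we y\leq \ce$. The inequality $x\geq \ce=(0,0)$ forces $b=0$, and similarly $d=0$, so $x=(a,0)$ and $y=(c,0)$; the inequality $x\we y = (a\we c, 0)\leq (0,0)$ forces $a\we c = 0$. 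The natural candidate is $z=(a,c)$, which is a legitimate element of $\K(A)$ precisely because $a\we c=0$. A direct computation gives $z\vee \ce = (a,c)\vee (0,0)=(a\vee 0, c\we 0)=(a,0)=x$ and $\s z\vee \ce=(c,a)\vee(0,0)=(c,0)=y$, so $\CK$ holds.

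For the converse, suppose $T$ is centered and satisfies $\CK$. The target is to show that the canonical monomorphism $\rho=\rho_T:T\ra \K(T/\co)$ of Theorem \ref{rept} is in fact an isomorphism, which then exhibits $T\cong \K(A)$ with $A=T/\co\in\SRL$. Since $\rho$ is already a monomorphism by Theorem \ref{rept}, the only thing left to establish is surjectivity. Here I would invoke the chain of equivalences already proved: by Lemma \ref{ce3}, condition $\CK$ is equivalent to condition $\CC$, and by Lemma \ref{ce2}, condition $\CC$ is equivalent to surjectivity of $\rho$. Thus $\CK$ gives surjectivity directly, and $\rho$ becomes a bijective homomorphism, hence an isomorphism onto $\K(T/\co)$.

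The work has essentially been front-loaded into the preceding lemmas, so no single step here is a serious obstacle; the role of this theorem is to assemble them. If any care is needed, it is in the easy direction, making sure that the center and $\CK$ genuinely transfer along an arbitrary isomorphism $T\cong\K(A)$ rather than merely holding for $\K(A)$. This is routine: an isomorphism preserves $\s$, so it carries the unique fixed point of $\s$ in $\K(A)$ to the unique fixed point in $T$, and it preserves $\we$, $\vee$ and the order, so the existential statement $\CK$ is transported verbatim. With both implications in hand the equivalence follows.
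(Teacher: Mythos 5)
Your proposal is correct and follows essentially the same route as the paper: for the forward direction you verify that $\K(A)$ has center $(0,0)$ and satisfies $\CK$ via the same witness $z=(a,c)$ built from the first coordinates, then transport both properties along the isomorphism; for the converse you combine Theorem \ref{rept} with the equivalences $\CK \Leftrightarrow \CC \Leftrightarrow$ surjectivity of $\rho$ from Lemmas \ref{ce3} and \ref{ce2}, exactly as the paper does. Your write-up is in fact somewhat more detailed than the paper's, spelling out the coordinate computations that force $x=(a,0)$, $y=(c,0)$ and $a\we c=0$.
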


\begin{proof}
Suppose that $T \cong \K(A)$ for some $A\in \SRL$. Let $x, y \in \K(A)$ such that
$x, y \geq \ce$ and $x\we y \leq \ce$. Thus, there exists $(a,b) \in \K(A)$ such
that $x = (a,0)$ and $y = (b,0)$. The element $z = (a,b) \in \K(A)$ satisfies that  
$z\vee \ce = x$ and $\s z \vee \ce = y$. Hence, $\K(A)$ satisfies $\CK$. Since this condition
is preserved by isomorphisms then $T$ satisfies $\CK$. Furthermore, the fact that $\K(A)$ has got a center,
which is $(0,0)$, implies that $T$ has a center. The converse follows from Lemma \ref{ce2},
Lemma \ref{ce3} and Theorem \ref{rept}.
\end{proof}

Let $A$ be the subresiduated lattice given in Example \ref{ex1}. Then $\K(A)\in \SNAc$.
Let $T$ be a subset of $\K(A)$ given by the following Hasse diagram, which is 
the Hasse diagram of Figure 1 of \cite{CCSM}: 
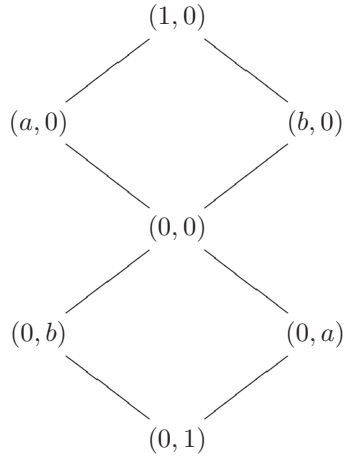
\begin{figure}[h]
    \centering
   \[
\xymatrix{
   &  (1,0)\\
\ar@{-}[ur] (a,0) &  & \ar@{-}[ul] (b,0)\\
   & \ar@{-}[ul] (0,0) \ar@{-}[ur]\\
   \ar@{-}[ur]  (0,b) &  & \ar@{-}[ul] (0,a)\\
   & \ar@{-}[ul] (0,1) \ar@{-}[ur]&\\
}
\]
    \caption{A centered subresiduated Nelson algebra that does not meet Condition $\CK$.}
    \label{fig:Hasse}
\end{figure}

Since $T$ is a subalgebra of $\K(A)$, we get $T\in \SNAc$. 
Note that we have $(a,0), (b,0) \geq (0,0)$ and $(a,0)\we (b,0) =  (0,0)$. However there is
not $z\in T$ such that $z\vee (0,0) = (a,0)$ and ${\sim} z \vee (0,0) = (b,0)$.
Therefore, $T$ does not satisfy $\CK$. It is also interesting to note that $U = \{(0,1),(1,0)\} \in \SNA$
but $U$ does not have a center. 
\vspace{1pt}

If $A\in \SRL$ then $\K(A) \in \SNA$. 
Besides, if $f:A\ra B$ is a morphism in $\SRL$ then 
it follows from a direct computation that the map $\K(f): \K(A) \ra \K(B)$ given by $\K(f)(a,b): = (f(a),f(b))$
is a morphism in $\SNA$. Moreover, $\K$ can be extended to a functor from $\SRL$ to $\SNA$.  
Conversely, if $T\in \SNA$ then $\C(T) =: T/\theta \in \SRL$. If $f:T\ra U$ is a morphism in $\SNA$ then
$\C(f):T/\theta \ra U/\theta$ given by $\C(f)(x/\theta) = f(x)/\theta$ is a morphism in $\SRL$. 
Moreover, $\C$ can be extended to a functor from $\SNA$ to $\SRL$.

\begin{lemma} \label{alpha}
Let $A\in \SRL$. Then the map $\alpha_{A}:A \ra \C(\K(A))$ given by $\alpha_{A}(a) = (a,\neg a)/\theta$
is an isomorphism.
\end{lemma}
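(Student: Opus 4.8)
The plan is to show that $\alpha_A(a) = (a, \neg a)/\theta$ is a well-defined bijective homomorphism in $\SRL$, where $\neg a = a \ra 0$ denotes the pseudocomplement available in the subresiduated lattice $A$. First I would check that $\alpha_A$ lands in $\K(A)$ before quotienting: since $a \we \neg a = a \we (a \ra 0) \leq 0$ by condition 5) of the sr-lattice axioms, the pair $(a, \neg a)$ indeed satisfies $a \we \neg a = 0$, so $(a,\neg a) \in \K(A)$ and $\alpha_A$ is well defined. I would then verify that $\alpha_A$ preserves the operations of $\SRL$. Preservation of $0$, $1$, $\we$ and $\vee$ reduces, modulo $\theta$, to computing $\neg 0 = 1$, $\neg 1 = 0$, and the De Morgan-type behaviour of $\neg$ on meets and joins; the point is that we only need these identities to hold \emph{up to the relation $\theta$}, i.e.\ after applying the quotient map to $\K(A)$, which gives considerable slack because two pairs are $\theta$-related exactly when their first coordinates coincide (by Remark \ref{fc} and Lemma \ref{l2}, $(a,b)\,\theta\,(c,d)$ iff $a \le c$ and $c \le a$, i.e.\ $a = c$).

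The key structural observation, which I would isolate as the engine of the proof, is that for any $A \in \SRL$ the composite ``take $\K(A)$, then quotient by $\theta$'' collapses the second coordinate entirely: the map sending $(a,b)/\theta$ to $a$ is a well-defined isomorphism $\C(\K(A)) \to A$. This follows because, as just noted, $(a,b)\,\theta\,(c,d)$ holds precisely when $a = c$, so the first-coordinate projection is injective on $\theta$-classes and it is visibly surjective and operation-preserving (the operations on $\K(A)$ act on first coordinates by exactly the sr-lattice operations of $A$, with the implication $\Ra$ acting by $(a,b)\Ra(c,d) = (a\ra c, a\we d)$, whose first coordinate is $a \ra c$). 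Granting this, $\alpha_A$ is nothing but the inverse of that projection: its composite with the projection sends $a \mapsto (a,\neg a)/\theta \mapsto a$, the identity on $A$, and since the projection is an isomorphism, $\alpha_A$ is its two-sided inverse and hence an isomorphism as well.

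Concretely I would organize the argument in two steps: (i) establish that $\pi_A : \C(\K(A)) \to A$, $\pi_A((a,b)/\theta) = a$, is an $\SRL$-isomorphism, checking well-definedness (independence of representative, via $a = c$ on $\theta$-classes), homomorphism property for $\we, \vee, \ra, 0, 1$ (each by direct inspection of the first coordinate of the $\K(A)$ operations), injectivity (again $a = c \Rightarrow$ equal classes) and surjectivity (the class of $(a, \neg a)$ maps to $a$); and (ii) observe $\pi_A \circ \alpha_A = \mathrm{id}_A$, so that $\alpha_A$ is a bijection whose inverse is a homomorphism, whence $\alpha_A$ itself is an $\SRL$-morphism and therefore an isomorphism.

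The main obstacle I anticipate is \emph{well-definedness and the homomorphism property of $\alpha_A$ for the implication $\ra$}. The subtlety is that $\neg(a \ra c)$ need not equal $a \we \neg c$ on the nose in an sr-lattice (sr-lattices are weaker than Heyting algebras, as Example \ref{ex1} shows where $\neg a = a \ra 0 = 0 \ne b$), so one cannot simply match second coordinates. The resolution is exactly the slack provided by $\theta$: I only need the \emph{first} coordinate of $\alpha_A(a) \Ra \alpha_A(c) = (a \ra c, a \we \neg c)$ to agree with that of $\alpha_A(a \ra c) = (a\ra c, \neg(a \ra c))$, and both first coordinates are literally $a \ra c$. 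Hence the mismatch in second coordinates is invisible modulo $\theta$, and the cleanest way to make this rigorous is to route everything through the projection isomorphism $\pi_A$ of step (i) rather than attempting to verify $\alpha_A$ preserves operations coordinatewise by hand.
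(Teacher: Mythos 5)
Your proposal is correct, and it proves the lemma by a different decomposition than the paper, though both rest on the same underlying fact. The paper's proof verifies the required properties of $\alpha_A$ directly: well-definedness from $a\we\neg a=0$, surjectivity by rewriting an arbitrary class $(a,b)/\theta$ as $(a,\neg a)/\theta$, and it declares the homomorphism property and injectivity ``immediate''. Every one of those steps tacitly uses what you isolate as the engine of your argument: by Remark \ref{fc} and the definition of $\theta$ in \eqref{rel}, two pairs in $\K(A)$ are $\theta$-related exactly when their first coordinates coincide. Your reorganization --- first proving that the first-coordinate projection $\pi_A:\C(\K(A))\ra A$ is an $\SRL$-isomorphism, then recognizing $\alpha_A$ as its two-sided inverse --- makes explicit what the paper's ``immediate'' conceals, and it buys rigor precisely where the direct verification is least immediate: $\alpha_A$ does not preserve the implication coordinatewise, since $\neg(a\ra c)$ and $a\we\neg c$ generally differ in an sr-lattice, so the equality $\alpha_A(a\ra c)=\alpha_A(a)\ra\alpha_A(c)$ in $\C(\K(A))$ holds only because $\theta$ erases second coordinates. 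The paper's route is shorter on the page; yours names the hidden lemma, handles injectivity and surjectivity in one stroke, and produces the inverse isomorphism as a by-product, which is arguably the cleaner way to present the result.
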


\begin{proof}
We write $\alpha$ in place of $\alpha_{A}$.
First we will show that $\alpha$ is a well defined map. Let $a\in A$. Then $a\we \neg a = 0$. 
Thus, $(a,\neg a)/\theta \in \K(A)/\theta$.
Let $a\in A$. Then $(a,\neg a) /\theta = (a,0)/\theta$. 
It is immediate that $\alpha$ is a homomorphism. 
The injectivity of $\alpha$ is also immediate. 
In order to show that $\alpha$ is suryective, let $y\in \C(\K(A))$, so $y = (a,b)/\theta$
for some $a, b\in A$ such that $a \we b = 0$. Moreover, $y = (a,\neg a)/\theta$,
so $y = \alpha(a)$. Thus, $\alpha$ is suryective. Therefore, $\alpha$ is an isomorphism.
\end{proof}

A direct computation shows that if $f:A \ra B$ is a morphism in $\SRL$ and $a\in A$
then $\C(K(f))(\varphi_{A}(a)) = \varphi_{B}(f(a))$, and that if $f:T\ra U$ is a morphism in $\SNA$ and $x\in T$
then $\K(\C(f))(\rho_{T}(x)) = \rho_{U}(f(x))$. 

\begin{remark}
Note that if $T, U$ are Kleene algebras, $T$ has center and $f$ is a morphism in $\SNA$ from $T$ to $U$, then 
$U$ has center and $f(\ce) = \ce$. Indeed, $f(\ce) = f(\sim \ce) = \sim f(\ce)$.
\end{remark}

Therefore, the following result follows from the previous results of this section, and Lemmas \ref{ce2} and \ref{ce3}.

\begin{theorem}\label{center}
There exists a categorical equivalence between $\SRL$ and $\SNAc$.
\end{theorem}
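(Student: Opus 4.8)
The plan is to realize $\K$ and $\C$ as mutually quasi-inverse functors between $\SRL$ and $\SNAc$, taking the maps $\alpha$ and $\rho$ already constructed as the two required natural isomorphisms. First I would confirm that the assignments are genuine functors into the stated categories. On objects, $\K(A)$ is a centered subresiduated Nelson algebra with center $(0,0)$, while $\C(T)=T/\co$ lies in $\SRL$; on morphisms, $\K(f)(a,b)=(f(a),f(b))$ and $\C(f)(x/\co)=f(x)/\co$ were already observed to be homomorphisms in the appropriate signatures, and preservation of identities and of composition is immediate. I would also note that $\K(f)$ respects the center, since $\K(f)(0,0)=(f(0),f(0))=(0,0)$ and, more generally, any morphism between centered Kleene algebras fixes $\ce$ by the Remark preceding the statement.

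Next I would upgrade Lemma \ref{alpha} to a natural isomorphism $\alpha\colon\mathrm{Id}_{\SRL}\Rightarrow\C\K$. Each component $\alpha_A\colon A\to\C(\K(A))$ is an isomorphism by Lemma \ref{alpha}, and the naturality square is exactly the identity $\C(\K(f))(\alpha_A(a))=\alpha_B(f(a))$ recorded just before the statement; hence $\C\K\cong\mathrm{Id}_{\SRL}$.

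The substantive step is to promote $\rho$ to a natural isomorphism $\rho\colon\mathrm{Id}_{\SNAc}\Rightarrow\K\C$. For $T\in\SNAc$, Theorem \ref{rept} already gives that $\rho_T\colon T\to\K(\C(T))$ is a monomorphism. To see that it is a morphism of centered algebras, observe that $\ce\ra 0=1$ by Lemma \ref{ce1}(1) and $0\ra\ce=1$ by Proposition \ref{pvi}(3), so $\ce\,\co\,0$, and therefore $\rho_T(\ce)=(\ce/\co,\s\ce/\co)=(\ce/\co,\ce/\co)=(0,0)$, the center of $\K(\C(T))$. The crucial remaining point is surjectivity: since the objects of $\SNAc$ satisfy $\CK$, Lemma \ref{ce3} gives that they satisfy $\CC$, and then Lemma \ref{ce2} yields that $\rho_T$ is onto. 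Being an injective surjective homomorphism, $\rho_T$ is an isomorphism, and its naturality is the recorded identity $\K(\C(f))(\rho_T(x))=\rho_U(f(x))$; hence $\K\C\cong\mathrm{Id}_{\SNAc}$.

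With the natural isomorphisms $\mathrm{Id}_{\SRL}\cong\C\K$ and $\mathrm{Id}_{\SNAc}\cong\K\C$ in place, I would conclude that $\K$ and $\C$ constitute an equivalence of categories between $\SRL$ and $\SNAc$. The main obstacle is the surjectivity of $\rho_T$: its injectivity and the naturality of both transformations are formal consequences of the results already established, but surjectivity is precisely where the structural hypothesis on the objects is consumed, routed through the equivalence $\CK\Leftrightarrow\CC$ of Lemma \ref{ce3} and the translation of $\CC$ into surjectivity of $\rho$ given by Lemma \ref{ce2}.
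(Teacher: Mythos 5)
Your proposal follows exactly the paper's intended argument: the paper's own proof of Theorem \ref{center} is nothing more than a pointer to the functors $\K$ and $\C$, the isomorphisms $\alpha_A$ (Lemma \ref{alpha}) and $\rho_T$ (Theorem \ref{rept} combined with Lemmas \ref{ce2} and \ref{ce3}), and the two naturality identities recorded just before the theorem --- which is precisely the skeleton you assemble. Your check that $\rho_T$ preserves the center (via $\ce\ra 0=1$ from Lemma \ref{ce1} and $0\ra\ce=1$, so $\ce\,\co\,0$ and $\rho_T(\ce)$ is the center of $\K(\C(T))$) is a detail the paper leaves implicit, and it is correct.

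One sentence, however, needs repair: ``since the objects of $\SNAc$ satisfy $\CK$'' is false if $\SNAc$ is read literally as the variety of all centered subresiduated Nelson algebras, which is how the paper defines the symbol in Section \ref{s6}. The algebra of Figure \ref{fig:Hasse} is a centered subresiduated Nelson algebra that fails $\CK$; by the representation theorem of that section such an algebra is not isomorphic to any $\K(A)$, so $\K$ is not essentially surjective onto the whole variety, and no equivalence via these functors can exist at that level of generality. The theorem must therefore be read as the section's introduction (and its footnote) announces it: the equivalence is between $\SRL$ and the \emph{full subcategory} of $\SNAc$ whose objects satisfy $\CK$ (equivalently, by Lemma \ref{ce3}, condition $\CC$). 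With that reading your argument is complete and coincides with the paper's; you even correctly isolate surjectivity of $\rho_T$, routed through $\CK\Leftrightarrow\CC\Leftrightarrow$ surjectivity (Lemmas \ref{ce3} and \ref{ce2}), as the one place where the structural hypothesis is consumed --- but that hypothesis is a restriction defining the target category, not a property enjoyed by every member of the variety $\SNAc$, and your write-up should state it as such.
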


\section{Conclusions and two open problems}

In this paper we extended, in the framework of sr-lattices, the well known twist construction given for Heyting algebras.
In order to make it possible, we introduced and studied the variety $\SNA$
by showing that every subresiduated Nelson algebra can be represented as a twist structure of an sr-lattice. 
We also characterized the congruences of subresiduated Nelson algebras and we applied this result in order to obtain
some additional properties for these algebras. In particular, we described simple and subdirectly
irreducible algebras, we proved that $\SNA$ has EDPC and CEP, we presented an equational
base for the variety of $\SNA$ generated by the class of its totally ordered members and finally we proved
that there exists a categorical equivalence between $\SRL$ and $\SNAc$.

We finish this paper by considering two (open) problems concerning the matter of this paper. 
 
\subsection*{Problem 1: Generalize the term equivalence between Nelson algebras and Nelson lattices}

We assume the reader is familiar with commutative residuated
lattices \cite{Ts}. An involutive residuated lattice is a
bounded, integral and commutative residuated lattice $(T,\we,
\vee, \ast,\ra, 0, 1)$ such that for every $x\in T$ it holds that
$\neg \neg x = x$, where $\neg x: = x\ra 0$ and $0$ is the first
element of $T$ \cite{BC}. In an involutive residuated lattice it
holds that $x \ast y = \neg (x \ra \neg y)$ and
$x\ra y = \neg (x \ast \neg y)$.
A Nelson lattice \cite{BC} is an involutive residuated
lattice $(T,\we,\vee, *,\ra,0,1)$ which satisfies the additional
inequality
$(x^2 \ra y)\we ((\neg y)^2 \ra \neg x) \leq x\ra y$,
where $x^2:=x\ast x$. See also \cite{V}.

\begin{remark} \label{br3}
Let $(T,\we, \vee, \Rightarrow,{\sim}, 0,1)$ be a Nelson algebra.
We define on $T$ the binary operations $*$ and $\ra$ by
$x*y:=  \sim (x \Rightarrow {\sim} y) \vee {\sim} (y \Rightarrow {\sim} x)$
and $x \ra y :=  (x \Rightarrow y) \we ({\sim} y\Rightarrow {\sim} x)$.
Then \cite[Theorem 3.1]{BC} says that $(T,\we, \vee, \ra,*, 0,1)$
is a Nelson lattice. Moreover, ${\sim} x = \neg x = x\ra 0$.

Let $(T,\we,\vee,*,\ra,0,1)$ be a Nelson lattice. We define on $T$
a binary operation $\Rightarrow$ and a unary operation $\sim$ by
$x \Rightarrow y:= x^2 \ra y$ and
$\sim x:= \neg x$,
where $x^2 = x*x$. Then Theorem 3.6 of \cite{BC} says that the
$(T,\we, \vee,\Rightarrow,{\sim},0,1)$ is a Nelson
algebra. In \cite[Theorem 3.11]{BC} it was also proved that the
category of Nelson algebras and the category of Nelson lattices
are isomorphic. Taking into account the construction of this
isomorphism in that paper we have that the variety of Nelson
algebras and the variety of Nelson lattices are term equivalent
and the term equivalence is given by the operations we have
defined before.
\end{remark}

\begin{remark} \label{imp}
Let $A$ be a Heyting algebra. Then
$(\K(A),\we,\vee,\Ra,{\sim},\ce,0,1)$ is a centered Nelson algebra. Thus,
it follows from Remark \ref{br3} that $\hat{\K}(A):= (\K(A),\we,\vee,*,\ra,\ce,0,1)$ is a centered Nelson lattice, 
where for $(a,b)$ and $(d,e)$ in $\K(A)$ the operations
$\ast$ and $\ra$ are given by
\begin{eqnarray*}
   (a,b) * (c,d) =  (a\we c, (a\ra d)\we (c\ra b)),\\
   (a,b) \ra (c,d) =  ((a\ra c)\we (d\ra b), a \we d).
\end{eqnarray*}
\end{remark}

The following question naturally arises: 
\begin{itemize}
\item Is it there a variety of algebras, 
in the language of Nelson lattices, which is term equivalent to the variety of subresiduated Nelson lattices?
\end{itemize}
We do not have an answer for this question.
 
In \cite[Corollary 4.18]{CCSM} it was proved that there exists an equivalence between $\SRL$ and an algebraic category
whose objects are in the language of centered Nelson lattices. We write $\KSRL$ for this algebraic category. 
In particular, if $A\in \SRL$ then $\hat{\K}(A)\in \KSRL$,
where the binary operation $\ra$ is defined as in Remark \ref{imp}. Moreover, for every $T\in \KSRL$ there exists
$A\in \SRL$ such that $T$ and $\hat{\K}(A)$ are isomorphic algebras.

The following result follows from Theorem \ref{center} and \cite[Corollary 4.18]{CCSM}.

\begin{proposition} \label{equivalence}
The categories  $\KSRL$ and $\SNAc$ are equivalent. 
\end{proposition}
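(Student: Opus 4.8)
The plan is to obtain the asserted equivalence formally, as a composite of the two equivalences already available to us, relying on the standard facts that equivalences of categories are closed under composition and that every equivalence admits a quasi-inverse which is again an equivalence. In other words, categorical equivalence is a symmetric and transitive relation on categories, and I would exploit exactly this.

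First I would record the two inputs. By Theorem \ref{center}, the functors $\K$ and $\C$ (with $\K(A)$ endowed with its canonical center $(0,0)$) witness an equivalence between $\SRL$ and $\SNAc$; in particular $\C\colon \SNAc \to \SRL$ is an equivalence. On the other hand, \cite[Corollary 4.18]{CCSM} provides an equivalence $\hat{\K}\colon \SRL \to \KSRL$ together with a quasi-inverse, where $\hat{\K}(A)$ is the centered Nelson lattice associated with $A$ as in Remark \ref{imp}.

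Then I would form the composite functor $\hat{\K}\circ \C\colon \SNAc \to \KSRL$. Being a composition of two equivalences, it is itself an equivalence, so its quasi-inverse furnishes an equivalence $\KSRL \to \SNAc$. This establishes that $\KSRL$ and $\SNAc$ are equivalent categories.

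I do not expect any genuine obstacle here, since the argument is purely formal once the two component equivalences are in place. The only point worth checking is that both equivalences are stated relative to the \emph{same} base category $\SRL$ and that the center $(0,0)$ of $\K(A)$ is treated consistently on the two sides; this is the case, because both the Nelson-lattice structure of $\hat{\K}(A)$ and the subresiduated Nelson structure of $\K(A)$ are built on the very same underlying centered Kleene algebra $\K(A)$ with center $(0,0)$ (cf. Remark \ref{br3} and Remark \ref{imp}). Consequently no verification of further natural isomorphisms is needed beyond the coherence already guaranteed by Theorem \ref{center} and \cite[Corollary 4.18]{CCSM}.
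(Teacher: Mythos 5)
Your proof is correct and is essentially identical to the paper's: the authors likewise obtain Proposition \ref{equivalence} by simply composing the equivalence of Theorem \ref{center} between $\SRL$ and $\SNAc$ with the equivalence between $\SRL$ and $\KSRL$ from \cite[Corollary 4.18]{CCSM}, using transitivity and symmetry of categorical equivalence. Your additional remark that both constructions live over the same base category $\SRL$ and the same centered Kleene algebra $\K(A)$ is a sensible sanity check, but no more is needed.
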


The following question also naturally arises: 
\begin{itemize}
\item Is there a variety of algebras, 
in the language of centered Nelson lattices, which is term equivalent to the variety of centered subresiduated Nelson lattices?
\end{itemize}
We do not have an answer for this question. However, we know that the usual construction (the one used to show the term equivalence between  Nelson algebras and Nelson lattices) does not work by considering centered subresiduated Nelson algebras and the objects of the algebraic category
$\KSRL$. In order to show it, assume that the construction works,
which is equivalent to say that this works for the centered subresiduated Nelson algebra $(\K(A),\we,\vee,\Ra,{\sim},\ce,0,1)$
and the algebra $\hat{\K}(A)$ where $A$ is an arbitrary sr-lattice.
Then, for every $A\in \SRL$ and $a, b, c, d\in A$
such that $a\we b = c\we d = 0$ the equality $(a,b) \Ra (c,d) = (a,b)^2 \ra (c,d)$ is satisfied,
i.e., the inequality $a\ra c \leq d \ra (a\ra b)$ is satisfied.
Consider the Boolean algebra of four elements, where $a$ and $b$ are the atoms, and $D = \{0,a,1\}$.
We have that $(A,D)$, or directly $A$, is an sr-lattice. Define $c=a$ and $d=b$. We have that
$a\ra c = 1$ and $d\ra (a\ra b) = d\ra 0 = a$, so $a\ra c \nleq d \ra (a\ra b)$, which is a contradiction.

\subsection*{Problem 2: Generalize the equivalence between $\NA$ and a
category of enriched Heyting algebras}

We know that every Nelson algebra can be represented as a special twist structure of a Heyting
algebra. This correspondence was formulated as a categorical equivalence (by Sendlewski
in the early 1990's, see \cite{S2} and also in \cite{Vig}) between Nelson algebras and a category of enriched Heyting algebras,
which made it possible to transfer a number of fundamental results from the more widely
studied theory of intuitionistic algebras to the realm of Nelson algebras. 

The objects of the category of enriched Heyting algebras above mentioned are pairs $(A,R)$, where $A$ is a Heyting
algebra and $R$ is a Boolean congruence of $A$ (i.e., $R$ is a congruence such that $A/R$ is a Boolean algebra). 
Congruences of any Heyting algebra can be represented by filters, and filters corresponding to Boolean congruences 
are precisely those containing all dense elements, i.e., elements a such that $\neg a = 0$ \cite{RS}. This
allows to replace the notion of a Boolean congruence by the notion of a filter
containing dense elements, which will be called Boolean filter. 
Thus, we may consider pairs $(A,F)$ where $A$ is a Heyting algebra and $F$ is a Boolean filter.
The categorical equivalence for $\NA$ can be presented as follows. We define $\Hs$ as the category whose objects are pairs $(A,F)$, where $A$ is a Heyting algebra and $F$ is a Boolean filter, and whose morphisms
$f:(A,F)\ra (B,G)$ are homomorphisms such that $f(F)\subseteq G$. If $(A,F) \in \Hs$ then 
\[
\K(A,F): = \{(a,b)\in A\times A: a\we b = 0\;\text{and}\;a\vee b \in F\}
\]
is a Nelson algebra with the operations mentioned in Section \ref{s2}. If $f:(A,F)\ra (B,G)$ is a
morphism in $\Hs$ then $\K(f):\K(A,F) \ra \K(B,G)$ defined by $\K(f)(a,b):= (f(a),f(b))$ is a morphism in $\NA$.
Moreover, $\K$ can be extended to a functor from $\Hs$ to $\NA$.
Conversely, if $T\in \NA$ then $\C(T):= (T/\theta, T^{+}/\theta) \in \Hs$, where $T^+:= \{x\in T:x\geq \sim x\}$. 
If $f:T\ra U$ is a morphism in $\NA$ then $\C(f):\C(T)\ra \C(U)$ given by $\C(f)(x/\theta): = f(x)/\theta$ is a morphism in $\Hs$.
Moreover, $\C$ can be extended to a functor from $\NA$ to $\Hs$. Furthermore, if $(A,F) \in \Hs$ then
$\alpha:(A,F) \ra \C((\K(A,F))$, given as in Lemma \ref{alpha}, is an isomorphism in $\Hs$, and if $T\in \NA$
then $\rho:T \ra \K(\C(T))$, given as in Theorem \ref{rept}, is an isomorphism in $\NA$. The functors $\K$ and $\C$ establish
a categorical equivalence between $\Hs$ and $\NA$ \cite{V,Vig}. Therefore, it is natural to think in
the following problem:
\begin{itemize}
\item Is it possible to generalize the above mentioned categorical equivalence in the framework of $\SNA$?
\end{itemize}

Let $A\in \SRL$. One might think that a possible solution is to consider filters of an algebra $A\in \SRL$ that contain the set of dense elements $\De(A):=\{a\in A: \neg a=0\}$ (as in the case of Heyting algebras). It is known that in Heyting algebras
the set of dense elements is a filter. However, in sr-lattices it is not necessarily 
satisfied. Indeed, let $A$ be the sr-lattice given in Example \ref{ex1}.
Then $\De(A) = \{a,b,1\}$, which is not a filter. In sr-lattices the congruences are given by open filters, i.e., filters $F$ such that $1\ra a\in F$
whenever $a\in F$ \cite{EH,CJ}. 
In order to try to generalize the equivalence between $\Hs$ and $\NA$, it seems natural to work with pairs
$(A,F)$, where $A$ is a sr-lattice and $F$ is an open filter such that $\De(A)\subseteq F$
(these filters will be called subresiduated filters).

\begin{lemma} \label{dense}
Let $A\in \SRL$. Then $\De(A)=\{a\vee\neg a : a\in A\}$.
\end{lemma}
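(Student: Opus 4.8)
The plan is to prove the two inclusions separately, observing that one of them is essentially immediate. For the inclusion $\De(A) \subseteq \{a \vee \neg a : a \in A\}$, I would take any $x \in \De(A)$; by definition $\neg x = 0$, so $x = x \vee 0 = x \vee \neg x$, which exhibits $x$ as an element of the right-hand set (with witness $a = x$). This requires nothing beyond the definition of a dense element and the fact that $0$ is the bottom of $A$.

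The reverse inclusion $\{a \vee \neg a : a \in A\} \subseteq \De(A)$ is the substantive part. Here I would fix $a \in A$ and compute $\neg(a \vee \neg a)$, aiming to show it equals $0$. Using the sr-lattice identity 1), namely $(x \vee y)\ra z = (x \ra z)\we(y \ra z)$, with $z = 0$, gives $\neg(a \vee \neg a) = (a \vee \neg a)\ra 0 = (a \ra 0)\we(\neg a \ra 0) = \neg a \we \neg\neg a$. It then remains only to verify that $\neg a \we \neg\neg a = 0$.

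The one point to check carefully, though it is short, is this last identity. I would obtain it from the sr-lattice property 5), $x \we (x \ra y) \leq y$: instantiating $x := \neg a$ and $y := 0$ yields $\neg a \we (\neg a \ra 0) \leq 0$, i.e. $\neg a \we \neg\neg a = 0$. Combining this with the computation above gives $\neg(a \vee \neg a) = 0$, so $a \vee \neg a \in \De(A)$, which completes the nontrivial inclusion and hence the proof. No genuine obstacle arises here; the only care needed is to route the argument through the defining residuation conditions 1) and 5) rather than through Heyting-style identities, since $A$ need not be a Heyting algebra and $\De(A)$ need not even be a filter (as the paper has just noted via Example \ref{ex1}).
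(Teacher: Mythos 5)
Your proof is correct and follows essentially the same route as the paper: the easy inclusion via $\neg x = 0 \Rightarrow x = x \vee \neg x$, and the converse by computing $\neg(a \vee \neg a) = \neg a \we \neg\neg a = 0$. The only difference is that you explicitly cite the sr-lattice axioms 1) and 5) where the paper leaves these two equalities unjustified, which is a welcome precision rather than a deviation.
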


\begin{proof}
Let $a\in \De(A)$. Then $\neg a=0$, so $a = a\vee\neg a$.
Hence $a\in \{b\vee\neg b : b\in A\}$.
Conversely, let $a\in A$. We shall see that $\neg (a\vee \neg a)=0$. 
Indeed, $\neg (a\vee\neg a)= \neg a \we \neg \neg a = 0$. Hence, $a \vee \neg a\in \De(A)$.
\end{proof}

We write $\SRLs$ for the set whose elements are pairs $(A,F)$, with $A\in \SRL$ and
$F$ a subresiduated filter.
\vspace{1pt}

Let $(A,F)\in \SRLs$. We define 
\[
\K(A,F): = \{(a,b)\in A\times A: a\we b = 0\;\text{and}\;a\vee b \in F\}.
\]
Note that $\K(A,A) = \K(A)$.
Let $(a,b), (c,d) \in \K(A,F)$. Then, we have that $(a,b)\we (c,d), (a,b)\vee (c,d)$ and $\sim (a,b) \in \K(A,F)$
(for details see for instance \cite{Vig}).

\begin{lemma}\label{lem-eq0}
Let $(A,F)\in \SRLs$ and $(a,b), (c,d)\in \K(A,F)$.
Then $(a,b)\Rightarrow (c,d)\in \K(A,F)$.
\end{lemma}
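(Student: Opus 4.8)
The plan is to unwind the definition. Since $(a,b)\Ra (c,d)=(a\ra c,\,a\we d)$, membership in $\K(A,F)$ amounts to two facts: $(a\ra c)\we(a\we d)=0$ and $(a\ra c)\vee(a\we d)\in F$. The first is essentially free: every element of $\K(A,F)$ satisfies $a\we b=0$, so $\K(A,F)\subseteq \K(A)$, and it was already observed before Definition \ref{generalised} that $(a\ra c)\we a\we d\le c\we d=0$. Hence the entire content of the lemma is to prove $(a\ra c)\vee(a\we d)\in F$.

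First I would pass to the quotient. Because $F$ is an \emph{open} filter of the sr-lattice $A$, it is the $1$-class of a congruence; let $\pi\colon A\to A/F$ be the associated quotient homomorphism in $\SRL$, so that $\pi$ preserves $\we,\vee,\ra,0,1$ and $\pi(x)=1$ precisely when $x\in F$. It then suffices to show $\pi\bigl((a\ra c)\vee(a\we d)\bigr)=1$, i.e. $(\pi a\ra \pi c)\vee(\pi a\we \pi d)=1$ in $A/F$.

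The key step, and the real work, is to identify $A/F$ as a Boolean algebra whose implication is the classical one. For every $x\in A$ one has $\neg x=x\ra 0\in D$, so $\pi(\neg x)=\pi(x)\ra 0$ lies in the designated set of $A/F$ and satisfies $\pi(x)\we \pi(\neg x)=0$; moreover the hypothesis $\De(A)\subseteq F$ together with Lemma \ref{dense} gives $x\vee\neg x\in F$, whence $\pi(x)\vee\pi(\neg x)=1$. Thus every element of $A/F$ has a complement, so $A/F$ is a complemented distributive lattice, i.e. a Boolean lattice; and since these complements already lie in the designated set of $A/F$ and, by surjectivity of $\pi$, exhaust $A/F$, the designated set of $A/F$ is all of $A/F$. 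Hence $A/F$ is a Heyting algebra, and a complemented Heyting algebra is Boolean with $\ra$ equal to classical implication.

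With this in hand the computation is routine. From $c\we d=0$ and $c\vee d\in F$ we get $\pi c\we \pi d=0$ and $\pi c\vee\pi d=1$, so $\pi d=\neg\pi c$ in the Boolean algebra $A/F$; therefore
\[
(\pi a\ra \pi c)\vee(\pi a\we \pi d)=(\neg\pi a\vee\pi c)\vee(\pi a\we\neg\pi c)=1,
\]
a one-line Boolean identity. This gives $(a\ra c)\vee(a\we d)\in F$, finishing the proof. I expect the main obstacle to be precisely the third paragraph: one must recognise that $\De(A)\subseteq F$ forces the quotient to be not merely complemented but a genuine Boolean algebra with classical implication (in particular that its designated set is everything). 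Without this the sr-implication of $A/F$ need not satisfy $\pi c\le \pi a\ra \pi c$, and the target identity can genuinely fail, so this structural observation is the crux rather than a formality.
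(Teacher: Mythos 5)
Your proof is correct, but it takes a genuinely different route from the paper's. The paper never leaves the algebra $A$: it splits $(a\ra c)\vee(a\we d)$ by distributivity into $((a\ra c)\vee a)\we((a\ra c)\vee d)$ and shows each conjunct lies in $F$ separately --- the first because $\neg a\vee a\leq (a\ra c)\vee a$ and $\neg a\vee a\in\De(A)\subseteq F$, the second via the transitivity axiom $(1\ra(c\vee d))\we((c\vee d)\ra c)\leq 1\ra c$, the computation $(c\vee d)\ra c=d\ra c=d\ra 0=\neg d$ (which uses $c\we d=0$), openness of $F$ (to get $1\ra(c\vee d)\in F$ from $c\vee d\in F$), and finally $(1\ra c)\vee d\leq(a\ra c)\vee d$. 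You instead pass to the quotient $A/F$, using the correspondence between open filters and congruences of sr-lattices that the paper cites from Epstein--Horn and Celani--Jansana, and prove the structural fact that $\De(A)\subseteq F$ forces $A/F$ to be a Boolean algebra whose sr-implication is classical (your argument for this --- complements $\pi(\neg x)$ exist, lie in the designated sublattice, and by uniqueness of complements in a distributive lattice exhaust the quotient --- is sound, if terse); the lemma then collapses to the Boolean tautology $(\neg p\vee q)\vee(p\we\neg q)=1$. What each buys: the paper's argument is elementary and self-contained, needing only filter manipulations and the sr-lattice axioms, whereas yours is conceptually more illuminating, since it makes explicit at the level of sr-lattices the fact the paper recalls only for Heyting algebras --- that filters containing all dense elements are exactly those with Boolean quotient --- which is the real reason the twist implication stays inside $\K(A,F)$. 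The one ingredient you rely on that the paper uses but does not prove is the open-filter/congruence correspondence (in particular that $\pi(x)=1$ exactly when $x\in F$); since the paper invokes this same correspondence, with citations, in the very section where the lemma appears, this is a legitimate dependency rather than a gap.
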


\begin{proof}
Let $(a,b), (c,d)\in \K(A,F)$. We will prove that $(a \ra c, a\we d)\in\K(A,F)$. First, note that since $(a \ra c, a\we d)\in\K(A)$, $(a \ra c)\we a\we d=0$. We will see that $(a \ra c)\vee (a\we d)\in F$. By distributive law, $(a \ra c)\vee (a\we d)=((a\ra c)\vee a)\we ((a\ra c)\vee d)$. It is immediate to see that $\neg a \leq a \ra c$ and in consequence $\neg a\vee a\leq (a\ra c)\vee a$. Since $\neg a\vee a\in \De(A)\subseteq F$, we get $(a\ra c)\vee a\in F$. We only need to check that $(a\ra c)\vee d\in F$. To do so, we know that $(1\ra (c\vee d))\we ((c\vee d)\ra c)\leq 1\ra c$. Since, $ (c\vee d)\ra c=(c \ra c)\we (d\ra c)=d \ra c$, and $d \ra c= (d\ra c)\we (d \ra d)= d \ra (c\we d)=d \ra 0$, we get $(1\ra (c\vee d))\we \neg d\leq 1\ra c$. Then,
$((1\ra (c\vee d))\we \neg d)\vee d\leq (1\ra c)\vee d$, i.e.,
\[
((1\ra (c\vee d))\vee d)\we (\neg d\vee d) \leq (1\ra c)\vee d.
\]
Since $c\vee d\in F$ and $F$ is an open filter, it follows that $1\ra (c\vee d)\in F$ and thus, $(1\ra (c\vee d))\vee d\in F$. From $\neg d\vee d\in F$, we get $(1\ra c)\vee d\in F$. It is easy to see that $(1\ra c)\vee d\leq (a\ra c)\vee d$, therefore we get $(a\ra c)\vee d\in F$, which was our aim.
\end{proof}

Thus, for $(A,F)\in \SRLs$ we have that $\langle \K(A,F),\we,\vee,\Ra,\s,(0,1),(1,0) \rangle$ is a subresiduated Nelson algebra because this is a subalgebra of the subresiduated Nelson algebra $\langle \K(A),\we,\vee,\Ra,\s,(0,1),(1,0) \rangle$. However, this construction can not be extended to a categorical equivalence, unless following the construction which shows the categorical equivalence for the category of enriched Heyting algebras we have mentioned in this section. Indeed, let us consider $T$ as the subresiduated Nelson algebra whose Hasse diagram is given in Figure \ref{fig:Hasse}. Then $T/\theta$ is isomorphic to the sr-lattice $A$ given in the Example \ref{ex1}. Note that $\De(A)=\{a,b,1\}$. So, the only open filter $F$ that contains the set $\De(T/\theta)$ is $F=T/\theta $ and we know that $T$ is not isomorphic to $\K(T/\theta,T/\theta) = \K(T/\theta)$.

\subsection*{Acknowledgments}

This work was supported by Consejo Nacional de Investigaciones Cient\'ificas y T\'ecnicas
(PIP 11220170100195CO and PIP 11220200100912CO, CONICET-Argentina),
Universidad Nacional de La Plata (11X/921) and Agencia Nacional de Promoci\'on Cient\'ifica y
Tecnol\'ogica (PICT2019-2019-00882, ANPCyT-Argentina) and the National Science Center (Poland), grant number \linebreak 2020/39/B/HS1/00216, ``Logico-philosophical foundations of geometry and \linebreak topology'' and the MOSAIC project. This last project has received funding from the European Union’s Horizon 2020 research and innovation programme under the Marie Skłodowska-Curie grant agreement No 101007627.


\newpage

-----------------------------------------------------------------------------------
\\
Noem\'i Lubomirsky,\\
Centro de Matem\'atica de La Plata (CMaLP), \\
Facultad de Ciencias Exactas (UNLP), \\
and CONICET.\\
Casilla de correos 172,\\
La Plata (1900), Argentina.\\
nlubomirsky@mate.unlp.edu.ar

------------------------------------------------------------------------------------
\\
Paula Menchón,\\Department of Logic,\\
Faculty of Philosophy and Social Sciences,\\
Nicolaus Copernicus University in
Toruń, Poland, \\
Orcid: 0000-0002-9395-107X\\
Email address: paula.menchon@v.umk.pl

------------------------------------------------------------------------------------
\\
Hern\'an Javier San Mart\'in,\\
Centro de Matem\'atica de La Plata (CMaLP), \\
Facultad de Ciencias Exactas (UNLP), \\
and CONICET.\\
Casilla de correos 172,\\
La Plata (1900), Argentina.\\
hsanmartin@mate.unlp.edu.ar

\end{document}